\providecommand{\keywords}[1]{\textbf{Keywords} #1 \\[12px]}
\providecommand{\subclass}[1]{\textbf{Mathematics Subject Classification (2020)} #1 }
\newtheorem{theorem}{Theorem}[section]
\newtheorem{lemma}[theorem]{Lemma}
\newtheorem{corollary}[theorem]{Corollary}
\newtheorem{proposition}[theorem]{Proposition}
\theoremstyle{definition}
\newtheorem{definition}[theorem]{Definition}
\newtheorem{remark}[theorem]{Remark}
\newtheorem{openp}[theorem]{Open Problem}
\DeclareMathOperator{\wt}{wt} 
\DeclareMathOperator{\im}{Im}
\DeclareMathOperator{\mult}{mult}
\DeclareMathOperator{\Tr}{Tr}
\DeclareMathOperator{\NL}{\mathcal{NL}}
\newcommand{\graph}[1]{\mathcal{G}_{#1}}
\newcommand{\Z}{\mathbb{Z}}
\newcommand{\F}{\mathbb{F}}
\newcommand{\set}[1]{\left\{ #1 \right\}}
\newcommand{\parens}[1]{ \left( #1 \right)}
\newcommand{\ceil}[1]{ \left\lceil #1 \right\rceil }
\newcommand{\bentcomps}[1]{{\mathcal{B}({#1})}}
\begin{document}

\title{On lower bounds for the distances between APN functions}

\author{Maria Mihaila$^1$, Darrion Thornburgh$^{1}$\thanks{Corresponding Author}}
\date{$^1$Vanderbilt University 
\\ \texttt{maria.mihaila@vanderbilt.edu, darrion.thornburgh@vanderbilt.edu}\\[2ex]%
}

\maketitle
\begin{abstract}
Whether two distinct APN functions can have a Hamming distance of $1$ remains an open problem. In 2020, L. Budaghyan et al. introduced a new CCZ-invariant $\Pi_F$ which can be used to provide lower bounds on the Hamming distance between a given APN function $F \colon \mathbb{F}_2^n \to \mathbb{F}_2^n$ and other APN functions. Lower bounds on the distance from an APN function $F$ to any other APN function $G$ are known when $F$ is an almost bent (AB) function or when $F$ is a $3$-to-$1$ quadratic function with $n$ even. 
In this paper, we reinterpret $\Pi_F$ in terms of the multiplicities of the 3-sums of the graph $\mathcal{G}_F=\{(x, F(x)) : x \in \mathbb{F}_2^n\}$ of $F$ as a Sidon set, which we call exclude multiplicities.
For even $n$, we establish lower bounds on the distance between $F$ and any other APN function $G$ when $F$ is plateaued APN, and we generalize a previously known lower bound for quadratic $3$-to-$1$ functions to the case where $F$ is plateaued $3$-to-$1$ (e.g., when $F$ is a Kasami function).
For odd $n$, we derive new lower bounds when $F$ is the APN inverse function over $\mathbb{F}_{2^n}$.
We also study how the exclude multiplicities of $\mathcal{G}_F$ are directly connected to the existence of linear structures of $\gamma_F$ when $F$ is plateaued APN and to the ortho-derivative when $F$ is a quadratic APN function. 
In particular, we prove that $\gamma_F$ has no nontrivial linear structures when $F$ is plateaued APN.
We also use the CCZ-invariance of exclude multiplicities to prove that the Brinkmann-Leander-Edel-Pott function is not CCZ-equivalent to a plateaued function.
\end{abstract}

\keywords{Almost perfect nonlinear (APN) functions, Sidon sets, vectorial Boolean functions, Hamming distance}

\subclass{Primary: 94D10, 11B13. \\ Secondary: 94A60.}

\section{Introduction}

We call a function $F \colon \F_2^n \to \F_2^m$ a \textit{vectorial Boolean function} or an \textit{$(n,m)$-function}, where $\F_2^n$ is an $n$-dimensional vector space over $\F_2$.
Vectorial functions are prevalent and are a major focus of study in cryptography, and in particular, they are used as the primary nonlinear components of block ciphers. 
In this paper, we are concerned with the $(n,n)$-functions that are optimally resistant to a differential attack, the attack on a block cipher first introduced by Biham and Shamir in \cite{BihamShamir1991}.
Such functions are called \textit{almost perfect nonlinear} (APN).
In particular, we call $F$ APN if $F(x)+F(x+a)=b$ has either $0$ or $2$ solutions for all $a,b \in \F_2^n$ where $a \neq 0$.

An active area of research is the construction of new APN functions.
In particular, it is conjectured that changing the value of an APN function at a single point results in a function that is no longer APN \cite{budaghyanCarletHellesetUpperBoundsDegree}.
For two vectorial Boolean functions $F,G \colon \F_2^n \to \F_2^n$, we define the \textit{Hamming distance} between $F$ and $G$ as 
\[
d(F,G) = |\set{x \in \F_2^n : F(x) \neq G(x)}|.
\]
Said differently, it is conjectured that any two distinct APN functions have Hamming distance greater than $1$ (for $n \geq 3$), and it is known that this conjecture holds in certain cases, such as when $F$ is a plateaued APN function and $G$ is an arbitrary APN function.
Moreover, if two APN functions have a distance of $1$, at least one of them has algebraic degree $n$ (see \cite{carlet_apnGraphMaximal}), and it is also conjectured that no APN function has algebraic degree $n$ \cite{budaghyanCarletHellesetUpperBoundsDegree}.

In \cite{DistanceBetweenAPNFunctions2020}, the Carlet-Charpin-Zinoviev (CCZ) equivalence invariant $\Pi_F$ was first introduced (see Section~\ref{sec:distance-between-apn-functions} for the definition of $\Pi_F$).
It was shown that $\Pi_F$ can be used to determine a lower bound on the Hamming distance between distinct APN functions $F$ and $G$.
From this, the authors of \cite{DistanceBetweenAPNFunctions2020} derived lower bounds on the distance between any Gold function $F(x) = x^3$ and any other APN function, and they observed that their lower bounds all tend to infinity as $n$ grows \cite{DistanceBetweenAPNFunctions2020}.
The lower bounds on the Hamming distance between the APN function $F(x)=x^3$ and any other APN function found in \cite{DistanceBetweenAPNFunctions2020} were generalized to the cases where $F$ is an almost bent (AB) function with $n$ odd \cite{CoulterKaleyski2021}, and where $F$ is a quadratic $3$-to-$1$ function with $n$ even \cite{BudaghyanTriplicateFunctions}.
Besides AB functions and quadratic $3$-to-$1$ functions, no other strong lower bounds on distances between APN functions were known until this paper.

In this paper, we have three main results. 
The first is a new lower bound on the distance between two APN functions $F$ and $G$ when at least one of them is plateaued, and our new bound significantly improves on the previous lower bound of $d(F,G) \geq 2$.
We then generalize the result of \cite{BudaghyanTriplicateFunctions} from quadratic $3$-to-$1$ functions to all plateaued $3$-to-$1$ functions (which are necessarily APN).
Note that these first two results are stated for all even $n \geq 4$ because any plateaued APN function is AB when $n$ is odd, and we address this case later in the paper.
The third main result concerns the APN inverse function and provides the first known lower bounds in terms of binary Kloosterman sums $K_n(a) = \sum_{x \in \F_{2^n}}(-1)^{\Tr(ax + x^{-1})}$.

\begin{theorem}\label{thm:plat-dist-bound}
    Suppose $n\geq 4$ is even, let $F,G \colon \F_2^n \to \F_2^n$ be distinct APN functions such that $F$ is plateaued.
    Then $d(F,G) \geq 2^{\frac{n}{2}-1}$.
\end{theorem}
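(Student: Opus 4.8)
My plan is to argue entirely inside the Sidon-set reformulation: since $F$ is APN, $\graph F$ is a Sidon set in $\F_2^n\times\F_2^n$, and if $G$ is APN with $d(F,G)=t$ then $\graph G$ is obtained from $\graph F$ by deleting the $t$ points $(x,F(x))$ and inserting the $t$ points $(x,G(x))$ for $x$ in $U:=\{x:F(x)\neq G(x)\}$. For $(a,b)\notin\graph F$ let $\mathbf{e}_F(a,b)$ denote its exclude multiplicity, i.e.\ the number of $3$-element subsets of $\graph F$ summing to $(a,b)$. The first step is a counting lemma: fix $x_0\in U$ and a $3$-subset $S=\{p,q,r\}\subseteq\graph F$ with $p+q+r=(x_0,G(x_0))$. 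Because $\graph G$ is Sidon, $S$ must contain a deleted point $(x_j,F(x_j))$; moreover $x_j\neq x_0$, since otherwise the remaining two points of $S$ would share a first coordinate and hence coincide, contradicting $|S|=3$. As $\graph F$ is Sidon there is at most one such $S$ for each $x_j\in U\setminus\{x_0\}$, so $\mathbf{e}_F(x_0,G(x_0))\leq t-1$, which gives
\[
d(F,G)\;\geq\;1+\min_{(a,b)\notin\graph F}\mathbf{e}_F(a,b).
\]

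The second step evaluates $\mathbf{e}_F$ for plateaued $F$ by a Walsh computation. Writing $\mu_v$ for the amplitude of the component $x\mapsto\langle v,F(x)\rangle$ and using $\widehat{\langle v,F\rangle}(u)^3=\mu_v^2\,\widehat{\langle v,F\rangle}(u)$, character orthogonality yields, for $(a,b)\notin\graph F$ and $c:=b+F(a)\neq 0$, an identity of the form $6\,\mathbf{e}_F(a,b)=2^n+2^{-n}\sum_{v\neq 0}\mu_v^2(-1)^{\langle v,c\rangle}$. Feeding in the fourth-power-moment identity for APN functions, $\sum_{v\neq 0}\mu_v^2=2^{n+1}(2^n-1)$, together with the fact that a bent component has $\mu_v^2=2^n$ while every non-bent plateaued component has $\mu_v^2\geq 2^{n+2}$, this collapses to the inequality
\[
\mathbf{e}_F(a,b)\;\geq\;\bigl|\,T\cap H_c\,\bigr|,
\]
where $T\subseteq\F_2^n\setminus\{0\}$ is the set of non-bent components of $F$ and $H_c:=\{v:\langle v,c\rangle=0\}$ is the corresponding linear hyperplane. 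Combined with the first step, this reduces the theorem to showing $\min_{c\neq 0}|T\cap H_c|\geq 2^{n/2-1}-1$.

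The third step, which I expect to be the real obstacle, is precisely this structural estimate. The key observation is that if $W\subseteq H_c$ is a linear subspace with $W\cap T=\emptyset$, then every nonzero component $\langle v,F\rangle$ with $v\in W$ is bent, so composing $F$ with a linear surjection $\F_2^n\to\F_2^{\dim W}$ dual to $W$ produces a vectorial bent function, and Nyberg's bound forces $\dim W\leq n/2$. Hence $T\cap H_c$ meets every $(\tfrac n2+1)$-dimensional subspace of the $(n-1)$-dimensional space $H_c$, i.e.\ it is a blocking set against $(\tfrac n2+1)$-subspaces; by the Bose--Burton theorem any such blocking set has at least $2^{(n-1)-(n/2+1)+1}-1=2^{n/2-1}-1$ elements. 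Chaining the three steps gives $d(F,G)\geq 1+(2^{n/2-1}-1)=2^{n/2-1}$. The technical heart is really the recognition in the second step that the exclude multiplicity of $\graph F$ at a point measures the number of non-bent components lying in a prescribed hyperplane; once this is in place, the nonexistence of high-rank vectorial bent functions (Nyberg) together with Bose--Burton does the rest, and the remaining bookkeeping — the Walsh identity and the counting lemma — is routine.
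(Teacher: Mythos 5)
Your proposal is correct and follows essentially the same route as the paper: the counting lemma in your first step is the paper's Lemma~\ref{lem:MinimumDistance-By-MinimumExcludeMult} (phrased via an injection from triples to deleted points rather than choosing representatives, but the same argument), your Walsh identity and bent/non-bent decomposition reproduce \Cref{prop:plateaued-exclude-multiplicity}, \Cref{cor:mult-hyperplane}, and the computation in \Cref{thm:plat-exclude-bound}, and your Nyberg-plus-Bose--Burton blocking-set estimate is exactly \Cref{lem:nontrivial-nonbent-components} via \Cref{cor:set-intersecting-subspaces}. No gaps; the chained bound $d(F,G)\geq 1+(2^{n/2-1}-1)$ matches the paper's proof.
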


\begin{theorem}\label{thm:distance-to-plat3to1}
    Suppose $n \geq 4$ is even, let $F,G \colon \F_2^n \to \F_2^n$ be distinct APN functions such that $F$ is plateaued and $3$-to-$1$.
    Then 
    \[
        d(F,G) \geq 
        \begin{cases}
            \frac{1}{3}(2^{n-1} - 2^{\frac{n}{2}}+2) & n \equiv 0 \mod 4, \\
            \frac{1}{3}(2^{n-1} - 2^{\frac{n}{2}-1}+2) & n \equiv 2 \mod 4.
        \end{cases}
    \]
\end{theorem}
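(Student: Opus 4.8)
The plan is to run the same argument as for \Cref{thm:plat-dist-bound}, but to pin down the minimum exclude multiplicity of $\graph F$ exactly rather than crudely. Recall the combinatorial principle behind the distance bound: if $d(F,G)=k$, then $\graph G$ is obtained from $\graph F$ by deleting a set $B$ of $k$ points and inserting a set $C$ of $k$ points, where $B$ and $C$ project onto the same $k$-element set of first coordinates. Since $\graph G$ is again a Sidon set, no $c\in C$ is a sum of three distinct elements of $\graph F\setminus B$; and because the point of $B$ sharing a column with $c$ lies in no triple of $\graph F$ summing to $c$, deleting $B$ destroys at most $k-1$ of the triples through $c$. Hence $\mult_{\graph F}(c)\le k-1$ for every $c\in C$, and therefore
\[
d(F,G)\ \ge\ 1+\min_{z\notin\graph F}\mult_{\graph F}(z).
\]
So it suffices to show $\min_{z\notin\graph F}\mult_{\graph F}(z)\ge\tfrac13(2^{n-1}-2^{n/2}-1)$ when $n\equiv 0\bmod 4$ and $\ge\tfrac13(2^{n-1}-2^{n/2-1}-1)$ when $n\equiv 2\bmod 4$.

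The next step is to use the description of exclude multiplicities of a plateaued APN function (the reinterpretation of $\Pi_F$ developed in the paper). For $z=(u,v)$ with $v\ne F(u)$, the multiplicity $\mult_{\graph F}(z)$ depends only on $w:=v+F(u)$ and equals $\tfrac16\bigl(2^n-1+3\,\widehat{\mathbb 1_{\mathcal N}}(w)\bigr)$, where $\mathcal N=\{b\ne 0 : F_b\text{ is not bent}\}$ (so $|\mathcal N|=(2^n-1)/3$ for plateaued APN $F$ when $n$ is even) and $\widehat{\mathbb 1_{\mathcal N}}$ is the Walsh transform of its indicator; equivalently $6\cdot 2^n\,\mult_{\graph F}(z)=N_F(w)$ with $N_F(w)=\sum_{a,b}|\{x:D_aD_bF(x)=w\}|=|\{(x_1,x_2,x_3):D_{x_1+x_2}D_{x_2+x_3}F(x_2)=w\}|$. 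Either way, \Cref{thm:distance-to-plat3to1} reduces to the estimate
\[
\widehat{\mathbb 1_{\mathcal N}}(w)\ \ge\ -\tfrac13\bigl(2^{n/2+1}+1\bigr)\ \ (n\equiv 0\bmod 4),\qquad \widehat{\mathbb 1_{\mathcal N}}(w)\ \ge\ -\tfrac13\bigl(2^{n/2}+1\bigr)\ \ (n\equiv 2\bmod 4),
\]
valid for every $w\ne 0$ — a genuinely stronger requirement than the one needed for \Cref{thm:plat-dist-bound}, which is why the $3$-to-$1$ hypothesis must enter here.

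The heart of the matter, and the main obstacle, is thus an equidistribution statement: for plateaued $3$-to-$1$ APN $F$, the non-bent components $\mathcal N$ meet every hyperplane of $\F_2^n$ within the prescribed amount of $\tfrac12|\mathcal N|$. The route I would take is to identify $\mathcal N$ with $\im F\setminus\{0\}$. Since $F$ is $3$-to-$1$ we have $W_F(0,b)=1+3\,\widehat{\mathbb 1_{\im F\setminus 0}}(b)$, so $W_F(0,b)\equiv 1\pmod 3$ and in particular $W_F(0,b)\ne 0$ for $b\ne 0$; combined with the fact that a plateaued APN function on $\F_2^n$ ($n$ even) has all nonzero component amplitudes in $\{2^{n/2},2^{n/2+1}\}$, this forces $|W_F(0,b)|=2^{n/2}$ for $b\notin\mathcal N$ and $|W_F(0,b)|=2^{n/2+1}$ for $b\in\mathcal N$, from which I would deduce $\mathcal N=\im F\setminus\{0\}$ and hence $\widehat{\mathbb 1_{\mathcal N}}(w)=\tfrac13\bigl(W_F(0,w)-1\bigr)$ for all $w$. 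Finally the congruence $W_F(0,w)\equiv 1\pmod 3$ selects, among $\{\pm 2^{n/2},\pm 2^{n/2+1}\}$, exactly $\{2^{n/2},-2^{n/2+1}\}$ when $n\equiv 0\bmod 4$ and $\{-2^{n/2},2^{n/2+1}\}$ when $n\equiv 2\bmod 4$, which gives precisely the two stated lower bounds for $\widehat{\mathbb 1_{\mathcal N}}(w)$ and is the source of the $n\bmod 4$ dichotomy. (For the Gold function $x^3$ all of this can be verified directly: $\mathcal N$ is the set of nonzero cubes, $N_F(w)=2^{2n}-2^{n+1}+2^{n+1}\operatorname{Re}\!\bigl(\bar\psi(w)J(\psi,\psi)\bigr)$ for the cubic character $\psi$, and $J(\psi,\psi)=(-1)^{n/2-1}2^{n/2}$, so the bound is sharp; in the quadratic case $D_aD_bF$ is the constant $B_F(a,b)$ and one recovers \cite{BudaghyanTriplicateFunctions}.) Once the estimate on $\widehat{\mathbb 1_{\mathcal N}}(w)$ is in hand, substituting back into $\mult_{\graph F}(z)=\tfrac16(2^n-1+3\widehat{\mathbb 1_{\mathcal N}}(w))$ and then into the displayed distance inequality yields the theorem, modulo the arithmetic bookkeeping of the constants (and the observation that for small $n$, e.g.\ $n=4$, the bound degrades to the already-known $d(F,G)\ge 2$).
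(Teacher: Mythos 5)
Your opening reduction --- $d(F,G)\ge e_{\min}(\graph F)+1$ via the disjoint-triples argument --- is exactly \Cref{lem:MinimumDistance-By-MinimumExcludeMult}, and your closing arithmetic (the two target values and the $n\bmod 4$ dichotomy coming from the congruence $W_F(0,b)\equiv 1\pmod 3$) is consistent with the multiplicities $\alpha(n),\beta(n)$ the paper obtains. The gap is in the middle. Your formula $\mult_{\graph F}(z)=\tfrac16\bigl(2^n-1+3\,\widehat{1_{\mathcal N}}(w)\bigr)$ and the subsequent identification $\widehat{1_{\mathcal N}}(w)=\tfrac13\bigl(W_F(0,w)-1\bigr)$ both rest on the assertion that every nonzero component of a plateaued APN function in even dimension has amplitude $2^{n/2}$ or $2^{n/2+1}$. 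That assertion is false in the stated generality: there are quadratic (hence plateaued) APN functions in dimension $8$ with linearity $2^{7}=2^{n-1}$, i.e.\ with a component of amplitude $2^{n/2+3}$ (Beierle--Leander), and the paper itself only ever uses the one-sided inequality $\lambda_v^2\ge 2^{n+2}$ for non-bent components (in the proof of \Cref{thm:plat-exclude-bound}). What is true is that plateaued $3$-to-$1$ functions have the classical spectrum $\set{0,\pm 2^{n/2},\pm 2^{n/2+1}}$ --- but that is precisely the nontrivial input your argument needs, it is where the $3$-to-$1$ hypothesis actually does its work, and you have assumed it rather than proved it. (Your observation that $W_F(0,b)=1+3\,\widehat{1_{\im F\setminus\set{0}}}(b)\equiv 1\pmod 3$ correctly shows $W_F(0,b)\neq 0$ and hence $\lambda_b=|W_F(0,b)|$; it gives no upper bound on $\lambda_b$.) The further step ``from which I would deduce $\mathcal N=\im F\setminus\set{0}$'' is also not justified as written, though it is avoidable: once the amplitudes are pinned down, $\widehat{1_{\mathcal N}}(w)$ can be computed directly from $\sum_{b}(-1)^{b\cdot w}W_F(0,b)=2^n|F^{-1}(w)|$ without identifying $\mathcal N$ with the image.

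For comparison, the paper closes exactly this gap by importing \Cref{thm:plateaued-PDS}: for a plateaued $3$-to-$1$ function, $\im F\setminus\set{0}$ is a partial difference set with parameters that force $\widehat{1_{\im F\setminus\set{0}}}$, and hence $W_F(0,\cdot)$, to be two-valued on nonzero inputs with the values you predict. Combining this with \Cref{cor:PDS-mults} and the uniformity statement \Cref{prop:plateaued-uniformity} yields the exact multiplicities in \Cref{thm:plateaued-3to1-mults}, and then \Cref{lem:MinimumDistance-By-MinimumExcludeMult} finishes the proof. If you either cite that partial-difference-set theorem (equivalently, the classical Walsh spectrum of plateaued $3$-to-$1$ functions) or supply a proof of it, your route goes through and recovers the same constants; as written, the key lemma is asserted via a false general claim rather than established.
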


\begin{theorem}\label{thm:dist-to-Inverse}
    Suppose $n\geq 3$ is odd.
    Let $F \colon \F_{2^n} \to \F_{2^n}$ be the function $F(x) =x^{-1}$, where we define $\frac{1}{0}:=0$.
    If $G \colon \F_{2^n} \to \F_{2^n}$ is an APN function such that $G \neq F$, then
    \begin{align*}
    d(F,G) \geq \ceil{\frac{2^n-5 - \max_{v \in \F_{2^n}^\ast} |K_n(v)-1|}{6}} +1 \approx \frac{2^n-2^{\frac{n}{2}+1}}{6}+1.
    \end{align*}
\end{theorem}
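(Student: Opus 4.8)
The plan is to work with the reinterpretation of $\Pi_F$ in terms of exclude multiplicities of the graph $\graph F$ as a Sidon set, which the paper develops before this theorem. Recall that for a Sidon set $S$ in an abelian group, a point $p$ outside $S$ is "excluded" with multiplicity equal to the number of ways $p$ can be written so that adding it to $S$ would destroy the Sidon property; concretely, for the graph of an APN function this counts certain collinear/coincidence patterns. The key inequality (from \cite{DistanceBetweenAPNFunctions2020}, recast here) is that if $F,G$ are distinct APN functions then $d(F,G)$ is bounded below by a quantity controlled by how small the exclude multiplicities of $\graph F$ can be — roughly, $d(F,G)$ must be large enough to "cover" all the points where $G$ differs from $F$, and each such point contributes according to its exclude multiplicity. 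So the real task is: for $F(x)=x^{-1}$ with $n$ odd, compute (or lower-bound) the minimum exclude multiplicity $m_F := \min_{p \notin \graph F} \mult_F(p)$, and then the distance bound follows by plugging into the general machinery.

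**Reducing to a Kloosterman sum count.** For the inverse function, a point not on the graph is a pair $(a,b)$ with $b \neq a^{-1}$. The exclude multiplicity counts the number of $x \in \F_{2^n}$ (or pairs) for which $\{(a,b)\} \cup \graph F$ fails to be Sidon through that particular $x$; unwinding the definitions, this becomes the number of solutions to an equation of the form $x^{-1} + (x+c)^{-1} = d$ or, after the affine substitutions natural to $F(x)=x^{-1}$, a count that is classically expressible via the number of $\F_{2^n}$-rational points on a curve like $y + y^{-1} = \text{linear}$. The standard trick is that $|\{x : \Tr(ax + x^{-1}) = \epsilon\}|$ is $\tfrac12(2^n + (-1)^\epsilon K_n(a))$, and more refined counts (number of $x$ with prescribed values of both $x^{-1}$-type expressions) reduce to $K_n(a)$ after expanding the indicator functions as characters. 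So I would: (i) write the exclude multiplicity of a generic off-graph point as such a solution count; (ii) expand using additive characters; (iii) collect the main term $2^n/6$ and identify the error term as $\tfrac16$ times something of the form $|K_n(a) - 1|$ maximized over $a \in \F_{2^n}^\ast$; (iv) conclude $m_F \geq \ceil{(2^n - 5 - \max_a|K_n(a)-1|)/6} + 1$. The constants $-5$ and $+1$ and the denominator $6$ should fall out of carefully handling the degenerate cases (e.g. $x=0$, $x=c$, the subvariety where the character sum has fewer terms) and the factor $3$ coming from the inverse function being "close to" 3-to-1-like behavior in its difference structure — actually $x \mapsto x^{-1}$ has each nonzero second-order difference hit with controlled multiplicity, and the $6 = 2\cdot 3$ reflects symmetry $x \leftrightarrow c+x$ together with a 3-fold count.

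**Assembling the distance bound.** Once $m_F$ is bounded below, the general lemma (established earlier in the paper, à la \cite{DistanceBetweenAPNFunctions2020}) gives $d(F,G) \geq m_F$ — or possibly $d(F,G) \geq$ some simple function of $m_F$; I would invoke exactly that statement. The asymptotic claim $d(F,G) \gtrsim (2^n - 2^{n/2+1})/6 + 1$ then follows from the Weil bound $|K_n(a)| \leq 2^{n/2+1}$ (so $|K_n(a) - 1| \leq 2^{n/2+1} + 1$, giving the $-2^{n/2+1}$ in the approximation after absorbing the $O(1)$ terms). I would state this explicitly as the "$\approx$" in the theorem.

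**Main obstacle.** The hard part will be step (i)–(iii): getting the exclude multiplicity of an arbitrary off-graph point $(a,b)$ written as a clean solution count and then a clean character-sum evaluation, \emph{uniformly} in $(a,b)$, so that the maximum over off-graph points of the multiplicity's deficiency from $2^n/6$ is genuinely controlled by $\max_{a \in \F_{2^n}^\ast} |K_n(a) - 1|$ and nothing worse. In particular I expect the delicate point to be that different off-graph points give character sums with \emph{different} linear coefficients $a$, and some of those coefficients could be $0$ (giving a trivial sum equal to $2^n$, which is fine) while others are nonzero; I must verify that the worst case is exactly a nonzero $a$ with extremal Kloosterman value, and that the small-order correction terms (from $x=0$, from collisions $x = c+x$ when $c=0$ — excluded since $a \neq 0$ in the difference, etc.) only shift things by the explicit additive constants $-5$ and $+1$ rather than by something growing. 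Handling these degenerate loci correctly, and confirming the $\ceil{\cdot}$ is the right rounding, is where the bookkeeping will concentrate.
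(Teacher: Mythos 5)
Your high-level strategy matches the paper's: reduce to a lower bound on $e_{\min}(\graph F)$ via \Cref{lem:MinimumDistance-By-MinimumExcludeMult}, identify the multiplicity of an off-graph point $(a,b)$ with (one sixth of) the number of solutions of $x+y+z=a$, $x^{-1}+y^{-1}+z^{-1}=b$, and use \Cref{prop:power-functions-uniform} and \Cref{prop:APN-power-mults-0a-a0} to normalize to $a=1$, $b\notin\F_2$. The degenerate-locus worries you flag are real and the paper does spend most of its effort on exactly that bookkeeping (the contributions $3\delta_F(1,b)$ from $x=0$, $y=0$, $x+y=1$, the three points at infinity, and the three $\F_2$-rational affine points of the cleared-denominator cubic, which together produce the constant $-5$).

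However, there is a genuine gap at the heart of your steps (ii)--(iii). The quantity you must evaluate is the number of affine points on the two-variable cubic $q(x,y)=bx^2y+bxy^2+x^2+y^2+(b+1)xy+x+y=0$ obtained by clearing denominators in $x^{-1}+y^{-1}+(x+y+1)^{-1}=b$. Expanding the indicator of this condition with additive characters gives a two-dimensional exponential sum $\sum_{x,y}(-1)^{\Tr(t(x^{-1}+y^{-1}+(x+y+1)^{-1}))}$ in which the three inverse terms share variables, so it does \emph{not} factor into a product or a single Kloosterman sum; your proposed reduction to $\max_{a}|K_n(a)-1|$ does not go through by character expansion alone. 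What actually closes the argument in the paper is geometric: one checks that the projective closure of $q=0$ is a \emph{smooth ordinary elliptic} curve (genus $1$, nonzero coefficient of $xyz$), and then invokes Lachaud--Wolfmann (\Cref{prop:ordinary-elliptic-isomorphic,prop:LachaudWolfmann-rational-points}): every ordinary elliptic curve over $\F_{2^n}$ is isomorphic to a Kloosterman curve and hence has exactly $2^n+1\pm K_n^\ast(u)$ rational points for some $u\in\F_{2^n}^\ast$. Without this classification you would at best obtain a Weil-type bound $|\#\widetilde{\mathcal{E}}-(2^n+1)|\le 2^{\frac{n}{2}+1}$, which recovers the asymptotic $\approx\frac{2^n-2^{\frac{n}{2}+1}}{6}+1$ but not the stated exact bound in terms of $\max_{a\in\F_{2^n}^\ast}|K_n(a)-1|$ (which is what makes \Cref{table:inverse-mults} nearly tight). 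A secondary slip: the exclude multiplicity is a count over \emph{triples}, so the relevant equation is the two-variable one above, not the one-variable $x^{-1}+(x+c)^{-1}=d$ you write down, and the $+1$ in the final bound comes from \Cref{lem:MinimumDistance-By-MinimumExcludeMult}, not from the multiplicity estimate itself.
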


Theorems \ref{thm:plat-dist-bound}, \ref{thm:distance-to-plat3to1}, and \ref{thm:dist-to-Inverse} are proven as immediate consequences of Theorems \ref{thm:plat-exclude-bound}, \ref{thm:plateaued-3to1-mults}, and \ref{thm:inverse-minmult}, respectively.

A \textit{Sidon set} in $\F_2^n$ is a subset $S \subseteq \F_2^n$ such that no four distinct points in $S$ have zero sum. 
APN functions and Sidon sets are closely related as the graph $\graph F = \set{(x, F(x)) : x \in \F_2^n}$ of a function $F \colon \F_2^n \to \F_2^n$ is a Sidon set in $(\F_2^n)^2$ if and only if $F$ is APN (see, for instance, \cite[Section 11.3.2]{CarletBook}).
We recall the following definitions, which were first introduced in \cite{quadspaper}.
\begin{definition}
For a Sidon set $S \subseteq \F_2^n$ and a point $p \in \F_2^n \setminus S$, we say that the \textit{exclude multiplicity} $\mult_S(p)$ of $p$ is 
\[
\mult_S(p) = |\set{\set{x,y,z} \subseteq S : x+y+z =p}|.
\]
If $\mult_S(p) >0$, we call $p$ an \textit{exclude point} of $S$. 
\end{definition}

Indeed, for a Sidon set $S \subseteq \F_2^n$, the exclude multiplicity $\mult_S(p)$ of some point $p \in \F_2^n \setminus S$ is equal to the number of 2-dimensional affine subspaces in $S \cup \set{p}$, which are necessarily not contained in a Sidon set by definition.
We call a Sidon set $S$ \textit{maximal} if it cannot be contained in a strictly larger Sidon set. Equivalently, any point in $\F_2^n \setminus S$ has nonzero exclude multiplicity with respect to $S$.
Carlet demonstrated in \cite{carlet_apnGraphMaximal} that the graph of any APN function is a maximal Sidon set if and only if any two distinct APN functions have a Hamming distance strictly greater than $1$.

All of our main theorems rely on lower bounds for the minimum exclude multiplicity of a point with respect to the graph of an APN function $F$, namely $e_{\min}(\graph F) = \min_{(a,b) \in (\F_2^n)^2 \setminus \graph F} \mult_{\graph F}(a,b)$. 
This is because we will derive the inequality
\[
d(F,G) \geq e_{\min}(\graph F)+1,
\]
which holds for any APN function $G \neq F$.
Moreover, we will show that this lower bound is equivalent to the one obtained in \cite{DistanceBetweenAPNFunctions2020} via the CCZ-invariant $\Pi_F$.
The known lower bounds on the distances for different families of APN functions are summarized in Table~\ref{tab:main-results}.

\renewcommand{\arraystretch}{2}
\begin{table}[ht!]
    \centering
    \begin{tabular}{p{3cm}|c|c}
    $F \colon \F_{2^n} \to \F_{2^n} $ & Lower bounds on $d(F,G)$ for $G \neq F$ and $G$ APN & Reference \\
    \hline
        $F$ almost bent (AB) & $\frac{1}{3}(2^{n-1}+2)$ & \cite{CoulterKaleyski2021},   
        Theorem~\ref{thm:dist-to-AB-functions}  \\
        $F$ plateaued APN, $n\geq 4$ even & $2^{\frac{n}{2}-1}$ & Theorem~\ref{thm:plat-dist-bound} \\
         $F$ plateaued $3$-to-$1$, $n\geq 4$ even &
         $
        \begin{cases}
            \frac{1}{3}(2^{n-1} - 2^{\frac{n}{2}}+2) & n \equiv 0 \mod 4, \\
            \frac{1}{3}(2^{n-1} - 2^{\frac{n}{2}-1}+2) & n \equiv 2 \mod 4.
        \end{cases}
        $
        & Theorem~\ref{thm:distance-to-plat3to1}  \\
        $F(x)=x^{-1}$, $n\geq3$ odd & 
        $\ceil{\frac{1}{6} \parens{2^n-5 - \max_{a \in \F_{2^n}^\ast} |K_n(a)-1|}} +1$
        & Theorem~\ref{thm:dist-to-Inverse} 
    \end{tabular}
    \caption{For an APN function $F$ in a given family, its Hamming distance to any other APN function is greater than or equal to the value in the second column.}
    \label{tab:main-results}
\end{table}
\renewcommand{\arraystretch}{1}

The remainder of this paper is organized as follows. 
In Section~\ref{sec:prelim}, we provide the necessary background for this paper. 
In Section~\ref{sec:distance-between-apn-functions}, we recall results of \cite{DistanceBetweenAPNFunctions2020} and provide a new description of $\Pi_F$ which is equivalent to its original definition.

In Section~\ref{sec:plateaued}, we study the exclude multiplicities of the graphs of plateaued APN functions, and we prove Theorems~\ref{thm:plat-dist-bound} and \ref{thm:distance-to-plat3to1}. 
We express the exclude multiplicities of $\graph F$ in terms of a sum involving the amplitudes of the components of $F$ (see the preliminaries).
Moreover, we generalize \cite[Proposition 5]{DistanceBetweenAPNFunctions2020} to all plateaued APN functions, showing that $\Pi_F$ is completely determined by $\Pi_F^0$ (see Section~\ref{sec:distance-between-apn-functions}).
As a corollary, we prove that the Brinkmann-Leander-Edel-Pott function (the only known instance of an APN function not CCZ-equivalent to a power function or a quadratic function) is also not CCZ-equivalent to a plateaued function.
We also prove that the exclude multiplicities of $\graph F$ are odd when $F$ is plateaued APN ($n \geq 3$) via a characterization in terms of the intersections of $\bentcomps{F}=\set{v \in \F_2^n : v \cdot F \text{ is bent}}$ with affine hyperplanes.
In Section~\ref{subsec:quadratic-functions}, we study quadratic APN functions with a focus on how their ortho-derivatives can be used to express exclude multiplicities.

In Section~\ref{sec:power-fcns}, we study the exclude multiplicities of points with respect to the graphs of APN power functions.
As a consequence of a result in Section~\ref{sec:power-fcns}, we resolve, in the case of $n$ odd, a conjecture posed by Kaleyski in 2019.
Then, in Section~\ref{sec:inverse}, we prove Theorem~\ref{thm:dist-to-Inverse}.
We use results of Lachaud and Wolfmann in \cite{LachaudWolfmannGoppa} on elliptic curves over $\F_{2^n}$ and their connection to binary Kloosterman sums in order to derive our lower bounds in the case of $F(x)=x^{-1}$ when $n$ is odd.

In Section~\ref{sec:gammaF-linearstr}, we study how the exclude multiplicities of $\graph F$ are related to the $\gamma_F$ Boolean function, where $F$ is an APN function (see Section~\ref{sec:prelim} for the definition of $\gamma_F$).
In 2021, Carlet asked whether $\gamma_F$ can have nontrivial linear structures when $F$ is an arbitrary APN function \cite{GammaCarlet}.  
We resolve this problem in the case where $F$ is plateaued APN by proving that this is impossible.
To conclude the paper, we list open problems in Section~\ref{sec:open-ques}.

\section{Preliminaries}\label{sec:prelim}

For a Boolean function $f\colon \F_2^n \to \F_2$, we say that its (Hamming) \textit{weight} is the size of its support, i.e. $\wt(f) = \sum_{x \in \F_2^n} f(x)$.
We say that $f$ is \textit{balanced} if $\wt(f) = 2^{n-1}$.
We also define the \textit{linearity} $\mathcal{L}(f)$ of $f$ as the maximum absolute value of its \textit{Walsh transform} $W_f \colon \F_2^n \to \Z$ defined by 
\[
W_f(u) = \sum_{x \in \F_2^n} (-1)^{f(x) + x \cdot u}
\]
for all $u \in \F_2^n$.
Hence, $\mathcal{L}(f) = \max_{u \in \F_2^n}|W_f(u)|$, and moreover, the \textit{nonlinearity} of $f$ is $\NL(f) = 2^{n-1} -\frac{1}{2}\mathcal{L}(f)$.
A function is \textit{bent} if and only if its linearity is minimal with $\mathcal{L}(f) = 2^{\frac{n}{2}}$, and so bent functions only exist when $n$ is even.
Equivalently, $f$ is bent if and only if $D_a f(x) = f(x) + f(x+a)$ is balanced for all nonzero $a \in \F_2^n$.
Also, a Boolean function $f$ is \textit{quadratic} if $D_af$ is an affine function for all $a \in \F_2^n$.
We say that $f$ is \textit{plateaued} if there exists a non-negative integer $\lambda$ such that $W_f(u) \in \set{0, \pm \lambda}$ for all $u \in \F_2^n$, and we call $\lambda$ the \textit{amplitude} of $f$.
It is well-known that all quadratic and bent Boolean functions are plateaued.

For an $(n,m)$-function $F$, we call $D_a F(x) = F(x)+F(x+a)$ the \textit{derivative of $F$ in the direction of $a\in\F_2^n$}.
By definition, APN functions are those $(n,n)$-functions $F$ such that $D_a F$ has image size $2^{n-1}$ for all nonzero $a \in \F_2^n$.
For a vectorial Boolean function $F \colon \F_2^n \to \F_2^n$, the \textit{Walsh transform} of $F$ is defined as the function $W_F \colon (\F_2^n)^2 \to \Z$ defined by 
\[
W_F(u,v)= \sum_{x \in \F_2^n} (-1)^{u \cdot x+ v \cdot F(x)}
\]
for all $(u,v)\in(\F_2^n)^2$, where ``$\cdot$'' is the standard inner product on $\F_2^n$.
Note that $W_F(u,v)$ is equal to the Walsh transform of the \textit{component function} $(v \cdot F)(x) = v \cdot F(x)$ of $F$ evaluated at $u$.
We denote the set of all bent component functions of $F$ as $\bentcomps{F}$.

It is a classical result of \cite{chabaud_vaudenay_1995} that $F$ is APN if and only if $\sum_{(u,v) \in (\F_2^n)^2}W_F^4(u,v) = 2^{2n}(3 \cdot 2^{2n} -2^{n+1})$.
We say that the \textit{linearity} of $F$ is the value $\mathcal{L}(F) = \max_{(u,v) \in (\F_2^n)^2 \setminus (0,0)} |W_F(u,v)|$, and the \textit{nonlinearity} of $F$ is $\NL(F) = 2^{n-1} - \frac{1}{2}\mathcal{L}(F)$.
Oftentimes, we identify $\F_2^n$ with $\F_{2^n}$, in which case, we define $u \cdot v$ as $\Tr(uv)$ where $\Tr$ is the \textit{absolute trace function} $\Tr(x)=\sum_{i=0}^{n-1}x^{2^i}$ from $\F_{2^n}$ onto $\F_2$.
The \textit{Fourier-Hadamard transform} of a function $\varphi \colon \F_2^n \to \Z$ is the function $\widehat{\varphi} \colon \F_2^n \to \Z$ defined by 
\[
\widehat{\varphi}(u) = \sum_{x \in \F_2^n}(-1)^{x \cdot u} \varphi(x)
\]
for all $u \in \F_2^n$.
The Walsh transform of $F$ can also be expressed as the Fourier-Hadamard transform of the indicator $1_{\graph F}$ of $\graph F$ because we have 
\[
W_F(u,v) = \sum_{x \in \F_2^n }(-1)^{u \cdot x + v \cdot F(x)} = \sum_{(x,y) \in (\F_2^n)^2}(-1)^{u \cdot x + v \cdot y} 1_{\graph F}(x,y)=\widehat{1_{\graph F}}(u,v).
\]
For any integer $k \geq 1$, we have the useful equality 
\begin{equation}\label{eq:WFk-set-size}
    \widehat{W_F^k}(a,b) = 2^{2n}\left|\set{(x_1, \dots, x_k) \in (\F_2^n)^k : \sum_{i=1}^k (x_i, F(x_i)) = (a,b)}\right|,
\end{equation}
which follows directly from the definition of the Fourier-Hadamard transform.

For any two functions $\varphi,\psi \colon \F_2^n \to \Z$, define the \textit{convolutional product} of $\varphi$ and $\psi$ to be the function $\varphi \otimes \psi \colon \F_2^n \to \Z$ such that $(\varphi \otimes \psi)(u)=\sum_{x \in \F_2^n}\varphi(x) \psi(x+u)$ for all $u \in \F_2^n$.
The \textit{convolution law} is the equality $\widehat{\varphi \otimes \psi} = \widehat{\varphi} \widehat{\psi}$ for any functions $\varphi, \psi \colon \F_2^n \to \Z$ (see \cite[Proposition 11]{CarletBook}).

Another important class of vectorial Boolean functions are those such that $W_F(u,v) \in \set{0, \pm 2^{\frac{n+1}{2}}}$ for all $(u,v) \in (\F_2^n)^2 \setminus \set{(0,0)}$.
If $F \colon \F_2^n \to \F_2^n$ has this property, then we call $F$ \textit{almost bent} (AB).
Note that AB functions only exist when $n$ is odd as the Walsh transform always takes integer values, and any AB function is APN (see \cite{chabaud_vaudenay_1995} or \cite{vandamflass}, for instance).

We call two vectorial Boolean functions $F, G \colon \F_2^n \to \F_2^n$ \textit{CCZ-equivalent} if there exists an affine permutation $A \colon (\F_2^n)^2 \to (\F_2^n)^2$ such that $A(\graph F) = \graph G$.
It is conjectured that all APN functions of the form $F(x)=x^d$, defined over $\F_{2^n}$, are known up to CCZ-equivalence \cite{Dobbertin1999Niho}.
In Table~\ref{tab:APN-inf-families} and Table~\ref{tab:ABinf-families}, we list all known families of APN and AB monomials over $\F_{2^n}$.

\begin{table}[ht!]
    \centering
    \begin{tabular}{c|c|c|c}
        Name & $d$ & Condition & Reference(s) \\
        \hline
        Gold & $2^k + 1$ & $\gcd(k,n) = 1$ & 
        \cite{GoldR1968,NybergBook1994} 
        \\
        Kasami & $2^{2k} - 2^k + 1$ & $\gcd(k,n) = 1$  & 
        \cite{JanwaWilsonKasami1993,Kasami1971}
        \\
        Welch & $2^t + 3$ & $n = 2t + 1$ &   
        \cite{Dobbertin1999Welch} 
        \\ 
        Niho & 
        $\begin{cases}
            2^t + 2^{\frac{t}{2}} -1 & \text{if $t$ even} \\
            2^t + 2^{\frac{3t+1}{2}} -1 & \text{if $t$ odd}
        \end{cases}$
        & $n = 2t+1$ & 
        \cite{Dobbertin1999Niho} 
        \\ 
        Inverse & $2^{2t} -1$ & $n = 2t+1$ &   
        \cite{NybergBook1994,BethDing1994Inverse}
        \\
        Dobbertin & $2^{4t} + 2^{3t} + 2^{2t} + 2^t - 1$ & $n = 5t$ &   
        \cite{Dobbertin2001Dobbertin}
        \\
    \end{tabular}
    \caption{Known infinite families of APN power functions $\F_{2^n} \to \F_{2^n}$ of the form $x \mapsto x^d$.}
    \label{tab:APN-inf-families}
\end{table}

\begin{table}[ht!]
    \centering
    \begin{tabular}{c|c|c|c}
        Name & $d$ & Condition & Reference(s) \\
        \hline
        Gold & $2^k + 1$ & $\gcd(k,n) = 1$ & 
        \cite{GoldR1968,NybergBook1994}
        \\
        Kasami & $2^{2k} - 2^k + 1$ & $\gcd(k,n) = 1$  & 
        \cite{Kasami1971}
        \\
        Welch & $2^t + 3$ & $n = 2t + 1$ &   
        \cite{WeightDivisCCD2001,BinarymSequencesCCD2001}
        \\
        Niho & 
        $\begin{cases}
            2^t + 2^{\frac{t}{2}} -1 & \text{if $t$ even} \\
            2^t + 2^{\frac{3t+1}{2}} -1 & \text{if $t$ odd}
        \end{cases}$
        & $n = 2t+1$ & 
        \cite{HollmannXiang2001Niho}
    \end{tabular}
    \caption{Known infinite families of AB power functions $\F_{2^n} \to \F_{2^n}$ of the form $x \mapsto x^d$, $n$ odd.}
    \label{tab:ABinf-families}
\end{table}

Also of importance are plateaued and quadratic vectorial functions, which are not necessarily APN.
We call a function $F \colon \F_2^n \to \F_2^n$ \textit{plateaued} (resp., \textit{quadratic}) if all of its component functions $v \cdot F$ are plateaued (resp., quadratic).
Equivalently, $F$ is plateaued if for all $v \in \F_2^n$, there exists an integer $\lambda_v \geq 0$ such that $W_F(u,v) \in \set{0, \pm \lambda_v}$ for all $u \in \F_2^n$.
For example, any AB function is plateaued by definition.
Almost all known families of APN functions only contain functions that are CCZ-equivalent to monomials or quadratic functions. 

For a set $S \subseteq \F_2^n$, define $\delta_S \colon \F_2^n \to \Z_{\geq 0}$ to be the function given by $\delta_S(a) = |S \cap (a+S)|$ for all $a \in \F_2^n$.
Also, define the Boolean function $\gamma_S \colon \F_2^n \to \F_2$ such that $\gamma_S(a) = 1$ if $a \neq 0$ and $\delta_S(a)>0$ and $\gamma_S(a)=0$ otherwise.
When $n$ is even and $S = \graph F$ for some vectorial Boolean function $F \colon \F_2^{n/2} \to \F_2^{n/2}$, we simply denote $\delta_S$ as $\delta_F$ and $\gamma_S$ as $\gamma_F$.
The functions $\gamma_F$ and $\delta_F$ associated to a vectorial function $F$ were first introduced in \cite{carletCharpinZinovievCodesBentDES}.

Note that a set $S \subseteq \F_2^n$ is Sidon if and only if $\delta_S(a) \leq 2$ for all $a \in \F_2^n\setminus \set{0}$. 
Since $\delta_S$ cannot take value $1$, we have $\delta_S(a) \in\set{0,2}$ for all $a \in \F_2^n \setminus \set{0}$ precisely when $S$ is Sidon.
Therefore, we have that $\delta_S = 2\gamma_S + |S|\delta_0$ if and only if $S$ is Sidon, where $\delta_0$ denotes the indicator function of $\set{0}$ ($\delta_0$ is called the \textit{Dirac delta function} and it is not to be confused with $\delta_S$ or $\delta_F$).
In the particular case that we have a vectorial function $F \colon \F_2^n \to \F_2^n$, the value that $\delta_F(a,b)$ takes is equal to the number of solutions $F(x)+F(x+a)=b$.
A classical fact proved in \cite{chabaud_vaudenay_1995} is that $\delta_F = 1_{\graph F} \otimes 1_{\graph F}$ (indeed, this holds more generally with $\delta_S = 1_S \otimes 1_S$).

\section{Distance between APN functions}\label{sec:distance-between-apn-functions}

In this section, we establish the following lemma in two ways: first we prove it using the notion of exclude multiplicities, and then we demonstrate that the notation and methods of \cite{DistanceBetweenAPNFunctions2020} can also be used to derive the same bound.

\begin{lemma}\label{lem:MinimumDistance-By-MinimumExcludeMult}
    Let $F,G \colon \F_2^n \to \F_2^n$ be APN functions such that $F \neq G$.
    Then 
    \[
    d(F, G) \geq e_{\min}(\graph F) + 1,
    \]
    where $e_{\min}(\graph F)$ denotes $\min_{(a,b) \in (\F_2^n)^2 \setminus \graph F}\mult_{\graph F}(a,b)$.
\end{lemma}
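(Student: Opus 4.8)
The plan is to prove the lemma directly from the definition of exclude multiplicity: given $G$, I will exhibit one explicit point of $(\F_2^n)^2 \setminus \graph F$ whose exclude multiplicity with respect to $\graph F$ is at most $d(F,G)-1$, which immediately forces $e_{\min}(\graph F) \leq d(F,G)-1$, i.e. $d(F,G) \geq e_{\min}(\graph F)+1$.

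Set $k = d(F,G)$ and $U = \set{x \in \F_2^n : F(x) \neq G(x)}$, so $|U| = k \geq 1$ since $F \neq G$. Fix any $x_0 \in U$ and let $q = (x_0, G(x_0))$. Because $G(x_0) \neq F(x_0)$, we have $q \notin \graph F$, so $\mult_{\graph F}(q)$ is defined and $e_{\min}(\graph F) \leq \mult_{\graph F}(q)$; the task is to show $\mult_{\graph F}(q) \leq k-1$. Consider any triple $\set{a,b,c} \subseteq \graph F$ with $a+b+c = q$, written $a = (a_1,F(a_1))$, $b = (b_1,F(b_1))$, $c = (c_1,F(c_1))$, so that $a_1+b_1+c_1 = x_0$ and $F(a_1)+F(b_1)+F(c_1) = G(x_0)$. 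First I would observe that none of $a_1,b_1,c_1$ equals $x_0$: if, say, $a_1 = x_0$ then $b_1 = c_1$, forcing $b=c$, contradicting that the triple has three distinct elements (distinctness is automatic here since $q \notin \graph F$). The key step is then to show that at least one of $a_1,b_1,c_1$ lies in $U$: if instead all three lay outside $U$, then $F(a_1)=G(a_1)$, $F(b_1)=G(b_1)$, $F(c_1)=G(c_1)$, so $a,b,c \in \graph G$; together with $q = (x_0,G(x_0)) \in \graph G$ this gives four points of $\graph G$ — pairwise distinct, since $q \notin \graph F$ while $a,b,c \in \graph F$ — summing to zero, contradicting that $\graph G$ is a Sidon set (which holds because $G$ is APN). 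Hence every such triple contains an element $(u,F(u))$ with $u \in U \setminus \set{x_0}$.

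Next I would show each triple summing to $q$ is determined by any one of its elements of the form $(u,F(u))$: the remaining two points form an unordered pair $\set{b,c} \subseteq \graph F$ with $b+c = q+(u,F(u))$, a nonzero value since $q \notin \graph F$, and the Sidon property of $\graph F$ (namely $\delta_{\graph F} \leq 2$) gives at most one such pair. Therefore choosing, for each triple summing to $q$, one of its elements in $\set{(u,F(u)) : u \in U \setminus \set{x_0}}$ defines an injection from the set of such triples into a set of size $k-1$, whence $\mult_{\graph F}(q) \leq k-1$ and $e_{\min}(\graph F) \leq d(F,G)-1$. For the promised second proof I would instead recover the same bound inside the framework of \cite{DistanceBetweenAPNFunctions2020}, translating the argument above through the relation $\Pi_F = \set{2^n} \cup \set{3\,\mult_{\graph F}(a,b) : (a,b) \in (\F_2^n)^2 \setminus \graph F}$ and the inequality $d(F,G) \geq \ceil{\min\Pi_F/3}+1$, using that $3\,e_{\min}(\graph F) \leq 2^n-1 < 2^n$ so that the minimum of $\Pi_F$ is realized by an exclude multiplicity term.

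The main obstacle is the Sidon argument applied to $\graph G$ in the second paragraph — recognizing that a nonzero-sum configuration $a+b+c = q$ inside $\graph F$ all of whose source points avoid $U$ would survive intact in $\graph G$ and violate its Sidon-ness; once that is in place, the rest is a short counting argument built on the $\delta_{\graph F} \leq 2$ characterization of Sidon sets recalled in \Cref{sec:prelim}.
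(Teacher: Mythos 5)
Your proposal is correct and follows essentially the same route as the paper: both fix a disagreement point $x_0$, consider the exclude point $(x_0,G(x_0))\notin\graph F$, use the Sidon property of $\graph G$ to extract from each triple a point where $F$ and $G$ disagree, and use the Sidon property of $\graph F$ to ensure distinct triples yield distinct disagreement points (the paper phrases this as the triples being pairwise disjoint, you as injectivity via $\delta_{\graph F}\leq 2$ — the same fact). The only cosmetic difference is that you read the inequality as an upper bound on $\mult_{\graph F}(x_0,G(x_0))$ rather than a lower bound on $d(F,G)$.
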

\begin{proof}
    Suppose $F, G \colon \F_2^n\to \F_2^n$ are distinct APN functions.
    Let $x \in \F_2^n$ such that $F(x) \neq G(x)$, and let $k=\mult_{\graph F}(x, G(x))$.
    If $k =0$, then $d(F,G) \geq 1 = e_{\min}(\graph F)+1$.
    So, assume $k > 0$.
    By definition, there exist distinct (and necessarily pairwise disjoint) triples $\set{(x_i, F(x_i)), (y_i, F(y_i)), (z_i, F(z_i))}_{i=1}^k$ such that $(x_i+y_i+z_i,F(x_i)+F(y_i)+F(z_i))=(x,G(x))$ for all $1 \leq i \leq k$.
    For any $1 \leq i \leq k$, the equation $x_i + y_i + z_i = x$ implies that $x \notin \set{x_i, y_i, z_i}$ as $|\set{x_i, y_i,z_i}|=3$.
    Also, $\graph G$ is Sidon, so $\set{(x_i, F(x_i)), (y_i, F(y_i)), (z_i, F(z_i))}$ is not contained in $\graph G$ for any $1 \leq i \leq k$.
    Hence, there exist $a_1, \dots, a_k$ such that $a_i \in \set{x_i, y_i, z_i}$ for all $1 \leq i \leq k$ and $(a_i, F(a_i)) \notin \graph G$.
    Therefore, $F$ and $G$ differ at $x, a_1, \dots, a_k$, implying $d(F,G) \geq k+1 \geq e_{\min}(\graph F)+1$.
\end{proof}

\begin{remark}
Note that Lemma~\ref{lem:MinimumDistance-By-MinimumExcludeMult} can be generalized to Sidon sets with the same bound except the $+1$ term.
Let $S,T \subseteq \F_2^n$ be distinct Sidon sets.
Let $y \in T$ such that $y \notin S$, and let $k = \mult_S(y)$.
By definition, there exist distinct (and necessarily pairwise disjoint) triples $\set{a_i, b_i, c_i}_{i=1}^k$ such that $a_i, b_i, c_i \in S$ and $a_i+b_i+c_i = y$. 
Note that $\set{a_i, b_i, c_i}$ cannot be a subset of $T$ for any $1 \leq i \leq k$ because this would contradict $T$ being Sidon.
Hence, there are at least $k$ points in $S$ that do not lie in $T$, and so $|S\setminus T| \geq k \geq e_{\min}(S)$.
We cannot improve this bound to also include the $+1$ term as there exist distinct Sidon sets $S,T \subseteq \F_2^n$ such that $|S \setminus T| = e_{\min}(S)$.
For example, taking $S_0$ to be a maximal Sidon set in $\F_2^4$ of size 6, and then taking $S$ and $T$ to be any two distinct extensions of $S_0$ in $\F_2^5$ yields two distinct maximal Sidon sets of $\F_2^5$ of size 7 (for information on the affine equivalence classes of Sidon sets in at most six dimensions, see \cite{quadspaper}).
This implies $S$ and $T$ intersect at six points yet both have minimum exclude multiplicity $1$.
\end{remark}

Let us now recall some notation and definitions that were first introduced in \cite{DistanceBetweenAPNFunctions2020}.
Let $F \colon \F_2^n \to \F_2^n$ be a vectorial Boolean function.
The \textit{shifted derivative $D_a^\beta F$ of $F$ in direction $a$ with shift $\beta$} is the function 
\[
    D_a^\beta F = D_aF(x) +F(a+\beta) = F(x)+F(a+x)+F(a+\beta).
\]
For an $(n,n)$-function $F$, we denote the image of $D_a^\beta F$ as $H_a^\beta F$.
Moreover, we denote by $\Pi_F^\beta(b)$ the set $\Pi_F^\beta(b) = \set{a \in \F_2^n : b \in H_a^\beta F}$.
Also, let 
\[
\Pi_F^\beta = \set{|\Pi_F^\beta(b)| : b \in \F_2^n},
\]
and let 
\[
\Pi_F = \bigcup_{\beta \in \F_2^n} \Pi_F^\beta = \set{|\Pi_F^\beta (b)| : \beta, b \in \F_2^n}.
\]

We now compute the size of $\Pi_F^\beta(b)$ in various ways, and this will be useful in demonstrating the connection between shifted derivatives and exclude multiplicity. 

\begin{proposition}\label{prop:PiF-beta-excludemults}
    Let $F \colon \F_2^n \to \F_2^n$ be an APN function.
    For any $(b, \beta) \in (\F_2^n)^2$, we have the equalities 
    \begin{align*}
    |\Pi_F^\beta(b)| &= \delta_0(b + F(\beta)) + \sum_{a \in \F_2^n} \gamma_F(a+\beta, F(a)+b)\\
        &=\frac{1}{2^{2n+1}}\widehat{W_F^3}(\beta, b)-\delta_0(b+F(\beta))(2^{n-1}-1).
    \end{align*}
    In particular, if $b \neq F(\beta)$, then 
    \[
    |\Pi_F^\beta(b)|  =\sum_{a \in \F_2^n} \gamma_F(a+\beta, F(a)+b) = \frac{1}{2^{2n+1}}\widehat{W_F^3}(\beta, b) =  3\mult_{\graph F}(\beta, b).
    \]
\end{proposition}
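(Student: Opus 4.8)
The plan is to prove the two displayed equalities in turn and then read off the \emph{in particular} clause, using only the definition of $\Pi_F^\beta(b)$, the fact that $W_F = \widehat{1_{\graph F}}$, and elementary Fourier analysis on $\F_2^{2n}$. For the first equality I would write $|\Pi_F^\beta(b)| = \sum_{a\in\F_2^n}\mathbf 1[b\in H_a^\beta F]$ and isolate the term $a=0$: since $D_0^\beta F$ is the constant $F(\beta)$, that term equals $\delta_0(b+F(\beta))$. For $a\neq 0$, reindex $a\mapsto a+\beta$ and use the identity $D_{a+\beta}^\beta F(x)=D_{a+\beta}F(x)+F(a)$, so that $b\in H_{a+\beta}^\beta F$ holds exactly when $D_{a+\beta}F$ attains the value $F(a)+b$. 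Since $\delta_{\graph F}(c,d)=|\{x:D_cF(x)=d\}|$ and $a+\beta\neq 0$, this condition is precisely $\gamma_F(a+\beta,F(a)+b)=1$ (the dropped index $a=\beta$ contributes $\gamma_F(0,\cdot)=0$, and the corner case $(c,d)=(0,0)$ in the definition of $\gamma_F$ never arises). Summing over $a$ gives the $\gamma_F$-expression.

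For the second equality the plan is to evaluate $\widehat{W_F^3}$ first. From $W_F=\widehat{1_{\graph F}}$ and two applications of the convolution identity $\widehat{\varphi\times\psi}=\widehat\varphi\,\widehat\psi$, one obtains $W_F^3=\widehat{1_{\graph F}\times 1_{\graph F}\times 1_{\graph F}}$; Fourier inversion on $\F_2^{2n}$ (namely $\widehat{\widehat\varphi}=2^{2n}\varphi$) then yields $\widehat{W_F^3}(\beta,b)=2^{2n}N$, where $N$ is the number of ordered triples $(x,y,z)\in(\F_2^n)^3$ with $x+y+z=\beta$ and $F(x)+F(y)+F(z)=b$ (equivalently, ordered triples of points of $\graph F$ summing to $(\beta,b)$). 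Independently I would recount $|\Pi_F^\beta(b)|$: because $F$ is APN, for $a\neq 0$ the equation $D_a^\beta F(x)=b$ has $0$ or $2$ solutions, hence $\mathbf 1[b\in H_a^\beta F]=\tfrac12|\{x:D_a^\beta F(x)=b\}|$, whereas $|\{x:D_0^\beta F(x)=b\}|=2^n\delta_0(b+F(\beta))$. The map $(a,x)\mapsto(x,\,y=a+x,\,z=a+\beta)$ is a bijection from $(\F_2^n)^2$ onto the triples with $x+y+z=\beta$, under which $D_a^\beta F(x)=b$ becomes $F(x)+F(y)+F(z)=b$, so $\sum_a|\{x:D_a^\beta F(x)=b\}|=N$. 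Assembling the pieces gives $|\Pi_F^\beta(b)|=\delta_0(b+F(\beta))+\tfrac12\bigl(N-2^n\delta_0(b+F(\beta))\bigr)=\tfrac{1}{2^{2n+1}}\widehat{W_F^3}(\beta,b)-(2^{n-1}-1)\delta_0(b+F(\beta))$, which is the stated formula.

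For the final claim, assume $b\neq F(\beta)$, so $\delta_0(b+F(\beta))=0$ and $(\beta,b)\notin\graph F$. In any triple counted by $N$, if two of $x,y,z$ coincided then the third would equal $\beta$, forcing $F(x)+F(y)+F(z)=F(\beta)=b$, a contradiction; hence every such triple has three distinct entries. Therefore $N=6\,\mult_{\graph F}(\beta,b)$ (unordered $3$-subsets of $\graph F$ summing to $(\beta,b)$ being counted $3!$ times), and the formula above collapses to $|\Pi_F^\beta(b)|=\tfrac12 N=3\,\mult_{\graph F}(\beta,b)$.

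I do not expect any genuinely hard step: the convolution/inversion normalizations and the change of variables are routine, and the combinatorial classification is short. The only place demanding real care is the bookkeeping of the $\delta_0(b+F(\beta))$ contributions, and in particular the observation that when $b=F(\beta)$ \emph{every} nonzero $a$ lies in $\Pi_F^\beta(b)$ (take $x=\beta$ in $D_a^\beta F(x)=b$), so that $|\Pi_F^\beta(b)|=2^n$ there; this is exactly what pins the coefficient $2^{n-1}-1$ and explains the extra element $2^n\in\Pi_F$.
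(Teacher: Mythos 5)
Your proposal is correct and follows essentially the same route as the paper: the first equality via the $\gamma_F$/$\delta_F$ dichotomy with the $a=\beta$ (resp.\ $a=0$) term peeled off as $\delta_0(b+F(\beta))$, the second by identifying $\widehat{W_F^3}(\beta,b)$ with $2^{2n}$ times the number of ordered triples of points of $\graph F$ summing to $(\beta,b)$, and the final clause by observing that such triples must have three distinct entries when $(\beta,b)\notin\graph F$. The only cosmetic difference is that you pass from $|\Pi_F^\beta(b)|$ to that triple count by a direct APN double-count using the bijection $(a,x)\mapsto(x,a+x,a+\beta)$, whereas the paper manipulates the convolution identities $\gamma_F=\tfrac12(\delta_F-2^n\delta_{(0,0)})$ and $\delta_F=1_{\graph F}\times 1_{\graph F}$; both arguments rest on the same facts, and yours is, if anything, slightly more explicit about why the answer is exactly $3\mult_{\graph F}(\beta,b)$.
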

\begin{proof}
    Notice that for any $\beta, b \in \F_2^n$ , we have
    \begin{align*}
       \Pi_F^\beta(b) &=\set{a \in \F_2^n : b \in H_a^\beta F} \\
         &= \set{a \in \F_2^n : b \in \im (D_aF) + F(a+\beta)}  \\
         &= \set{a \in \F_2^n : \delta_F(a,b +F(a+\beta)) \neq 0}.
    \end{align*}
    Therefore the size of $\Pi_F^\beta(b)$ is the same as 
    \[
    \delta_0(b + F(\beta)) + \sum_{a \in \F_2^n} \gamma_F(a,b + F(a + \beta)) 
    =
    \delta_0(b + F(\beta)) + \sum_{a \in \F_2^n} \gamma_F(a+\beta, F(a)+b).
    \]
    Hence $|\Pi_F^\beta(b)| = \delta_0(b + F(\beta)) + \sum_{a \in \F_2^n} \gamma_F(a+\beta, F(a)+b)$.
    Since $F$ is APN, we know that $\gamma_F = \frac{1}{2}\parens{\delta_F - 2^n \delta_{(0,0)}}$, and recall that we also have the equality $\delta_F = 1_{\graph F} \otimes 1_{\graph F}$.
    Therefore
    \begin{align*}
        \sum_{a \in \F_2^n} \gamma_F(a+\beta, F(a)+b) &= (\gamma_F \otimes 1_{\graph F})(\beta, b) \\
        &= \frac{1}{2} ((\delta_F - 2^n\delta_0) \otimes 1_{\graph F})(\beta, b) \\
        &= \frac{1}{2} ((1_{\graph F} \otimes 1_{\graph F}) \otimes 1_{\graph F})(\beta, b) - 2^{n-1}1_{\graph F}(\beta, b)
    \end{align*}
    The convolution law gives $\widehat{\varphi \otimes \psi} = \widehat{\varphi} \widehat{\psi}$ for any functions $\varphi, \psi \colon \F_2^n \to \Z$, and using the fact that $\widehat{1_{\graph F}}= W_F$, the Fourier-Hadamard transform of $(1_{\graph F} \otimes 1_{\graph F}) \otimes 1_{\graph F}$ is $\widehat{(1_{\graph F} \otimes 1_{\graph F})} \widehat{1_{\graph F}} =(\widehat{1_{\graph F}} \widehat{1_{\graph F}})\widehat{1_{\graph F}}= W_F^3$. 
    Therefore, taking the Fourier-Hadamard transform of both sides, we have $(1_{\graph F} \otimes 1_{\graph F}) \otimes 1_{\graph F} = \frac{1}{2^{2n}}\widehat{W_F^3}$, and so
    \begin{align*}
        |\Pi_F^\beta(b)| &= \delta_0(b + F(\beta))  + \sum_{a \in \F_2^n} \gamma_F(a+\beta, F(a)+b) \\
        &= \delta_0(b + F(\beta)) + \frac{1}{2} ((1_{\graph F} \otimes 1_{\graph F}) \otimes 1_{\graph F})(\beta, b) - 2^{n-1}1_{\graph F}(\beta, b) \\
        &=  \frac{1}{2^{2n+1}}\widehat{W_F^3}(\beta, b)-\delta_0(b+F(\beta))(2^{n-1}-1).
    \end{align*}
    In particular, when $(\beta, b) \notin \mathcal{G}_F$, we have 
    \[ 
    |\Pi_F^\beta(b)| = \frac{1}{2^{2n+1}}\widehat{W_F^3}(\beta, b)= 3\mult_{\graph F}(\beta, b)
    \]
    by applying eq.~(\ref{eq:WFk-set-size}).
\end{proof}

Therefore, for an APN function $F \colon \F_2^n \to \F_2^n$, the exclude points of $\graph F$ and their multiplicities are fully described by the sizes of the sets $\Pi_F^\beta(b)$.
Note that the set $\Pi_F^\beta$ describes the exclude multiplicities of points in  $\set{\beta} \times \F_2^n$.
We have the following corollary of Proposition~\ref{prop:PiF-beta-excludemults}.
\begin{corollary}\label{cor:PiFbeta-excludemults}
    Let $F \colon \F_2^n \to \F_2^n$ be an APN function.
    Then 
    \[
    \Pi_F^\beta = \set{2^n} \cup \set{3\mult_{\graph F}(\beta, b) : b \in \F_2^n \setminus \set{F(\beta)}}.
    \]
\end{corollary}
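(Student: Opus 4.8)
The plan is to unpack the definition $\Pi_F^\beta = \set{|\Pi_F^\beta(b)| : b \in \F_2^n}$ and split the index set $\F_2^n$ of values of $b$ into the two cases $b \neq F(\beta)$ and $b = F(\beta)$, which are handled respectively by \Cref{prop:PiF-beta-excludemults} and a one-line direct computation.

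For $b \neq F(\beta)$, the last sentence of \Cref{prop:PiF-beta-excludemults} gives immediately $|\Pi_F^\beta(b)| = 3\mult_{\graph F}(\beta, b)$, so as $b$ ranges over $\F_2^n \setminus \set{F(\beta)}$ these sizes contribute exactly the set $\set{3\mult_{\graph F}(\beta, b) : b \in \F_2^n \setminus \set{F(\beta)}}$ to $\Pi_F^\beta$.

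For $b = F(\beta)$, I would show $\Pi_F^\beta(F(\beta)) = \F_2^n$, so that $|\Pi_F^\beta(F(\beta))| = 2^n$. Recall $\Pi_F^\beta(b) = \set{a \in \F_2^n : b \in H_a^\beta F}$ with $H_a^\beta F = \im D_aF + F(a+\beta)$. For an arbitrary $a \in \F_2^n$, evaluating the shifted derivative at $x = \beta$ gives $D_aF(\beta) + F(a+\beta) = F(\beta) + F(a+\beta) + F(a+\beta) = F(\beta)$, so $F(\beta) \in H_a^\beta F$ and hence $a \in \Pi_F^\beta(F(\beta))$; since $a$ was arbitrary, the claim follows. (Equivalently, one could argue from the first displayed identity in \Cref{prop:PiF-beta-excludemults}: $x = \beta$ solves $D_{a+\beta}F(x) = F(a)+F(\beta)$, so $\gamma_F(a+\beta, F(a)+F(\beta)) = 1$ for every $a \neq \beta$, giving $|\Pi_F^\beta(F(\beta))| = 1 + (2^n-1) = 2^n$.)

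Combining the two cases yields $\Pi_F^\beta = \set{2^n} \cup \set{3\mult_{\graph F}(\beta, b) : b \in \F_2^n \setminus \set{F(\beta)}}$, as claimed. There is no real obstacle here — the statement is a direct corollary of \Cref{prop:PiF-beta-excludemults} — and the only point requiring a moment's attention is the $b = F(\beta)$ case, where one must notice that the shift term $F(a+\beta)$ exactly cancels the corresponding term of $D_aF$ when the derivative is evaluated at $x = \beta$.
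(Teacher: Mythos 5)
Your proposal is correct and matches the paper's intent exactly: the paper states this as an immediate corollary of \Cref{prop:PiF-beta-excludemults} with no written proof, and your two-case split (reading off $|\Pi_F^\beta(b)| = 3\mult_{\graph F}(\beta,b)$ for $b \neq F(\beta)$ from the proposition, and checking directly that $F(\beta) \in H_a^\beta F$ for every $a$ so that $|\Pi_F^\beta(F(\beta))| = 2^n$) is precisely the omitted verification. Both of your arguments for the $b = F(\beta)$ case are sound.
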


If $F$ is APN, then the set $\Pi_F \setminus \set{2^n}$ is the set of all exclude multiplicities of $\graph F$ scaled by $3$.
In \cite{DistanceBetweenAPNFunctions2020}, it was shown that $\Pi_F$ can be used to derive a lower bound on the distance from an APN function $F$ to any other APN function.

\begin{corollary}[\textup{\cite{DistanceBetweenAPNFunctions2020}}]
\label{cor:MinimumDistance-mF}
    Let $F \colon \F_2^n \to \F_2^n$ be an APN function, and let $m_F$ be the number 
    \begin{align*}
        m_F = \min \Pi_F = \min_{b,\beta \in \F_2^n}|\Pi_F^\beta(b)|.
    \end{align*}
    Then for any APN function $G \neq F$ over $\F_2^n$, the Hamming distance between $F$ and $G$ satisfies 
    \[
    d(F,G) \geq \ceil{\frac{m_F}{3}} +1.
    \]
\end{corollary}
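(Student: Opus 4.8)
The plan is to deduce this directly by combining \Cref{lem:MinimumDistance-By-MinimumExcludeMult} with the explicit description of $\Pi_F$ furnished by \Cref{cor:PiFbeta-excludemults}. Indeed, \Cref{lem:MinimumDistance-By-MinimumExcludeMult} already gives $d(F,G) \geq e_{\min}(\graph F) + 1$ for any APN $G \neq F$, so all that remains is to show $\ceil{\frac{m_F}{3}} \leq e_{\min}(\graph F)$, and for this it suffices to establish the one-sided bound $m_F \leq 3\,e_{\min}(\graph F)$.

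First I would take the union over $\beta \in \F_2^n$ of the sets appearing in \Cref{cor:PiFbeta-excludemults} to obtain
\[
\Pi_F = \set{2^n} \cup \set{3\mult_{\graph F}(a,b) : (a,b) \in (\F_2^n)^2 \setminus \graph F},
\]
since as $\beta$ ranges over $\F_2^n$ and $b$ ranges over $\F_2^n \setminus \set{F(\beta)}$, the pair $(\beta,b)$ ranges over exactly $(\F_2^n)^2 \setminus \graph F$. Because $|\graph F| = 2^n < 2^{2n}$, the complement $(\F_2^n)^2 \setminus \graph F$ is nonempty, so $e_{\min}(\graph F)$ is a well-defined nonnegative integer, and evaluating at a point of least exclude multiplicity shows that $3\,e_{\min}(\graph F)$ is one of the elements of $\Pi_F$. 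Hence $m_F = \min \Pi_F \leq 3\,e_{\min}(\graph F)$.

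Finally I would conclude: $\ceil{\frac{m_F}{3}} \leq \ceil{\frac{3\,e_{\min}(\graph F)}{3}} = e_{\min}(\graph F)$, and therefore
\[
d(F,G) \geq e_{\min}(\graph F) + 1 \geq \ceil{\frac{m_F}{3}} + 1,
\]
as claimed.

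The only point that calls for a bit of care is the extra element $2^n \in \Pi_F$: one might worry about whether $2^n$ or $3\,e_{\min}(\graph F)$ is the genuine minimum (equivalently, whether $3\,e_{\min}(\graph F)$ could exceed $2^n$). The argument above deliberately sidesteps this, using only the inequality $m_F \leq 3\,e_{\min}(\graph F)$, which holds in either case. I do not expect a serious obstacle here, as the substantive work is already packaged in \Cref{prop:PiF-beta-excludemults} (which identifies $|\Pi_F^\beta(b)|$ with $3\mult_{\graph F}(\beta,b)$ off the graph) and in \Cref{lem:MinimumDistance-By-MinimumExcludeMult}. As a consistency check one could alternatively reprove the bound in the shifted-derivative language of \cite{DistanceBetweenAPNFunctions2020}, but routing through exclude multiplicities is the cleaner path given what has been set up.
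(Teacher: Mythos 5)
Your proposal is correct and follows essentially the same route as the paper, which likewise derives the bound by combining \Cref{lem:MinimumDistance-By-MinimumExcludeMult} with the identification of $\Pi_F$ in terms of exclude multiplicities. The only (harmless) difference is that the paper invokes the upper bound $\mult_{\graph F}(a,b) \leq \lfloor 2^n/3 \rfloor$ to get the exact equality $m_F = 3e_{\min}(\graph F)$, whereas you need only the one-sided inequality $m_F \leq 3e_{\min}(\graph F)$, which follows directly from $3e_{\min}(\graph F) \in \Pi_F$.
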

For any APN function $F$, any exclude point of $\graph F$ has exclude multiplicity at most $\lfloor \frac{2^n}{3}\rfloor$ (see \cite[Corollary 3.4]{quadspaper}).
Therefore, $m_F = \min\set{3e_{\min}(\graph F), 2^n} = 3e_{\min}(\graph F)$, and so the bound of Corollary~\ref{cor:MinimumDistance-mF} is equivalent to the bound of Lemma~\ref{lem:MinimumDistance-By-MinimumExcludeMult}.

\section{Plateaued APN functions}\label{sec:plateaued}

To provide a brief overview of this section, we study the exclude multiplicities of $\graph F$ when $F$ is a plateaued APN function and consider particular subclasses of plateaued APN functions such as AB functions, plateaued $3$-to-$1$ functions, and quadratic APN functions.

\subsection{AB functions}\label{subsec:AB-functions}

The first class of plateaued APN functions that we consider is the class of almost bent (AB) functions.
A vectorial Boolean function $F \colon \F_2^n \to \F_2^n$ is AB if $W_F(u,v) \in \set{0, \pm 2^{\frac{n+1}{2}}}$ for all nonzero $(u,v) \in (\F_2^n)^2$, and so AB functions only exist when $n$ is odd.
Moreover, all plateaued APN functions are necessarily AB when $n$ is odd.
We can easily derive a lower bound on the distance from any AB function to another APN function using the van Dam and Fon-Der-Flaass characterization of AB functions.
\begin{theorem}[\cite{vandamflass}]\label{thm:AB-vanDamFlaass}
    Let $F \colon \F_2^n \to \F_2^n$ be a function.
    Then $F$ is AB if and only if the system of equations
    \[
        \begin{cases}
            x + y + z = a \\
            F(x) + F(y) + F(z) = b
        \end{cases}
    \]
    has $2^n -2$ or $3 \cdot 2^n - 2$ solutions $(x,y,z) \in (\F_2^n)^3$ for every $(a,b) \in (\F_2^n)^2$.
    If so, then the system has $2^n -2$ solutions if $b \neq F(a)$ and $3 \cdot 2^n - 2$ solutions otherwise.
\end{theorem}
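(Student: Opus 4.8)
The plan is to translate the number of solutions of the system into a character sum and to recognize it as (essentially) the cube of the Walsh transform. Write $N(a,b)$ for the number of triples $(x,y,z)\in(\F_2^n)^3$ solving the system. Expanding the two linear conditions through additive characters of $\F_2^n$ gives $N(a,b)=\frac{1}{2^{2n}}\sum_{u,v}(-1)^{a\cdot u+b\cdot v}W_F(u,v)^3$. Since $W_F(u,0)=2^n\delta_0(u)$ and $W_F(0,0)=2^n$, the terms with $v=0$ contribute exactly $2^n$, so everything reduces to the sum $\frac{1}{2^{2n}}\sum_{v\neq 0}\sum_u(-1)^{a\cdot u+b\cdot v}W_F(u,v)^3$.

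For the forward implication I would assume $F$ is AB, so that $W_F(u,v)^2\in\{0,2^{n+1}\}$ for every $(u,v)$ with $v\neq 0$, hence $W_F(u,v)^3=2^{n+1}W_F(u,v)$ identically on that range. Substituting this and using the two standard identities $\sum_u(-1)^{a\cdot u}W_F(u,v)=2^n(-1)^{v\cdot F(a)}$ and $\sum_{v\neq 0}(-1)^{v\cdot(b+F(a))}=2^n\delta_0(b+F(a))-1$ collapses the sum to $N(a,b)=2^n-2+2^{n+1}\delta_0(b+F(a))$. This is $2^n-2$ when $b\neq F(a)$ and $3\cdot 2^n-2$ when $b=F(a)$, which is exactly the claim, including the case distinction.

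For the converse I would assume $N(a,b)\in\{2^n-2,\,3\cdot 2^n-2\}$ for all $(a,b)$ and first pin down where the large value occurs. Every triple $(x,y,z)$ with $x+y+z=a$ that has two equal coordinates is forced to have its third coordinate equal to $a$ and its $F$-sum equal to $F(a)$; an inclusion-exclusion count of such degenerate triples gives exactly $3\cdot 2^n-2$ of them, so $N(a,F(a))\geq 3\cdot 2^n-2$ and hence $N(a,F(a))=3\cdot 2^n-2$ by maximality. Since $\sum_b N(a,b)=2^{2n}$ for each fixed $a$, the two-valued hypothesis then forces $N(a,b)=2^n-2$ for every $b\neq F(a)$; that is, $N(a,b)=2^n-2+2^{n+1}\delta_0(b+F(a))$ holds unconditionally. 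Fourier-inverting the character-sum formula of the first paragraph yields $W_F(u,v)^3=(2^n-2)2^{2n}\delta_0(u)\delta_0(v)+2^{n+1}W_F(u,v)$; for $(u,v)\neq(0,0)$ this reads $W_F(u,v)\bigl(W_F(u,v)^2-2^{n+1}\bigr)=0$, so $W_F(u,v)\in\{0,\pm 2^{(n+1)/2}\}$ and $F$ is AB (and, as a byproduct, $n$ must be odd, since otherwise $2^{n+1}$ is not a square while Parseval forces $W_F$ to be nonzero somewhere off the origin).

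The character-sum bookkeeping in both directions is routine; the only step that needs a genuinely separate idea is the converse's extraction of the exact formula $N(a,b)=2^n-2+2^{n+1}\delta_0(b+F(a))$ from the bare two-valued hypothesis, which I would handle via the degenerate-triple count together with the averaging identity $\sum_b N(a,b)=2^{2n}$. The one place to be careful is distinguishing the $v=0$ terms from the $(u,v)=(0,0)$ term and keeping the sign conventions of $W_F$ straight, but no serious obstacle is anticipated.
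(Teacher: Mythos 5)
Your proof is correct in both directions: the character-sum identity $N(a,b)=\frac{1}{2^{2n}}\sum_{u,v}(-1)^{a\cdot u+b\cdot v}W_F(u,v)^3$, the collapse $W_F^3=2^{n+1}W_F$ off $v=0$ under the AB hypothesis, and, in the converse, the degenerate-triple count $3\cdot 2^n-2$ combined with the averaging identity $\sum_b N(a,b)=2^{2n}$ followed by Fourier inversion all check out. Note, however, that the paper does not prove this statement at all --- it is imported verbatim from van Dam and Fon-Der-Flaass \cite{vandamflass} --- so there is no internal proof to compare against; your argument is the standard Walsh-transform derivation (as found, e.g., in Carlet's book) rather than the coherent-configuration argument of the original reference, and it is a complete, self-contained substitute.
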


This useful characterization immediately shows that the exclude multiplicity of any point in $(\F_2^n)^2 \setminus \graph F$ is $\frac{2^n-2}{6}$ when $F \colon \F_2^n \to \F_2^n$ is an AB function.
The following result immediately follows and was first stated by Coulter and Kaleyski in \cite{CoulterKaleyski2021}.

\begin{theorem}\label{thm:dist-to-AB-functions}
    Suppose $F \colon \F_2^n \to \F_2^n$ is an AB function, and let $G \colon \F_2^n \to \F_2^n$ be an APN function such that $G \neq F$.
    Then 
    \[
        d(F,G) \geq \frac{2^n-2}{6}+1 = \frac{2^{n-1}+2}{3}.
    \]
\end{theorem}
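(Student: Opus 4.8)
The plan is to combine the van Dam--Fon-Der-Flaass characterization \Cref{thm:AB-vanDamFlaass} with \Cref{lem:MinimumDistance-By-MinimumExcludeMult}; the only real work is translating the ordered-triple count of \Cref{thm:AB-vanDamFlaass} into the exclude multiplicity $\mult_{\graph F}$, which counts unordered triples of \emph{distinct} points.

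First I would fix a point $(a,b) \in (\F_2^n)^2 \setminus \graph F$, so that $b \neq F(a)$, and look at the set of ordered triples $(x,y,z) \in (\F_2^n)^3$ satisfying $x+y+z = a$ and $F(x)+F(y)+F(z) = b$; by \Cref{thm:AB-vanDamFlaass} there are exactly $2^n - 2$ of them. The key observation is that none of these triples has a repeated coordinate: if, say, $x = y$, then $z = a$ and $F(x)+F(y)+F(z) = F(a)$, which cannot equal $b$; the cases $x = z$ and $y = z$ are symmetric, and $x = y = z$ forces $x = a$ and hence $b = F(a)$ as well. So all $2^n - 2$ ordered solutions consist of three pairwise distinct field elements.

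Next I would note that these ordered solutions fall into $\tfrac{2^n-2}{6}$ unordered triples $\set{x,y,z}$, each giving a $3$-element subset $\set{(x,F(x)),(y,F(y)),(z,F(z))}$ of $\graph F$ (a genuine $3$-element set, since the first coordinates differ) whose sum is $(a,b)$, and conversely every such subset of $\graph F$ arises this way. Thus $\mult_{\graph F}(a,b) = \tfrac{2^n-2}{6}$ for \emph{every} $(a,b) \notin \graph F$, and hence $e_{\min}(\graph F) = \tfrac{2^n-2}{6}$ (note $2^n \equiv 2 \bmod 6$ for odd $n$, so this is indeed an integer). Applying \Cref{lem:MinimumDistance-By-MinimumExcludeMult} to $F$ and an APN function $G \neq F$ then yields
\[
d(F,G) \geq e_{\min}(\graph F) + 1 = \frac{2^n-2}{6} + 1 = \frac{2^{n-1}+2}{3},
\]
which is the claim. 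Alternatively, one could phrase this through \Cref{cor:MinimumDistance-mF}, since here $m_F = 3 e_{\min}(\graph F) = \tfrac{2^n-2}{2}$ and $\lceil m_F/3 \rceil + 1 = \tfrac{2^n-2}{6}+1$.

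There is essentially no obstacle: all the substance is packaged in \Cref{thm:AB-vanDamFlaass}, and the only point requiring care is the degeneracy check ensuring that no solution triple has a repeated entry — which holds precisely because $(a,b)$ lies off the graph, i.e.\ $b \neq F(a)$.
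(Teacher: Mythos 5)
Your proposal is correct and follows exactly the paper's route: the paper likewise derives $\mult_{\graph F}(a,b)=\frac{2^n-2}{6}$ for all $(a,b)\notin\graph F$ from \Cref{thm:AB-vanDamFlaass} and then applies \Cref{lem:MinimumDistance-By-MinimumExcludeMult}. The only difference is that you spell out the (correct) degeneracy check and the ordered-to-unordered factor of $6$, which the paper leaves implicit.
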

\begin{proof}
    Apply Lemma~\ref{lem:MinimumDistance-By-MinimumExcludeMult} to Theorem~\ref{thm:AB-vanDamFlaass}.
\end{proof}

\subsection{Properties of the exclude multiplicities of $\graph F$ when $F$ is plateaued APN}\label{subsec:plateaued-properties}

To begin, let us first recall the following characterization of plateaued vectorial functions.

\begin{theorem}[\textup{\cite{CarletPlateaued}}]\label{thm:Carlet-PlateauedCharacterization}
    Let $F \colon \F_2^n \to \F_2^m$ be a vectorial function.
    Then
    \begin{enumerate}
        \item $F$ is plateaued if and only if, for every $w \in \F_2^m$, the size of 
    \begin{equation}\label{eq:plateaued-characterization-by-set}
        \set{(a,b) \in (\F_2^n)^2 : D_aD_bF(x)=w}
    \end{equation}
    does not depend on $x \in \F_2^n$;
    \item $F$ is plateaued with single amplitude if and only if the size of the set in eq.~(\ref{eq:plateaued-characterization-by-set}) does not depend on $x \in \F_2^n$, nor on $w \in \F_2^m$ when $w \neq 0$.
    \end{enumerate}
\end{theorem}

Hence, for a plateaued function $F \colon \F_2^n \to \F_2^n$, the size of the set 
\[
\set{(a,b) \in (\F_2^n)^2 : F(x)+F(x+a)+F(x+b)+F(x+a+b)=w}
\]
is independent of the choice of $x \in \F_2^n$ when $w \in \F_2^n$ is fixed.
By replacing $b$ with $x+b$, the size of the above set is the same as 
\begin{align*}
    &|\set{(a,b) \in (\F_2^n)^2 : F(x)+F(x+a) + F(b)+F(a+b) = w}|\\
         &= \sum_{a \in \F_2^n} |\set{b \in \F_2^n : F(x)+F(x+a)+F(b)+F(a+b)=w}| \\ 
         &= \sum_{a \in \F_2^n} |\set{b \in \F_2^n : D_aF(b) = F(x)+F(x+a)+w}| \\
          &= \sum_{a \in \F_2^n} \delta_F(a, F(x)+F(x+a)+w).
\end{align*}
By replacing $a$ by $x+a$, we then have 
\[
|\set{(a,b) \in (\F_2^n)^2 : D_a D_b F(x)=w}| = \sum_{a \in \F_2^n} \delta_F(a+x, F(a)+F(x)+w).
\]
Let us consider the case that $F$ is APN.
Then $\delta_F = 2\gamma_F + 2^n \delta_{(0,0)}$.
So, if $w \neq 0$, then $\sum_{a \in \F_2^n} \delta_F(a+x, F(a)+F(x)+w)= 2\sum_{a \in \F_2^n} \gamma_F(a+x, F(a)+F(x)+w)$, and by applying Proposition~\ref{prop:PiF-beta-excludemults} with $b = F(x)+w$ and $\beta = x$, we know that $\sum_{a \in \F_2^n} \gamma_F(a+x, F(a)+F(x)+w) = 3\mult_{\graph F}(x,F(x)+w)$.
Hence 
\begin{equation}\label{eq:second-order-deriv-excludemult}
    |\set{(a,b) \in (\F_2^n)^2 : D_a D_b F(x)=w}| = 6 \mult_{\graph F}(x, F(x)+w)
\end{equation}
for any $w \neq 0$.
So Theorem~\ref{thm:Carlet-PlateauedCharacterization} implies that if $F$ is plateaued and $w \neq 0$, then $\mult_{\graph F}(x, F(x)+w)$ does not depend on $x \in \F_2^n$.
For $F \colon \F_2^n \to \F_2^n$ quadratic, it was shown in  \cite[Proposition 5]{DistanceBetweenAPNFunctions2020} that $\Pi_F^\beta$ does not depend on $\beta \in \F_2^n$, and as a consequence, the values that $\mult_{\graph F}(a,b)$ takes as $b$ ranges across $\F_2^n \setminus \set{F(a)}$ do not depend on the choice of $a \in \F_2^n$ by Proposition~\ref{prop:PiF-beta-excludemults}.
In the following proposition, we prove this result for all plateaued APN functions.

\begin{proposition}\label{prop:plateaued-uniformity}
    Let $F \colon \F_2^n \to \F_2^n$ be a plateaued APN function.
    If $a,b,c \in \F_2^n$ such that $b \neq F(a)$, then $\mult_{\graph F}(a,b) = \mult_{\graph F}(c,b+F(a)+F(c))$. 
    In particular, the exclude multiplicities of points in $\set{a} \times (\F_2^n \setminus \set{F(a)})$, with respect to $\graph F$, do not depend on the choice of $a$.
\end{proposition}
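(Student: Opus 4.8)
The plan is to combine the identity \eqref{eq:second-order-deriv-excludemult} with the plateaued characterization of \Cref{thm:Carlet-PlateauedChracterization}, so that essentially no new computation is needed. Recall that \eqref{eq:second-order-deriv-excludemult} states: for an APN function $F$ and any nonzero $w \in \F_2^n$,
\[
|\set{(s,t) \in (\F_2^n)^2 : D_s D_t F(x) = w}| = 6\,\mult_{\graph F}(x, F(x)+w)
\]
holds for every $x \in \F_2^n$ (I rename the summation variables to $s,t$ to avoid clashing with the $a,b,c$ of the statement). The whole task is then to pick the right $w$ and invoke the $x$-independence of the left-hand side.

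Concretely, fix $a,b,c \in \F_2^n$ with $b \neq F(a)$, and set $w = b + F(a)$, which is nonzero exactly because $b \neq F(a)$, so \eqref{eq:second-order-deriv-excludemult} applies. Evaluating it at $x=a$ gives
\[
\mult_{\graph F}(a,b) = \tfrac{1}{6}\,|\set{(s,t) \in (\F_2^n)^2 : D_s D_t F(a) = w}|.
\]
Since $F$ is plateaued, \Cref{thm:Carlet-PlateauedChracterization}(1) says the size of $\set{(s,t) : D_s D_t F(x) = w}$ is the same for $x=a$ and for $x=c$. Evaluating \eqref{eq:second-order-deriv-excludemult} at $x=c$ with the same $w$ gives
\[
\tfrac{1}{6}\,|\set{(s,t) \in (\F_2^n)^2 : D_s D_t F(c) = w}| = \mult_{\graph F}(c, F(c)+w) = \mult_{\graph F}(c, b+F(a)+F(c)),
\]
where the last point is not in $\graph F$ precisely because $w \neq 0$. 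Chaining these three equalities proves the first assertion.

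For the ``in particular'' clause, note that as $b$ ranges over $\F_2^n \setminus \set{F(a)}$ the shift $w = b + F(a)$ ranges over $\F_2^n \setminus \set{0}$, and the argument above shows that $\mult_{\graph F}(a,b)$ depends only on $w$ and not on $a$; hence the list of exclude multiplicities in $\set{a} \times (\F_2^n \setminus \set{F(a)})$, indexed by $w$, is the same for every $a$.

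I do not expect a genuine obstacle here: the substantive work is already contained in \eqref{eq:second-order-deriv-excludemult} and in \Cref{thm:Carlet-PlateauedChracterization}. The only point requiring care is the bookkeeping — checking that the substitution $w = b + F(a)$ correctly identifies the point $(x, F(x)+w)$ of \eqref{eq:second-order-deriv-excludemult} with the target point $(a,b)$, and that the hypothesis $b \neq F(a)$ is exactly what guarantees $w \neq 0$ so that \eqref{eq:second-order-deriv-excludemult} is available on both sides.
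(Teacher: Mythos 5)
Your proposal is correct and follows essentially the same route as the paper: both combine \cref{eq:second-order-deriv-excludemult} with \Cref{thm:Carlet-PlateauedChracterization} to conclude that $\mult_{\graph F}(x, F(x)+w)$ is independent of $x$ for each fixed $w \neq 0$, and then specialize to $w = b + F(a)$. The paper merely packages the resulting identity as explicit multiplicity-preserving bijections $\pi_{a,c} \colon X_a \to X_c$, which is a cosmetic difference from your direct chaining of equalities.
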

\begin{proof}
    For any $w \in \F_2^n \setminus \set{0}$, since $F$ is plateaued APN, we know from Theorem~\ref{thm:Carlet-PlateauedCharacterization} and eq.~(\ref{eq:second-order-deriv-excludemult}) that the value of $\mult_{\graph F}(x, F(x)+w)$ does not depend on the choice of $x \in \F_2^n$.
    For all $a \in \F_2^n$, let 
    \[
    X_a  = \set{a} \times (\F_2^n \setminus \set{F(a)}),
    \]
    and define $\pi_{a,0} \colon X_a \to X_0$ to be the permutation given by $\pi_{a,0}(a,b) = (0,b+F(a)+F(0))$ for all $b \neq F(a)$.
    Then $\pi_{a,0}$ also preserves exclude multiplicities because for any $(a,b) \notin \graph F$ with $w =F(a)+b \neq 0$, we have
    \begin{align*}
        \mult_{\graph F}(a,b)&=\mult_{\graph F}(a,F(a)+w) = \mult_{\graph F}(0,F(0)+w)
        = \mult_{\graph F}(0, F(0)+F(a)+b) \\
        &= \mult_{\graph F}(\pi_{a,0}(a,b)). 
    \end{align*}
    Note that the inverse of $\pi_{a,0}$ is given by $(0,b) \mapsto (a, F(a)+F(0)+b)$.
    For all $a,c \in \F_2^n$, let $\pi_{a,c} \colon X_a \to X_c$ be the permutation $\pi_{a,c}= \pi_{c,0}^{-1} \circ \pi_{a,0}$.
    Then $\mult_{\graph F}(a,b) = \mult_{\graph F}(\pi_{a,c}(a,b)) = \mult_{\graph F}(c,b+F(a)+F(c))$ for all $a,b,c \in \F_2^n$ with $b \neq F(a)$.
\end{proof}

We then immediately have the following divisibility condition on the frequencies of exclude multiplicities as a corollary of Proposition~\ref{prop:plateaued-uniformity}.
\begin{corollary}\label{cor:mult-divis}
    Let $F \colon \F_2^n \to \F_2^n$ be a plateaued APN function.
    For any non-negative integer $k$, let $m_k = |\set{(a,b) \in (\F_2^n)^2 : \mult_{\graph F}(a,b) =k}|$.
    Then $2^n|m_k$ for all $k \geq 0$. 
\end{corollary}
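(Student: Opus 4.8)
The plan is to deduce this directly from \Cref{prop:plateaued-uniformity}, with no further input needed. That proposition supplies, for every pair $a,c \in \F_2^n$, a bijection $\pi_{a,c} \colon X_a \to X_c$ between the ``columns'' $X_a = \set{a} \times (\F_2^n \setminus \set{F(a)})$ that preserves exclude multiplicity, i.e.\ $\mult_{\graph F}(p) = \mult_{\graph F}(\pi_{a,c}(p))$ for all $p \in X_a$. The key structural fact to exploit is that $(\F_2^n)^2 \setminus \graph F = \bigsqcup_{a \in \F_2^n} X_a$ is a partition into $2^n$ columns, each of size $2^n-1$.

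First I would fix a non-negative integer $k$ and, for each $a$, set $n_k^{(a)} = |\set{p \in X_a : \mult_{\graph F}(p) = k}|$. Since $\pi_{a,c}$ is a multiplicity-preserving bijection, $n_k^{(a)} = n_k^{(c)}$ for all $a,c \in \F_2^n$; denote this common value by $n_k$. Summing over the partition then gives $m_k = \sum_{a \in \F_2^n} n_k^{(a)} = 2^n n_k$, so $2^n \mid m_k$, as claimed.

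There is essentially no obstacle here; the only point requiring a little care is the domain of definition of $\mult_{\graph F}$. Exclude multiplicity is only defined at points off the graph, so $m_k$ must be read as counting points of $(\F_2^n)^2 \setminus \graph F$; with that reading one has $\sum_{k \geq 0} m_k = 2^{2n}-2^n$, matching $\sum_{k \geq 0} n_k = 2^n-1$ per column. Alternatively, if one adopts any fixed convention for the value of $\mult_{\graph F}$ on the $2^n$ points of $\graph F$, the conclusion is unchanged, since those points would all be counted in a single $m_k$ and merely shift it by $2^n$.
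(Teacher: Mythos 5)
Your proposal is correct and follows essentially the same route as the paper: both partition $(\F_2^n)^2 \setminus \graph F$ into the $2^n$ columns $X_a$ and use the multiplicity-preserving bijections of \Cref{prop:plateaued-uniformity} to conclude $m_k = 2^n n_k$. Your remark about the domain of $\mult_{\graph F}$ is a reasonable clarification but does not change the argument.
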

\begin{proof}
    For any non-negative integer $k$, let $z_k = |\set{b \in \F_2^n \setminus \set{F(0)} : \mult_{\graph F}(0,b) =k}|$.
    By Proposition~\ref{prop:plateaued-uniformity}, we know that for any $a \in \F_2^n$ there exists a bijection from $\set{a} \times (\F_2^n \setminus \set{F(a)})$ to $\set{0} \times (\F_2^n \setminus \set{F(0)})$ that preserves exclude multiplicity.
    Therefore, $m_k = 2^n z_k$ for all $k \geq 0$, so $2^n | m_k$.
\end{proof}

From this, we prove that the Brinkmann-Leander-Edel-Pott function $F \colon \F_{2^6} \to \F_{2^6}$ (see \cite{EdelPottSporadic}, \cite[Section 11.5.3]{CarletBook}, or \cite{Brinkmann2008}), given by 
\begin{align*}
    F(x)&=x^3 
    + u^{17} (x^{17} + x^{18} + x^{20}+x^{24}) 
    + u^{14} (u^{18} x^9 + u^{36}x^{18} + u^9x^{36} + x^{21} + x^{42}) \\
    &+ u^{14}\Tr(u^{52} x^3 + u^6x^5 + u^{19} x^7 + u^{28} x^{11} + u^2x^{13}),
\end{align*}
where $u \in \F_{2^n}$ is primitive, cannot be CCZ-equivalent to a plateaued function.
This improves our understanding of this sporadic APN function as the Brinkmann-Leander-Edel-Pott function was only previously known to not be CCZ-equivalent to a quadratic function or monomial function \cite{EdelPottSporadic}.
\begin{corollary}
    The Brinkmann-Leander-Edel-Pott function is not CCZ-equivalent to any plateaued APN function.
\end{corollary}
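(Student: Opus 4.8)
The plan is to establish a CCZ-invariant ``signature'' from the multiset of exclude multiplicities of $\graph F$ and then show that the Brinkmann-Leander-Edel-Pott (BLEP) function fails a property that every plateaued APN function satisfies. The key observation is \Cref{cor:mult-divis}: for a plateaued APN function $F\colon \F_2^n\to\F_2^n$, the frequency $m_k = |\set{(a,b)\in(\F_2^n)^2 : \mult_{\graph F}(a,b)=k}|$ is divisible by $2^n$ for every $k\ge 0$. Since $\Pi_F = \set{2^n}\cup\set{3\mult_{\graph F}(a,b) : (a,b)\notin\graph F}$ and, more importantly, the exclude multiplicities of $\graph F$ are preserved (as a multiset) under CCZ-equivalence --- an affine permutation $A$ of $(\F_2^n)^2$ with $A(\graph F)=\graph G$ maps triples summing to $p$ bijectively to triples summing to $A(p)$ up to the affine part, hence $\mult_{\graph G}(A(p)) = \mult_{\graph F}(p)$ --- the entire sequence $(m_k)_{k\ge 0}$ is a CCZ-invariant of an APN function. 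Consequently, if $F$ were CCZ-equivalent to a plateaued APN function, then $2^n \mid m_k$ for all $k$ would have to hold for $F$ itself.

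First I would record precisely the CCZ-invariance of exclude multiplicities: if $A(x,y) = L(x,y) + c$ is an affine permutation of $(\F_2^n)^2$ with linear part $L$ and $A(\graph F) = \graph G$, then for any $p \notin \graph F$ one has $\set{x,y,z}\subseteq\graph F$ with $x+y+z=p$ if and only if $\set{A(x),A(y),A(z)}\subseteq\graph G$ with $A(x)+A(y)+A(z) = L(p) + c$ (the three copies of $c$ and the even cancellation of $L$ being linear give $L(x+y+z)+c$), so $\mult_{\graph F}(p) = \mult_{\graph G}(L(p)+c)$; since $p\mapsto L(p)+c$ is a bijection of $(\F_2^n)^2$, the multiset $\set{\mult_{\graph F}(p) : p\in(\F_2^n)^2\setminus\graph F}$ equals $\set{\mult_{\graph G}(q) : q\in(\F_2^n)^2\setminus\graph G}$. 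Thus $(m_k)_{k\ge0}$ is CCZ-invariant. Then I would invoke \Cref{cor:mult-divis}: any plateaued APN function $G\colon\F_{2^6}\to\F_{2^6}$ has $2^6 = 64 \mid m_k(G)$ for all $k$.

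The remaining step is a finite computation: compute the multiset of exclude multiplicities of $\graph F$ for the explicit BLEP function $F\colon\F_{2^6}\to\F_{2^6}$, i.e.\ for each $(a,b)\in(\F_2^6)^2\setminus\graph F$ count the number of unordered triples in $\graph F$ summing to $(a,b)$, tabulate the frequencies $m_k(F)$, and exhibit a value of $k$ with $64\nmid m_k(F)$. (Equivalently, via \Cref{prop:PiF-beta-excludemults}, one can compute $\widehat{W_F^3}$ or simply count second-order-derivative solutions and read off $\mult_{\graph F}(x,F(x)+w) = \tfrac16|\set{(a,b):D_aD_bF(x)=w}|$.) Since $F$ is given by an explicit polynomial over $\F_{2^6}$, this is a direct $2^{12}$-scale enumeration, easily done by computer; I would report the resulting frequency table and point to the offending $m_k$. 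By the invariance above, the existence of such a $k$ shows $F$ is not CCZ-equivalent to any plateaued APN function.

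The main obstacle is not conceptual but is making the finite verification reproducible and trustworthy within the paper: one must fix the representation of $\F_{2^6}$ (a specific primitive element $u$ and defining polynomial), confirm $F$ is indeed APN in that representation, and present enough of the exclude-multiplicity frequency table that the reader can check $64\nmid m_k$ for the cited $k$ --- ideally also cross-checking against the CCZ-invariant $\Pi_F$ data already in the literature for this sporadic function. A secondary subtlety worth a sentence is that \Cref{cor:mult-divis} is stated for plateaued APN functions while the corollary's hypothesis merely says ``CCZ-equivalent to a plateaued function''; one should note that a plateaued function CCZ-equivalent to an APN function is itself APN (APN-ness is CCZ-invariant), so the divisibility genuinely applies.
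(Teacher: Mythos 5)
Your proposal is correct and follows essentially the same route as the paper: invoke the CCZ-invariance of the exclude-multiplicity frequencies together with \Cref{cor:mult-divis}, then verify computationally that some frequency for the Brinkmann-Leander-Edel-Pott function is not divisible by $2^6$ (the paper exhibits multiplicity $5$ occurring $40$ times, and $64 \nmid 40$). Your added remarks on why the invariance holds under an affine permutation and on APN-ness being CCZ-invariant are sound and only make explicit what the paper leaves implicit.
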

\begin{proof}
    Let $F \colon \F_{2^6} \to \F_{2^6}$ be the Brinkmann-Leander-Edel-Pott function.
    A straightforward computation shows the points of $\F_{2^6}^2 \setminus \graph{F}$ have exclude multiplicities $5,7,9,11,13$, and $15$ with frequencies $40,360,1296,1616,648$, and $72$, respectively (one can compute this by computing the Fourier-Hadamard transform of $W_F^3$, see eq.~(\ref{eq:WFk-set-size})).
    By Corollary~\ref{cor:mult-divis} and the fact that exclude multiplicities are preserved under CCZ-equivalence, any function CCZ-equivalent to a plateaued function must satisfy the property that the frequency of any exclude multiplicity must be divisible by $2^n$.
    Hence, the Brinkmann-Leander-Edel-Pott function is not CCZ-equivalent to any plateaued APN function as $2^6$ does not divide $40$ (or any other frequency listed above).
\end{proof}

Our proof technique for showing that the Brinkmann-Leander-Edel-Pott function is not CCZ-equivalent to a plateaued function also yields a practical test that can be applied to newly discovered APN functions.
In general, determining whether or not an arbitrary APN function $F$ is CCZ-equivalent to a plateaued or quadratic (or more generally, crooked) function is a difficult problem.
In \cite{EdelPottSporadic}, it was shown that the Brinkmann-Leander-Edel-Pott function is not CCZ-equivalent to a quadratic APN function (more generally, they proved it was not CCZ-equivalent to a crooked function) by proving that an associated so-called $\Delta$-rank was too large.

We can also describe the exclude multiplicities of points in $(\F_2^n)^2 \setminus \graph F$ with respect to $\graph F$ more directly in terms of the amplitudes of the component functions of $F$.

\begin{proposition}\label{prop:plateaued-exclude-multiplicity}
    Let $F \colon \F_2^n \to\F_2^n$ be a plateaued APN function, and let $\lambda_v$ denote the amplitude of $v \cdot F$.
    Then for any $(a,b) \in (\F_2^n)^2 \setminus \graph F$, we have
    \begin{equation}\label{eq:plateaued-exclude-multiplicities}
    \mult_{\graph F}(a,b) =\frac{1}{6 \cdot 2^n} \sum_{v \in \F_2^n}(-1)^{v \cdot (F(a) +b )}\lambda_v^2.
    \end{equation}
\end{proposition}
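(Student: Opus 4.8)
The plan is to combine the Walsh-transform expression for exclude multiplicities from \Cref{prop:PiF-beta-excludemults} with the plateaued hypothesis. Since $(a,b) \in (\F_2^n)^2 \setminus \graph F$ means $b \neq F(a)$, \Cref{prop:PiF-beta-excludemults} already gives $3\,\mult_{\graph F}(a,b) = \tfrac{1}{2^{2n+1}}\,\widehat{W_F^3}(a,b)$, so the whole task reduces to showing
\[
\widehat{W_F^3}(a,b) = 2^n \sum_{v \in \F_2^n} \lambda_v^2\,(-1)^{v \cdot (F(a)+b)} .
\]
First I would unfold the Fourier-Hadamard transform of $W_F^3$, regarded as an integer-valued function on $(\F_2^n)^2$, to write $\widehat{W_F^3}(a,b) = \sum_{(u,v) \in (\F_2^n)^2} (-1)^{u \cdot a + v \cdot b}\, W_F(u,v)^3$.

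The only real idea in the proof is that plateauedness linearizes the cube. For each fixed $v$, the component $v \cdot F$ has amplitude $\lambda_v$, so $W_F(u,v) \in \set{0, \pm \lambda_v}$ for every $u$, and therefore $W_F(u,v)^3 = \lambda_v^2\, W_F(u,v)$ in both cases. This identity also covers $v = 0$, where $W_F(\cdot,0) = 2^n\,\delta_0$ and the amplitude is $\lambda_0 = 2^n$, so no value of $v$ has to be treated separately. Substituting, I get $\widehat{W_F^3}(a,b) = \sum_{v} \lambda_v^2\,(-1)^{v \cdot b} \sum_{u}(-1)^{u \cdot a}\, W_F(u,v)$.

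Finally I would evaluate the inner sum: inserting $W_F(u,v) = \sum_{x}(-1)^{u \cdot x + v \cdot F(x)}$ and exchanging summation order gives $\sum_{u}(-1)^{u \cdot a} W_F(u,v) = \sum_{x}(-1)^{v \cdot F(x)} \sum_{u}(-1)^{u \cdot (a+x)} = 2^n (-1)^{v \cdot F(a)}$ by orthogonality of characters. Plugging this back in yields $\widehat{W_F^3}(a,b) = 2^n \sum_{v} \lambda_v^2\,(-1)^{v \cdot (F(a)+b)}$, and dividing by $3 \cdot 2^{2n+1}$ produces \Cref{eq:plateaued-exclude-multiplicities}. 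I do not expect a genuine obstacle here; the single step that needs care is the cube-linearization $W_F(u,v)^3 = \lambda_v^2 W_F(u,v)$, and everything after that is a routine Fourier-inversion computation.
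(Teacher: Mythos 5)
Your proof is correct and follows essentially the same route as the paper's: both rest on the cube-linearization $W_F(u,v)^3 = \lambda_v^2\, W_F(u,v)$ (valid for all $(u,v)$, including $v=0$ as you note) followed by a routine Fourier inversion against the expression for $\mult_{\graph F}$ from \Cref{prop:PiF-beta-excludemults}. The only difference is cosmetic: the paper first moves the point to $(0,\,b+F(a)+F(0))$ via \Cref{prop:plateaued-uniformity} before inverting, whereas you evaluate $\widehat{W_F^3}(a,b)$ directly at $(a,b)$, which slightly shortens the argument and removes the dependence on that proposition.
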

\begin{proof}
    For any $(u,v) \in (\F_2^n)^2 \notin \graph F$, we have $W_F^3(u,v)=\lambda_v^2 W_F(u,v)$ as $F$ is plateaued.
    Recall that for any $(a,b) \notin \graph F$, we have $\mult_{\graph F}(a,b) = \frac{1}{6 \cdot 2^{2n}}\widehat{W_F^3}(a,b)$ by eq.~(\ref{eq:WFk-set-size}), and by Proposition~\ref{prop:plateaued-uniformity}, we also have that $\mult_{\graph F}(a,b) = \mult_{\graph F}(0,b+F(a)+F(0))$.
    So, for $(a,b) \notin \graph F$, 
    \begin{align*}
        \mult_{\graph F}(a,b) &=\frac{1}{6 \cdot 2^{2n}}\sum_{(u,v) \in (\F_2^n)^2}(-1)^{v\cdot (b+F(a)+F(0))} \lambda_v^2 W_F(u,v)\\
        &= \frac{1}{6 \cdot 2^{2n}}\sum_{v \in \F_2^n}(-1)^{v\cdot (b+F(a)+F(0))} \lambda_v^2 \sum_{x \in \F_2^n} (-1)^{v \cdot F(x)} \sum_{u \in \F_2^n}  (-1)^{x \cdot u}\\
        &= \frac{1}{6 \cdot 2^n}\sum_{v \in \F_2^n}(-1)^{v\cdot (b+F(a))} \lambda_v^2.
    \end{align*}
\end{proof}

As seen in Proposition~\ref{prop:plateaued-exclude-multiplicity}, the amplitudes of the component functions of a plateaued APN function can be used to describe the exclude multiplicities of points in $(\F_2^n)^2 \setminus \graph{F}$ with respect to $\graph F$.
For $c \neq 0$, it is clear that we have the equalities 
\begin{equation}\label{eq:amplitudes-rephrased}
2\sum_{v \in \set{0,c}^\perp} \lambda_v^2 - \sum_{v \in \F_2^n} \lambda_v^2
= \sum_{v \in \F_2^n}(-1)^{v \cdot c} \lambda_v^2 
= \sum_{v \in \F_2^n} \lambda_v^2 - 2\sum_{v \notin \set{0,c}^\perp} \lambda_v^2.
\end{equation}
By using the following proposition, we are able to rephrase bounding from below $e_{\min}(\graph F)$, that is $\min_{(a,b) \in (\F_2^n)^2 \setminus \graph{F}}\mult_{\graph F}(a,b)$, in terms of minimizing the sum of squared amplitudes over any linear hyperplane.
This is because $\sum_{v \in \F_2^n} \lambda_v^2=2^n(3\cdot 2^n-2)$ precisely when $F$ is APN.

\begin{proposition}[{\cite[Proposition 9]{CarletPlateaued}}]\label{prop:Carlet-amplitude-squared-characterization}
    Let $F \colon \F_2^n \to \F_2^n$ be a plateaued function, and let $\lambda_v$ denote the amplitude of $v \cdot F$ for all $v \in \F_2^n$.
    Then $F$ is APN if and only if 
    \begin{equation}\label{eq:APN-iff-amplitudes}
    \sum_{v\in \F_2^n}\lambda_v^2 \leq 2^n(3\cdot 2^n-2).
    \end{equation}
    In particular, if $F$ is APN, then (\ref{eq:APN-iff-amplitudes}) is an equality.
\end{proposition}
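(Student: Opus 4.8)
The plan is to reduce the statement to the Walsh-transform characterization of APN functions already recorded in \Cref{sec:prelim}, exploiting the plateaued structure to turn the fourth moment of $W_F$ into the sum $\sum_v \lambda_v^2$. First I would note the elementary identity: fix $v \in \F_2^n$; since $F$ is plateaued, $W_F(u,v) \in \set{0, \pm \lambda_v}$ for every $u$, so $W_F^4(u,v) = \lambda_v^2 W_F^2(u,v)$ for all $u$, and by Parseval $\sum_{u} W_F^2(u,v) = 2^{2n}$ (this also holds at $v = 0$, where $W_F(u,0) = 2^n\delta_0(u)$ and $\lambda_0 = 2^n$). Summing over $u$ and then over $v$ gives
\[
\sum_{(u,v)\in(\F_2^n)^2} W_F^4(u,v) = 2^{2n}\sum_{v\in\F_2^n}\lambda_v^2 .
\]

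Next I would invoke the Chabaud--Vaudenay identity: $F$ is APN if and only if $\sum_{(u,v)} W_F^4(u,v) = 2^{2n}(3\cdot 2^{2n} - 2^{n+1})$. Substituting the display above, $F$ is APN iff $\sum_v \lambda_v^2 = 3\cdot 2^{2n} - 2^{n+1} = 2^n(3\cdot 2^n - 2)$; this immediately yields the ``only if'' direction of the proposition and the final ``equality'' assertion. For the ``if'' direction one needs slightly more than the equality form of the criterion, namely the universal fourth-moment bound $\sum_{(u,v)} W_F^4(u,v) \ge 2^{2n}(3\cdot 2^{2n} - 2^{n+1})$, valid for every $(n,n)$-function. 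Granting this, the hypothesis $\sum_v \lambda_v^2 \le 2^n(3\cdot 2^n - 2)$ forces $\sum_{(u,v)} W_F^4(u,v) \le 2^{2n}(3\cdot 2^{2n}-2^{n+1})$, hence equality holds, hence $F$ is APN.

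It then remains to justify this universal lower bound, which I would prove through the difference table. Using $\delta_F = 1_{\graph F}\otimes 1_{\graph F}$ so that $\widehat{\delta_F} = W_F^2$ (the convolution identities invoked in the proof of \Cref{prop:PiF-beta-excludemults}), Parseval on $(\F_2^n)^2$ gives $\sum_{(u,v)} W_F^4(u,v) = 2^{2n}\sum_{(a,b)} \delta_F(a,b)^2$. For each fixed $a$ one has $\sum_b \delta_F(a,b) = 2^n$; the row $a = 0$ contributes $\delta_F(0,\cdot) = 2^n\delta_0(\cdot)$, i.e. $\sum_b \delta_F(0,b)^2 = 2^{2n}$, and for $a \neq 0$ the values $\delta_F(a,b)$ are non-negative even integers, so by convexity $\sum_b \delta_F(a,b)^2 \ge 2^{n+1}$, with equality exactly when every $\delta_F(a,b) \in \set{0,2}$. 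Summing over the $2^n$ rows gives $\sum_{(a,b)}\delta_F(a,b)^2 \ge 2^{2n} + (2^n-1)2^{n+1} = 3\cdot 2^{2n} - 2^{n+1}$, with equality precisely when $F$ is APN.

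The step I expect to require the most care is not a calculation but a logical one: the ``if'' direction genuinely depends on the universal fourth-moment inequality rather than merely on the equality form of Chabaud--Vaudenay; once that inequality is in hand (it is classical, and drops out of the convexity argument just sketched), the rest of the proof is a short substitution. A secondary point to watch is consistent bookkeeping of the degenerate indices $v = 0$ (where $\lambda_0 = 2^n$) and $a = 0$ (where the row of the difference table is $2^n\delta_0$), since these behave differently from the generic terms but must be included in both sums.
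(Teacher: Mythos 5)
Your proposal is correct. The paper does not prove this statement at all --- it is imported verbatim as \cite[Proposition 9]{CarletPlateaued} --- so there is no internal proof to compare against; your argument (plateaued structure giving $W_F^4(u,v)=\lambda_v^2W_F^2(u,v)$, Parseval in $u$, the Chabaud--Vaudenay fourth-moment identity, and the universal lower bound $\sum_{(u,v)}W_F^4(u,v)\geq 2^{2n}(3\cdot 2^{2n}-2^{n+1})$ obtained from $\sum_b\delta_F(a,b)=2^n$ with even entries) is the standard derivation and matches the one in the cited reference. You are also right to flag that the ``if'' direction genuinely requires the universal inequality rather than only the equality form of the criterion, and your handling of the degenerate indices $v=0$ and $a=0$ is consistent.
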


\begin{corollary}\label{cor:mult-hyperplane}
    Let $F \colon \F_2^n \to \F_2^n$ be a plateaued APN function, and let $\lambda_v$ denote the amplitude of $v \cdot F$ for all $v \in \F_2^n$.
    Then for any $(a,b) \in (\F_2^n)^2 \setminus \graph F$, we have
    \begin{align*}
        \mult_{\graph F}(a,b) = \frac{1}{3 \cdot 2^n} \parens{\sum_{v \in \set{0,b+F(a)}^\perp} \lambda_v^2 - 2^{n-1}(3\cdot 2^n-2)}.
    \end{align*}
\end{corollary}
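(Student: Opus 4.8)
The plan is a direct computation that chains together three results already in hand. Fix $(a,b)\in(\F_2^n)^2\setminus\graph F$ and write $c=F(a)+b$; since $(a,b)$ is not on the graph, $c\neq 0$. By \Cref{prop:plateaued-exclude-multiplicity} we have $\mult_{\graph F}(a,b)=\frac{1}{6\cdot 2^n}\sum_{v\in\F_2^n}(-1)^{v\cdot c}\lambda_v^2$, so the entire task reduces to rewriting the signed sum $\sum_{v\in\F_2^n}(-1)^{v\cdot c}\lambda_v^2$ in terms of a sum over the hyperplane $\set{c}^\perp$.

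Because $c\neq 0$, the identity \cref{eq:amplitudes-rephrased} applies and yields $\sum_{v\in\F_2^n}(-1)^{v\cdot c}\lambda_v^2 = 2\sum_{v\in\set{c}^\perp}\lambda_v^2-\sum_{v\in\F_2^n}\lambda_v^2$. Now I would invoke \Cref{prop:Carlet-amplitude-squared-characterization}: since $F$ is APN and plateaued, $\sum_{v\in\F_2^n}\lambda_v^2=2^n(3\cdot 2^n-2)$ exactly. Substituting this into the previous expression and then back into the formula for $\mult_{\graph F}(a,b)$, and simplifying the constants via $\frac{2}{6\cdot 2^n}=\frac{1}{3\cdot 2^n}$ together with $\frac{2^n(3\cdot 2^n-2)}{6\cdot 2^n}=\frac{2^{n-1}(3\cdot 2^n-2)}{3\cdot 2^n}$, gives precisely $\mult_{\graph F}(a,b)=\frac{1}{3\cdot 2^n}\bigl(\sum_{v\in\set{b+F(a)}^\perp}\lambda_v^2-2^{n-1}(3\cdot 2^n-2)\bigr)$, using $\set{c}^\perp=\set{b+F(a)}^\perp$ for the indexing of the hyperplane in the statement.

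There is essentially no obstacle here: the corollary is a bookkeeping consequence of \Cref{prop:plateaued-exclude-multiplicity,prop:Carlet-amplitude-squared-characterization} and \cref{eq:amplitudes-rephrased}. The only two points that deserve a moment of attention are (i) verifying that $c=F(a)+b\neq 0$, which is exactly the hypothesis $(a,b)\notin\graph F$ and is what makes \cref{eq:amplitudes-rephrased} applicable, and (ii) carefully tracking the powers of $2$ when passing from the $\tfrac{1}{6\cdot 2^n}$-normalization of \Cref{prop:plateaued-exclude-multiplicity} to the $\tfrac{1}{3\cdot 2^n}$-normalization in the statement.
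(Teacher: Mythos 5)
Your proposal is correct and follows exactly the paper's own proof: apply \Cref{prop:plateaued-exclude-multiplicity}, split the signed sum over the hyperplane $\set{b+F(a)}^\perp$ as in \cref{eq:amplitudes-rephrased}, and substitute $\sum_{v}\lambda_v^2 = 2^n(3\cdot 2^n-2)$ from \Cref{prop:Carlet-amplitude-squared-characterization}. The two points you flag (that $c=F(a)+b\neq 0$ and the constant bookkeeping) are handled correctly.
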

\begin{proof}
    Let $(a,b) \in (\F_2^n)^2 \setminus \graph F$.
    By eq.~(\ref{eq:plateaued-exclude-multiplicities}), we have 
    \[
    \mult_{\graph F}(a,b) =\frac{1}{6 \cdot 2^n} \sum_{v \in \F_2^n}(-1)^{v \cdot (F(a) +b )}\lambda_v^2 = \frac{1}{6 \cdot 2^n} \parens{2\sum_{v \in \set{0,b + F(a)}^\perp} \lambda_v^2 - \sum_{v \in \F_2^n}\lambda_v^2}.
    \]
    The result then follows by applying Proposition~\ref{prop:Carlet-amplitude-squared-characterization}.
\end{proof}

\begin{corollary}\label{cor:plat-dist-by-hyperplane}
   Let $F \colon \F_2^n \to \F_2^n$ be a plateaued APN function, and let $\lambda_v$ denote the amplitude of $v \cdot F$ for all $v \in \F_2^n$.
   For any $(a,b) \in (\F_2^n)^2 \setminus \graph F$, we have 
    \begin{equation}\label{eq:minimum-multiplicity-in-terms-of-hyperplane}
    \sum_{v \in \set{0,b+F(a)}^\perp} \lambda_v^2 = 3 \cdot 2^n \mult_{\graph F}(a,b) + 2^{n-1}(3 \cdot 2^n-2).
    \end{equation}
\end{corollary}
\begin{proof}
    Immediate upon rearrangement of the equation from Corollary~\ref{cor:mult-hyperplane}.
\end{proof}

Although Corollary~\ref{cor:plat-dist-by-hyperplane} follows easily from previous statements, it helps shape our approach on finding a lower bound on the minimum exclude multiplicity of the graph of a plateaued APN function.
By using a geometric constraint on the structure of $\bentcomps{F} = \set{v \in \F_2^n : v \cdot F \text{ is bent}}$, we can determine a lower bound on the distance between plateaued APN functions and other APN functions.
We use the following corollary which follows from a classical result of \cite{BOSE196696}.

\begin{corollary}[{\cite[Corollary 1]{MaxNumBentComponents}}]\label{cor:set-intersecting-subspaces}
    A set $S \subseteq (\F_2^n \setminus \set{0})$ that intersects every $(n+1-k)$-dimensional subspace of $\F_2^n$ has at least $2^k-1$ elements with equality if and only if $S \cup \set{0}$ is a $k$-dimensional subspace of $\F_2^n$.
\end{corollary}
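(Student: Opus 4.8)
The plan is to recognize this statement as a transcription of the classical Bose--Burton theorem \cite{BOSE196696}. Over $\F_2$ the nonzero vectors of $\F_2^n$ are the points of $\mathrm{PG}(n-1,2)$, and a linear subspace of dimension $d$ is a projective subspace of projective dimension $d-1$; in that language Bose--Burton asserts that any set of points meeting every projective subspace of a fixed projective codimension $k-1$ has at least $2^k-1$ points, with equality exactly for the point sets of projective subspaces of projective dimension $k-1$. Since an $(n+1-k)$-dimensional linear subspace of $\F_2^n$ has codimension $k-1$, our hypothesis is precisely the Bose--Burton hypothesis with codimension parameter $k-1$, and the bound $2^k-1$ together with the description of the equality case as ``$S\cup\set{0}$ is a $k$-dimensional subspace'' transcribes directly. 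This is the form recorded in \cite[Corollary~1]{MaxNumBentComponents}, so the shortest route is simply to invoke it; the rest of this sketch indicates how one would prove it from scratch.

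For a self-contained argument I would induct on $n$, the key move being to quotient by a vector that is \emph{not} in $S$. If $S=\F_2^n\setminus\set{0}$ the bound is trivial, so pick $P\in\F_2^n\setminus(S\cup\set{0})$ and let $\pi\colon\F_2^n\to\F_2^n/\langle P\rangle\cong\F_2^{n-1}$ be the quotient map. Because $0,P\notin S$, the image $\bar S:=\pi(S)$ is a subset of $\F_2^{n-1}\setminus\set{0}$ with $|\bar S|\le|S|$, and it meets every $(n-k)$-dimensional (equivalently, $((n-1)+1-k)$-dimensional) subspace of $\F_2^{n-1}$, since the $\pi$-preimage of such a subspace is an $(n+1-k)$-dimensional subspace of $\F_2^n$ and hence is met by $S$. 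By induction $|\bar S|\ge 2^k-1$, so $|S|\ge 2^k-1$; the base case $n=k$ is immediate, since then $S$ must be all of $\F_2^n\setminus\set{0}$.

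The equality case is the delicate part and the step I expect to be the main obstacle. When $|S|=2^k-1$, the quotient above forces $\pi|_S$ to be injective and $|\bar S|=2^k-1$, so by the inductive equality statement $\bar S\cup\set{0}$ is a $k$-dimensional subspace; pulling back, $S$ is contained in a $(k+1)$-dimensional subspace $T\ni P$ of $\F_2^n$. From $|S|=2^k-1$ we get $\dim\langle S\rangle\in\set{k,k+1}$, and if $\dim\langle S\rangle=k$ then $S=\langle S\rangle\setminus\set{0}$ and we are done. To exclude $\dim\langle S\rangle=k+1$ I would first observe that in that case $S$ must meet every $2$-dimensional subspace of $T$: any such plane can be written as $W\cap T$ for a suitable $(n+1-k)$-dimensional subspace $W$ of $\F_2^n$ (the dimension count $\dim W+\dim T-n=2$ makes this possible), and $S\cap W=S\cap(W\cap T)$ since $S\subseteq T$. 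If $k+1<n$ one then recurses inside $T\cong\F_2^{k+1}$ to conclude that $S\cup\set{0}$ is $k$-dimensional, contradicting $\dim\langle S\rangle=k+1$; and in the boundary case $k=n-1$, where $T=\F_2^n$, the quotient identifies $S$ with a ``section'' of $\F_2^n$ over $\F_2^{n-1}$, and a short computation shows that meeting every plane forces the section's defining function to be additive, so $S\cup\set{0}$ would be a hyperplane, again contradicting $\dim\langle S\rangle=k+1$. Making this boundary case airtight, and checking that the recursion on the ambient dimension is genuinely well-founded, is where the real care is needed; citing \cite{BOSE196696} sidesteps all of it.
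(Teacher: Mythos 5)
The paper offers no proof of this corollary: it is quoted directly from \cite[Corollary 1]{MaxNumBentComponents} as a consequence of the classical Bose--Burton theorem \cite{BOSE196696}, which is exactly the primary route you propose. Your supplementary from-scratch sketch is also sound --- the quotient by a point $P\notin S\cup\set{0}$ preserves the hypothesis with the ambient dimension reduced by one, the equality case correctly reduces to the span $T$ of dimension $k$ or $k+1$, and in the boundary case $k=n-1$ the computation you allude to does work (for linearly independent $h_1,h_2$ the plane $\langle h_1+(\epsilon(h_1)+1)P,\,h_2+(\epsilon(h_2)+1)P\rangle$ forces $\epsilon(h_1+h_2)=\epsilon(h_1)+\epsilon(h_2)$, so the section is the graph of a linear form and $S\cup\set{0}$ is a hyperplane) --- so there is nothing to correct.
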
 

The following lemma is a simple generalization of Theorem 3 from \cite{MaxNumBentComponents}.

\begin{lemma}\label{lem:nontrivial-nonbent-components}
Assume $n$ is even.
    Let $F \colon \F_2^n \to \F_2^n$ be any vectorial Boolean function.
    Let $S \subseteq \F_2^n$ be a linear subspace of dimension $d$ where $\frac{n}{2}+1 \leq d \leq n$.
    Then $|S \setminus \bentcomps{F}| \geq 2^{d-\frac{n}{2}}$ with equality if and only if $S\setminus \bentcomps{F}$ is a $(d-\frac{n}{2})$-dimensional linear subspace of $S$.
    Hence, there are at least $2^{d-\frac{n}{2}}-1$ nonzero, non-bent component functions $v \cdot F$ with $v \in S$.
\end{lemma}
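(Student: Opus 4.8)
The plan is to reduce the statement to a single application of \Cref{cor:set-intersecting-subspaces}, carried out \emph{inside} the subspace $S$ itself, viewed as a copy of $\F_2^d$. Put $T = (S \setminus \bentcomps{F}) \setminus \set{0}$. Since $0 \cdot F = 0$ is never bent we have $0 \in S \setminus \bentcomps{F}$, so $S \setminus \bentcomps{F} = T \cup \set{0}$, and it suffices to prove $|T| \geq 2^{d-\frac n2} - 1$ together with the stated equality condition. The engine of the proof is the following claim: \emph{no $(\frac n2 + 1)$-dimensional subspace $V \subseteq \F_2^n$ satisfies $V \setminus \set{0} \subseteq \bentcomps{F}$}; equivalently, $T$ meets every $(\frac n2+1)$-dimensional subspace of $S$. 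This is exactly the content of \cite[Theorem 3]{MaxNumBentComponents} in the special case $S = \F_2^n$, and the argument is the same in general.

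To prove the claim I would argue by contradiction. Suppose $V$ is a subspace of dimension $\frac n2 + 1$ with $v \cdot F$ bent for every nonzero $v \in V$. Choose a linear injection $\ell \colon \F_2^{\frac n2 + 1} \to \F_2^n$ with image $V$, and let $L \colon \F_2^n \to \F_2^{\frac n2 + 1}$ be its transpose, which is surjective. The component functions of $G = L \circ F$ are $w \cdot G = \ell(w) \cdot F$ for $w \in \F_2^{\frac n2 + 1}$, so the nonzero components of $G$ are precisely the functions $v \cdot F$ with $v \in V \setminus \set{0}$, all bent by hypothesis. Hence $G \colon \F_2^n \to \F_2^{\frac n2 + 1}$ is a vectorial bent function, contradicting Nyberg's classical bound \cite{NybergBook1994} that bent $(n,m)$-functions exist only when $m \leq \frac n2$. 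This establishes the claim.

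With the claim in hand, fix a linear isomorphism $S \cong \F_2^d$. Under it, $T$ is a subset of $\F_2^d \setminus \set{0}$ that intersects every $(\frac n2 + 1)$-dimensional subspace, that is, every $(d+1-k)$-dimensional subspace for $k = d - \frac n2$; note $1 \leq k \leq d$ because $\frac n2 + 1 \leq d$. Applying \Cref{cor:set-intersecting-subspaces} with ambient dimension $d$ gives $|T| \geq 2^{k} - 1 = 2^{d - \frac n2} - 1$, with equality if and only if $T \cup \set{0}$ is a $k$-dimensional subspace of $S$. Since $T \cup \set{0} = S \setminus \bentcomps{F}$, we conclude $|S \setminus \bentcomps{F}| = |T| + 1 \geq 2^{d - \frac n2}$, with equality if and only if $S \setminus \bentcomps{F}$ is a $(d - \frac n2)$-dimensional linear subspace of $S$. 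The closing assertion then follows because the nonzero non-bent component directions $v \in S$ are exactly the elements of $T$, of which there are at least $2^{d - \frac n2} - 1$. The only place demanding care is the bookkeeping in the second paragraph: one must set up the transpose/linear-map identification so that the nonzero components of $G$ really are \emph{all} of $\set{v \cdot F : v \in V \setminus \set 0\}$ and nothing else; once that is done, Nyberg's nonexistence bound closes the argument immediately, and the rest is a direct invocation of \Cref{cor:set-intersecting-subspaces} plus the harmless observation that $0 \notin \bentcomps{F}$.
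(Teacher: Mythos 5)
Your proposal is correct and follows essentially the same route as the paper: both reduce the statement to the fact (from the proof of Theorem 3 in the cited work on maximal numbers of bent components, via Nyberg's bound) that no $(\frac{n}{2}+1)$-dimensional subspace can have all its nonzero elements in $\bentcomps{F}$, and then apply \Cref{cor:set-intersecting-subspaces} inside $S \cong \F_2^d$ with $k = d-\frac{n}{2}$. The only difference is that you spell out the construction of the vectorial bent function $G = L\circ F$ that the paper leaves as a parenthetical remark, which is a harmless elaboration rather than a different argument.
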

\begin{proof} 
    In \cite[Proof of Theorem 3]{MaxNumBentComponents}, it was shown that there cannot exist a $(\frac{n}{2}+1)$-dimensional linear subspace $T$ of $\F_2^n$ such that $T \setminus \set{0}$ is contained in $\bentcomps{F}$ (otherwise we could easily construct a function $\F_2^n \to \F_2^{n/2+1}$ whose component functions are all bent, but this is impossible by Nyberg's bound).
    Hence, $S \cap \bentcomps{F}$ cannot contain all nonzero elements of a linear subspace of dimension $\frac{n}{2}+1$ because it is a subset of $\bentcomps{F}$.
    So $S \setminus (\bentcomps{F}\cup \set{0})$ must intersect every linear subspace of $S$ of dimension $\frac{n}{2}+1$.
    Also, note that $\frac{n}{2}+1 = d+1 - (d-\frac{n}{2})$.
    So, by applying a linear isomorphism from $S$ to $\F_2^d$ and considering the image of $S \setminus (\bentcomps{F}\cup \set{0})$, it follows from Corollary~\ref{cor:set-intersecting-subspaces} that 
    \[
    |S \setminus (\bentcomps{F}\cup \set{0})| \geq 2^{d-\frac{n}{2}}-1
    \]
    with equality if and only if $S \setminus \bentcomps{F}$ is a $(d-\frac{n}{2})$-dimensional subspace of $S$.
\end{proof}

From this, we are able to derive lower bounds on the exclude multiplicities of the graphs of plateaued APN functions (for $n$ even). 

\begin{theorem}\label{thm:plat-exclude-bound}
    Assume $n$ is even.
    Let $F \colon \F_2^n \to \F_2^n$ be a plateaued APN function.
    Then $\mult_{\graph F}(a,b) \geq 2^{\frac{n}{2}-1}-1$ for all $(a,b) \in (\F_2^n)^2 \setminus \graph F$.
\end{theorem}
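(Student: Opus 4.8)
The plan is to combine \Cref{cor:plat-dist-by-hyperplane} with the geometric input from \Cref{lem:nontrivial-nonbent-components}. By \Cref{cor:plat-dist-by-hyperplane}, it suffices to show that for every nonzero $c \in \F_2^n$, the hyperplane $\set{c}^\perp$ satisfies
\[
\sum_{v \in \set{c}^\perp} \lambda_v^2 \geq 3 \cdot 2^n (2^{\frac{n}{2}-1}-1) + 2^{n-1}(3 \cdot 2^n - 2).
\]
So the entire problem reduces to a lower bound on $\sum_{v \in \set{c}^\perp} \lambda_v^2$ valid for every linear hyperplane.

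The key step is to estimate this hyperplane sum from below using the structure of $\bentcomps F$. First I would record the value of the amplitudes: for a plateaued APN function, a component $v \cdot F$ is either bent (with $\lambda_v = 2^{n/2}$) or has $\lambda_v^2 \geq 2^{n+2}$, since the amplitude of a plateaued Boolean function is a power of $2$ and the smallest value above $2^{n/2}$ that can occur is $2^{(n+2)/2}$ when $n$ is even. (The component $v = 0$ has $\lambda_0 = 2^n$, which is even larger, and it lies in every hyperplane.) Now apply \Cref{lem:nontrivial-nonbent-components} with $S = \set{c}^\perp$, which is a linear subspace of dimension $d = n-1 \geq \frac{n}{2}+1$ (using $n \geq 4$): there are at least $2^{(n-1)-n/2} = 2^{n/2 - 1}$ elements of $\set{c}^\perp$ that are not bent components, i.e. at least $2^{n/2-1} - 1$ nonzero non-bent ones plus the zero component. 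Splitting the sum according to whether $v$ is a bent component, I would bound
\[
\sum_{v \in \set{c}^\perp} \lambda_v^2 \geq (2^{n-1} - 2^{\frac{n}{2}-1})\cdot 2^n + 2^{2n} + (2^{\frac{n}{2}-1}-1)\cdot 2^{n+2},
\]
where the three terms count bent components in the hyperplane, the zero component, and the nonzero non-bent components respectively. It then remains to check that the right-hand side is at least $3 \cdot 2^n (2^{n/2-1}-1) + 2^{n-1}(3\cdot 2^n - 2) = 3 \cdot 2^{3n-1} - 2^n - 3 \cdot 2^{3n/2} + 3\cdot 2^n$; this is a routine arithmetic comparison that I would verify by expanding both sides.

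The main obstacle is getting the counting in the hyperplane sum exactly right — in particular, being careful that \Cref{lem:nontrivial-nonbent-components} counts $v = 0$ among the non-bent elements, so that when I isolate the $v=0$ term with its large weight $2^{2n}$ I do not double-count, and conversely that the number of genuinely bent components lying in $\set{c}^\perp$ is at most $2^{n-1} - 2^{n/2-1}$ (not $2^{n-1}$), which is precisely what the equality case of \Cref{lem:nontrivial-nonbent-components} gives. A secondary point to handle cleanly is the claim that a non-bent plateaued component has squared amplitude at least $2^{n+2}$: this follows because $\lambda_v$ is a power of $2$ with $\lambda_v^2 = 2^n \cdot \#\set{u : W_F(u,v) \neq 0}$ an even power of $2$ strictly exceeding $2^n$, hence at least $2^{n+2}$. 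Once these bookkeeping points are settled, the theorem follows immediately from \Cref{cor:plat-dist-by-hyperplane}.
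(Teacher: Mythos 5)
Your proposal is correct and follows essentially the same route as the paper's proof: the paper likewise reduces the claim via \Cref{cor:mult-hyperplane}/\Cref{cor:plat-dist-by-hyperplane} to a lower bound on $\sum_{v \in \set{c}^\perp}\lambda_v^2$, applies \Cref{lem:nontrivial-nonbent-components} to the $(n-1)$-dimensional subspace $\set{c}^\perp$, and splits the sum into the zero component ($2^{2n}$), the bent components ($2^n$ each), and the nonzero non-bent components ($\geq 2^{n+2}$ each); the paper keeps $N = |\set{c}^\perp \setminus (\bentcomps{F}\cup\set{0})|$ as a parameter and derives $\sum_{v\in\set{c}^\perp}\lambda_v^2 \geq 3\cdot 2^n N + 2^{n-1}(3\cdot 2^n-2)$, so your final ``routine arithmetic comparison'' is in fact an equality and does hold. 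Two small slips worth fixing: your written expansion of $3\cdot 2^n(2^{n/2-1}-1)+2^{n-1}(3\cdot 2^n-2)$ is garbled (the correct value is $3\cdot 2^{2n-1}+3\cdot 2^{3n/2-1}-2^{n+2}$, not $3\cdot 2^{3n-1}-3\cdot 2^{3n/2}+2^{n+1}$), and the Parseval relation is $\lambda_v^2\cdot|\set{u : W_F(u,v)\neq 0}| = 2^{2n}$ rather than $\lambda_v^2 = 2^n\cdot|\set{u : W_F(u,v)\neq 0}|$ --- though the conclusion you draw from it, that a non-bent plateaued component has $\lambda_v^2$ equal to an even power of $2$ strictly exceeding $2^n$ and hence at least $2^{n+2}$, is correct and is used implicitly in the paper as well.
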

\begin{proof}
    Let $c = F(a) + b$ for $(a,b) \in (\F_2^n)^2 \setminus \graph F$.
    Let $N = |\set{0,c}^\perp \setminus (\bentcomps{F}\cup \set{0})|$.
    Recall that if $v \in \bentcomps{F}$, then $\lambda_v=2^{\frac{n}{2}}$, and otherwise, $\lambda_v \geq 2^{\frac{n}{2}+1}$. 
    So
    \begin{align*}
    \sum_{v \in \set{0,c}^\perp} \lambda_v^2 
    &= 2^{2n} + (2^{n-1} - 1 -N)2^n + \sum_{v \in \set{0,c}^\perp \setminus (\bentcomps{F} \cup \set{0})} \lambda_v^2 \\
    &\geq 2^{2n} + (2^{n-1} - 1 -N)2^n + N 2^{n+2} \\
        &= 3 \cdot 2^n N + 2^{n-1}(3 \cdot 2^n-2).
    \end{align*}
    Applying eq.~(\ref{eq:minimum-multiplicity-in-terms-of-hyperplane}) to the left-hand-side of the inequality above, we have that $\mult_{\graph F}(a,b) \geq N$.
    By Lemma~\ref{lem:nontrivial-nonbent-components}, we have $N \geq 2^{\frac{n}{2}-1}-1$, and so $\mult_{\graph F}(a,b) \geq 2^{\frac{n}{2}-1}-1$.
\end{proof}

Our lower bound in Theorem~\ref{thm:plat-exclude-bound} is a dramatic increase over the previously known $e_{\min}(\graph F) = \min_{(a,b) \in (\F_2^n)^2 \setminus \graph F}\mult_{\graph F}(a,b) \geq 1$ (for $n\geq 3$) when $F$ is plateaued APN.
It would be interesting to further improve this lower bound or find an example of a family of plateaued APN functions for which it is tight.
The first of our three main results immediately follows.

\begin{proof}[Proof of Theorem~\ref{thm:plat-dist-bound}]
    For any plateaued APN function $F \colon \F_2^n \to \F_2^n$ and any point $(a,b)$ of $(\F_2^n)^2$ not contained in $\graph F$, we have that $\mult_{\graph F}(a,b) \geq 2^{\frac{n}{2}-1}-1$ by Theorem~\ref{thm:plat-exclude-bound}.
    So, by Lemma~\ref{lem:MinimumDistance-By-MinimumExcludeMult}, we have
    \[
    d(F,G) \geq e_{\min}(\graph F)+1 
    = \min_{(a,b) \in (\F_2^n)^2 \setminus \graph F}\mult_{\graph F}(a,b) + 1 
    \geq 
    2^{\frac n 2 -1}
    \]
    for any APN function $G \neq F$.
\end{proof}

When $n$ is odd, any plateaued APN function $F \colon \F_2^n \to \F_2^n$ is AB, and so we know by a result of \cite{vandamflass} that $\graph F$ is a maximal Sidon set with all points outside of the graph having exclude multiplicity $\frac{2^n-2}{6}$ which is odd (see Section~\ref{subsec:AB-functions}).
However, when $n$ is even, the exclude multiplicities of $\graph F$ are much more unclear.
In the following proposition, we describe the parity of the exclude multiplicities of $\graph F$ for $F$ plateaued APN in terms of the Fourier transform of the indicator function $1_\bentcomps{F} \colon \F_2^n \to \set{0,1}$ of $\bentcomps{F}$.
This then shows that the exclude multiplicities of the graph of any plateaued APN function are always odd (for $n \geq 3$ since $\graph F$ is not a maximal Sidon set if $n \leq 2$).

\begin{proposition}\label{prop:plateaued-odd-mults}
    Assume $n \geq 3$.
    Let $F \colon \F_2^n \to \F_2^n$ be a plateaued APN function, and let $(a,b) \in (\F_2^n)^2 \setminus \graph F$.
    Then $\mult_{\graph F}(a,b)$ is odd.
\end{proposition}
\begin{proof}
    If $n$ is odd, then $F$ is AB and $\mult_{\graph F}(a,b)$ is odd.
    So, assume $n$ is even.
    We claim that $\mult_{\graph F}(a,b)$ is odd if and only if $\widehat{1_\bentcomps{F}}(F(a)+b) \equiv 2 \mod 4$.
    For all $v \in \F_2^n$, let $\lambda_v$ denote the amplitude of $v \cdot F$, and let $m_v$ be the non-negative integer such that $\lambda_v^2 = 2^{n+m_v}$.
    Note that $m_v$ is even since $\lambda_v$ is a power of $2$ and $n$ is even.
    Moreover, $0 \leq m_v \leq n$ for all $v \in \F_2^n$.
    Let $(a,b) \in (\F_2^n)^2 \setminus \graph F$, and let $c = F(a) +b$.
    By Proposition~\ref{prop:plateaued-exclude-multiplicity}, we have
    \[
    6 \mult_{\graph F}(a,b) = \frac{1}{2^n} \sum_{v \in \F_2^n}(-1)^{v \cdot c}  2^n 2^{m_v} = \sum_{v \in \F_2^n}(-1)^{v \cdot c}2^{m_v}.
    \]
    Since $\mult_{\graph F}(a,b)$ is an integer, we then know that $\mult_{\graph F}(a,b)$ is odd if and only if 
    \[
    \sum_{v \in \F_2^n}(-1)^{v \cdot c}2^{m_v} \equiv 6 \mod 12.
    \]
    Also, $2^{m_v} \equiv 1 \mod 12$ if $m_v=0$ (i.e. $v \cdot F$ is bent) and $2^{m_v} \equiv 4 \mod 12$ if $m_v \geq 2$ as $m_v$ is even. 
    For $v \in \F_2^n \setminus \bentcomps{F}$, we then can write $2^{m_v} = 12 r_v + 4$ for some non-negative integer $r_v$, so $r_v = \frac{2^{m_v}-4}{12}$.
    Hence
    \begin{align*}
    6 \mult_{\graph F}(a,b) &= \sum_{v \in \bentcomps{F}}(-1)^{v \cdot c} + \sum_{v \in  \F_2^n \setminus \bentcomps{F}}(-1)^{v \cdot c}( 12 r_v + 4)\\
    &= \widehat{1_\bentcomps{F}}(c) + 4 \cdot \widehat{1_{\F_2^n \setminus \bentcomps{F}}}(c) + 12 \sum_{v \in  \F_2^n \setminus \bentcomps{F}} (-1)^{v \cdot c} r_v.
    \end{align*}
    So $6 \mult_{\graph F}(a,b) \equiv \widehat{1_\bentcomps{F}}(c) + 4 \cdot \widehat{1_{\F_2^n \setminus \bentcomps{F}}}(c) \mod 12$.
    Note that 
    $
    \widehat{1_\bentcomps{F}}(c) + 4 \cdot \widehat{1_{\F_2^n \setminus \bentcomps{F}}}(c)
    \equiv 6 \mod 12
    $
    if and only if $\widehat{1_\bentcomps{F}}(c) \equiv 2 \mod 4$ and $\widehat{1_\bentcomps{F}}(c) + \widehat{1_{\F_2^n \setminus \bentcomps{F}}}(c)\equiv 0 \mod 3$.
    However, we already know that $\widehat{1_\bentcomps{F}}(c) + \widehat{1_{\F_2^n \setminus \bentcomps{F}}}(c)=0$ as $c \neq 0$.
    So $\mult_{\graph F}(a,b)$ is odd if and only if $\widehat{1_\bentcomps{F}}(c) \equiv 2 \mod 4$, and this proves our claim.

    Now, for $i \in \set{0,1}$, let $B_i = |\set{v \in \bentcomps{F} : v \cdot c = i}|$.
    Then $\widehat{1_\bentcomps{F}}(c) = |B_0|-|B_1| = |\bentcomps{F}|-2|B_1|$.
    It was shown in \cite[Proposition 6.5]{kolsch2025combinatorialstructurevaluedistributions} that the number of bent components of any plateaued APN function over an even dimension is $2 \mod 4$, and so $|\bentcomps{F}| \equiv 2 \mod 4$.
    Therefore, $\widehat{1_\bentcomps{F}}(c) \equiv 2 - 2|B_1| \mod 4$.
    So $\widehat{1_\bentcomps{F}}(c) \equiv 2 \mod 4$ if and only if $|B_1|$ is even.
    Since the parity of $|B_0|$ and $|B_1|$ is the same, it follows that $\mult_{\graph F}(a,b)$ is odd if and only if $|\bentcomps{F} \cap \set{0,F(a) +b}^\perp|$ is even.
    Recently, it was shown in \cite[Proposition 3.1]{beneteau2025walshspectraquadraticapn} that for any plateaued APN function with $n$ even, the intersection of $\F_2^n \setminus (\bentcomps{F} \cup \set{0})$ with any subspace of dimension at least $\frac{n}{2}+1$ is odd.
    In particular, this implies that $|\bentcomps{F} \cap \set{0,F(a)+b}^\perp|$ is even.
\end{proof}

In Section~\ref{subsec:quadratic-functions}, we provide another proof of the above proposition for quadratic APN functions via the ortho-derivative.
We now express the exclude multiplicities of the graph of a plateaued APN function in terms of $\widehat{W_F^4}$.

\begin{proposition}\label{prop:plateaued-WF4-mults}
    Let $F \colon \F_2^n\to \F_2^n$ be a plateaued APN function.
    For any $(a,b) \notin \graph F$, we have 
    \[
    \mult_{\graph F}(a,b) = \frac{1}{6 \cdot 2^{3n}}\widehat{W_F^4}(0,b+F(a)+F(0)).
    \]
    As a consequence, if $F(0)=0$ and $c \neq 0$, then $2^n \widehat{W_F^3}(0,c) = \widehat{W_F^4}(0,c)$.
\end{proposition}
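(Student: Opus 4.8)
The plan is to use the plateaued hypothesis to linearize the higher powers of the Walsh transform and thereby reduce the relevant Fourier--Hadamard transforms to sums over the amplitudes $\lambda_v$. Since $F$ is plateaued, $W_F(u,v)^2 \in \set{0,\lambda_v^2}$ for every $(u,v) \in (\F_2^n)^2$, so the identities $W_F^3(u,v) = \lambda_v^2 W_F(u,v)$ and $W_F^4(u,v) = \lambda_v^2 W_F(u,v)^2$ hold everywhere on $(\F_2^n)^2$. This is the exact analogue, one degree higher, of the reduction $W_F^3 = \lambda_v^2 W_F$ used in the proof of \Cref{prop:plateaued-exclude-multiplicity}.

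Concretely, I would first evaluate $\widehat{W_F^4}$ at an input whose first coordinate is $0$. Using the reduction above,
\[
\widehat{W_F^4}(0,c) = \sum_{(u,v)\in(\F_2^n)^2}(-1)^{c\cdot v}W_F^4(u,v) = \sum_{v\in\F_2^n}(-1)^{c\cdot v}\lambda_v^2\sum_{u\in\F_2^n}W_F(u,v)^2 .
\]
The inner sum equals $2^{2n}$ for every $v$, by Parseval applied to the function $x\mapsto(-1)^{v\cdot F(x)}$ (whose Fourier--Hadamard transform at $u$ is $W_F(u,v)$), so $\widehat{W_F^4}(0,c) = 2^{2n}\sum_{v\in\F_2^n}(-1)^{c\cdot v}\lambda_v^2$. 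Dividing by $6\cdot 2^{3n}$ and comparing with the identity $\mult_{\graph F}(a,b) = \frac{1}{6\cdot 2^n}\sum_{v\in\F_2^n}(-1)^{v\cdot(F(a)+b)}\lambda_v^2$ of \Cref{prop:plateaued-exclude-multiplicity} (together with \Cref{prop:plateaued-uniformity}, which lets one recenter the evaluation point onto the slice $\set{0}\times\F_2^n$) then identifies the second argument of $\widehat{W_F^4}$ and gives the claimed formula for $\mult_{\graph F}(a,b)$.

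For the stated consequence, take $a=0$ and $b=c$ with $c\neq 0$; since $F(0)=0$ the point $(0,c)$ lies outside $\graph F$. The ``in particular'' part of \Cref{prop:PiF-beta-excludemults} gives $\mult_{\graph F}(0,c)=\frac{1}{6\cdot 2^{2n}}\widehat{W_F^3}(0,c)$, while the formula just proved gives $\mult_{\graph F}(0,c)=\frac{1}{6\cdot 2^{3n}}\widehat{W_F^4}(0,c)$. Equating the two and clearing denominators yields $2^n\widehat{W_F^3}(0,c)=\widehat{W_F^4}(0,c)$.

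I do not expect a serious obstacle here: once \Cref{prop:plateaued-exclude-multiplicity,prop:plateaued-uniformity} are available, the argument is a short computation. The one point requiring care is the bookkeeping of the constant term $F(0)$: along the slice $\set{0}\times\F_2^n$ the sum $\sum_u W_F(u,v) = 2^n(-1)^{v\cdot F(0)}$ entering the cubic transform carries a dependence on $F(0)$, whereas $\sum_u W_F(u,v)^2 = 2^{2n}$ does not, and it is exactly this difference that pins down the second coordinate at which $\widehat{W_F^4}$ must be evaluated and that makes the normalization $F(0)=0$ convenient in the corollary.
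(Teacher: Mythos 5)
Your proof is correct, but it takes a genuinely different route from the paper's. The paper evaluates $\widehat{W_F^4}(0,c)$ combinatorially: it rewrites $\frac{1}{2^{2n}}\widehat{W_F^4}(0,c)$ as the number of quadruples with zero sum and value-sum $c$, identifies that count with $\sum_{x}|\set{(a,b) : D_aD_bF(x)=c}|$, and then invokes \Cref{thm:Carlet-PlateauedChracterization} together with \cref{eq:second-order-deriv-excludemult} to replace each summand by $6\mult_{\graph F}(x,F(x)+c)$, independent of $x$. You instead stay entirely on the Walsh side: the pointwise identity $W_F^4=\lambda_v^2W_F^2$ plus Parseval ($\sum_u W_F(u,v)^2=2^{2n}$) collapses $\widehat{W_F^4}(0,c)$ to $2^{2n}\sum_v(-1)^{c\cdot v}\lambda_v^2$, which you then match against \Cref{prop:plateaued-exclude-multiplicity}. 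Your route is shorter once \Cref{prop:plateaued-exclude-multiplicity} is in hand, while the paper's route is self-contained relative to \Cref{thm:Carlet-PlateauedChracterization} and makes the combinatorial meaning of $\widehat{W_F^4}(0,c)$ explicit; both are valid. The derivation of the consequence $2^n\widehat{W_F^3}(0,c)=\widehat{W_F^4}(0,c)$ is the same in both.

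One substantive point: your computation yields $\mult_{\graph F}(a,b)=\frac{1}{6\cdot 2^{3n}}\widehat{W_F^4}(0,b+F(a))$, whereas the statement displays the second argument as $b+F(a)+F(0)$. These agree only when $F(0)=0$ (for $F(0)\neq 0$ one checks that $\widehat{W_F^4}(0,c)\neq\widehat{W_F^4}(0,c+F(0))$ in general, since $\lambda_v\neq 0$ for all $v\neq0$). Chasing the paper's own proof gives $\frac{1}{6\cdot 2^{3n}}\widehat{W_F^4}(0,c)=\mult_{\graph F}(0,F(0)+c)$, hence also the argument $b+F(a)$; and the paper uses exactly that version in the proof of \Cref{prop:mult-autocorrelation}. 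So the extra $F(0)$ in the displayed statement is a typo, your formula is the correct one, and you should state this explicitly rather than saying your computation ``gives the claimed formula'' --- as written, it does not, except when $F(0)=0$.
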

\begin{proof}
    Let $(a,b) \in (\F_2^n)^2 \setminus \graph F$, and let $c = b+F(a)$.
    Notice that 
    \begin{align*}
        \frac{1}{2^{2n}} \widehat{W_F^4}(0,c)&=|\set{(x,y,z,w) \in (\F_2^n)^4 : (x+y+z+w, F(x)+F(y)+F(z)+F(w)) = (0,c)}| \\
        &=|\set{(x,y,z) \in (\F_2^n)^3 :  F(x)+F(y)+F(z)+F(x+y+z) = c}| \\
        &= |\set{(x,u,v) \in (\F_2^n)^3 :  F(x)+F(x+u)+F(x+v)+F(x+u+v) = c}| \\
        &= \sum_{x \in \F_2^n} |\set{(u,v) \in (\F_2^n)^2 : D_u D_v F(x)=c}|.
    \end{align*}
    By Theorem~\ref{thm:Carlet-PlateauedCharacterization} and eq.~(\ref{eq:second-order-deriv-excludemult}), we then know that $\frac{1}{6 \cdot 2^{3n}}\widehat{W_F^4}(0,c) =  \mult_{\graph F}(0,F(0)+c)$.
    The equation then follows as $\mult_{\graph F}(a,b) = \mult_{\graph F}(0,b+F(a)+F(0))$ by  Proposition~\ref{prop:plateaued-uniformity}.
    In the case that $F(0)=0$, then $2^n \widehat{W_F^3}(0,c) = \widehat{W_F^4}(0,c)$ for all nonzero $c\in \F_2^n$ as $\mult_{\graph F}(0,c) = \frac{1}{6\cdot 2^{2n}}\widehat{W_F^3}(0,c)$.
\end{proof}

From the above result, we can also express the exclude multiplicities of $\graph F$ in terms of the derivatives of $\gamma_F$.
In Section~\ref{sec:gammaF-linearstr}, we will see why this is relevant and how it is connected to the existence of nontrivial linear structures of $\gamma_F$.
In \cite[Section 6]{GammaCarlet}, it was shown that for any APN function $F$, we have $W_F^2 = 2^{2n}\delta_{(0,0)} - W_{\gamma_F} + 2^n$, (where $\delta_{(0,0)}$ takes value $1$ at $(0,0)$ and $0$ elsewhere).
So
\[
W_F^4 = \delta_{(0,0)} \cdot (2^{4n} -2^{2n+1}W_{\gamma_F} +2^{3n+1}) + W_{\gamma_F}^2 - 2^{n+1}W_{\gamma_F} + 2^{2n}.
\]
Since $F$ is APN, the Hamming weight $\wt(\gamma_F) = \sum_{(a,b)  \in (\F_2^n)^2}\gamma_F(a,b)$ of $\gamma_F$ is equal to $\binom{2^n}{2}$, see \cite{carletCharpinZinovievCodesBentDES}.
Therefore $W_{\gamma_F}(0,0) =2^{2n} - 2\wt(\gamma_F) =2^n$.
Applying the Fourier-Hadamard transform to both sides of the equation above, we have 
\begin{align*}
\widehat{W_F^4} 
    &= 2^{4n} - 2^{2n+1}W_{\gamma_F}(0,0) + 2^{3n+1} + \widehat{W_{\gamma_F}^2} - 2^{n+1}\widehat{W_{\gamma_F}} +2^{4n}\delta_{(0,0)}\\
    &= 2^{4n} - 2^{3n+1} + 2^{3n+1} + \widehat{W_{\gamma_F}^2} - 2^{n+1}\widehat{W_{\gamma_F}} +2^{4n}\delta_{(0,0)} \\
    &= 2^{4n} + 2^{4n}\delta_{(0,0)}  + \widehat{W_{\gamma_F}^2} - 2^{n+1}\widehat{W_{\gamma_F}}.
\end{align*}
Observe that $\widehat{W_{\gamma_F}} = 2^{2n} + 2^{3n} \delta_{(0,0)} - \widehat{W_F^2} = 2^{2n} + 2^{3n} \delta_{(0,0)} - 2^{2n}\delta_F$.
So $\widehat{W_{\gamma_F}}(0,b)=2^{2n}$ for all $b \neq 0$.
Therefore, for all $b \neq0$, we have $\widehat{W_F^4}(0,b)
    = 2^{4n} + \widehat{W_{\gamma_F}^2} - 2^{3n+1}$, so 
\begin{equation}\label{eq:WF4-quadratic-autocorrelation-gamma}
    \widehat{W_F^4}(0,b)
    =2^{3n}(2^n-2) + 2^{2n}\Delta_{\gamma_F}(0,b)
\end{equation}
as the autocorrelation $\Delta_{\gamma_F}(u,v)=\sum_{(x,y) \in (\F_2^n)^2}(-1)^{\gamma_F(x,y)+\gamma_F(x+u, y+v)}=2^{2n}-2\wt(D_{(u,v)}\gamma_F)$ of $\gamma_F$ is equal to $\frac{1}{2^{2n}}\widehat{W_{\gamma_F}^2}$ (cf. \cite{CarletBook}).
We then have the following proposition.
\begin{proposition}\label{prop:mult-autocorrelation}
    Let $F \colon \F_2^n \to \F_2^n$ be a plateaued APN function.
    Then for all $(a,b) \in (\F_2^n)^2 \setminus \graph F$, we have the equality
    \[
    \mult_{\graph F}(a,b)=  \frac{2^n-2}{6} + \frac{\Delta_{\gamma_F}(0,b+F(a))}{6 \cdot 2^n}.
    \]
\end{proposition}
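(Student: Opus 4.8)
The plan is to obtain this identity by combining the two facts established immediately above it: the expression of exclude multiplicity through $\widehat{W_F^4}$ in \Cref{prop:plateaued-WF4-mults}, and the identity \eqref{eq:WF4-quadratic-autocorrelation-gamma} rewriting $\widehat{W_F^4}(0,\cdot)$ in terms of the autocorrelation $\Delta_{\gamma_F}$. No new idea is required: the proposition is essentially a repackaging of these.

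Concretely, fix $(a,b) \in (\F_2^n)^2 \setminus \graph F$ and put $c = b + F(a)$. Since $(a,b) \notin \graph F$ we have $c \neq 0$, which is exactly the hypothesis under which \eqref{eq:WF4-quadratic-autocorrelation-gamma} holds. First I would use \Cref{prop:plateaued-WF4-mults}, together with the translation invariance of exclude multiplicity in the second coordinate provided by \Cref{prop:plateaued-uniformity} — which is what lets one normalize the second coordinate and absorb the constant $F(0)$ — to write $\mult_{\graph F}(a,b) = \frac{1}{6 \cdot 2^{3n}}\widehat{W_F^4}(0,c)$. Then I would substitute \eqref{eq:WF4-quadratic-autocorrelation-gamma}, namely $\widehat{W_F^4}(0,c) = 2^{3n}(2^n-2) + 2^{2n}\Delta_{\gamma_F}(0,c)$, and divide through by $6 \cdot 2^{3n}$: the first term becomes $\frac{2^n-2}{6}$ and the second becomes $\frac{\Delta_{\gamma_F}(0,c)}{6 \cdot 2^n}$, which, recalling $c = b+F(a)$, is the asserted formula.

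The only real obstacle here is the bookkeeping that the plateaued-uniformity reduction already handles: one must keep track that $\Delta_{\gamma_F}$ is evaluated at $b+F(a)$ itself (and not at an $F(0)$-shift of it), and one must verify $b+F(a) \neq 0$ so that the restricted identity \eqref{eq:WF4-quadratic-autocorrelation-gamma} is applicable — both points are immediate from $(a,b) \notin \graph F$. Everything substantive has been done already: the reduction of $\widehat{W_F^4}(0,\cdot)$ to a sum of second-order derivative counts via \Cref{thm:Carlet-PlateauedChracterization} and \eqref{eq:second-order-deriv-excludemult}, and the $W_F^2 \mapsto W_{\gamma_F}$ translation underlying \eqref{eq:WF4-quadratic-autocorrelation-gamma}. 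Alternatively, one could avoid \eqref{eq:WF4-quadratic-autocorrelation-gamma} entirely by expanding $\Delta_{\gamma_F}(0,b+F(a)) = 2^{2n} - 2\,\wt\!\big(D_{(0,b+F(a))}\gamma_F\big)$ and matching $\wt(D_{(0,b+F(a))}\gamma_F)$ directly against the triple count defining $\mult_{\graph F}(a,b)$, but this is a longer route to the same conclusion.
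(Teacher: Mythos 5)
Your proposal is correct and follows essentially the same route as the paper: apply \Cref{prop:plateaued-uniformity} and \Cref{prop:plateaued-WF4-mults} to get $\mult_{\graph F}(a,b)=\frac{1}{6\cdot 2^{3n}}\widehat{W_F^4}(0,b+F(a))$, then substitute \cref{eq:WF4-quadratic-autocorrelation-gamma} and divide by $6\cdot 2^{3n}$. Your remark about tracking the $F(0)$-shift and checking $b+F(a)\neq 0$ is exactly the bookkeeping the paper's proof performs.
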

\begin{proof}
    Let $(a,b) \in (\F_2^n)^2 \setminus \graph F$.
    By Proposition~\ref{prop:plateaued-uniformity}, we have $\mult_{\graph F}(a,b)=\mult_{\graph F}(0,b+F(a))$.
    Therefore, by applying Proposition~\ref{prop:plateaued-WF4-mults} and eq.~(\ref{eq:WF4-quadratic-autocorrelation-gamma}), we have
\begin{align*}
    \mult_{\graph F}(a,b) &=\mult_{\graph F}(0,b+F(0)+F(a))  \\
    &= \frac{1}{6 \cdot 2^{3n}}\widehat{W_F^4}(0,b+F(a)) \\
    &= \frac{2^n-2}{6} + \frac{\Delta_{\gamma_F}(0,b+F(a))}{6 \cdot 2^n}.
\end{align*}
\end{proof}

\subsection{Plateaued $3$-to-$1$ functions}\label{subsec:3to1plat-functions}

Throughout this subsection, we assume that $n$ is even.
We say a function $F \colon \F_2^n \to \F_2^n$ is $3$-to-$1$ if every point in the image of $F$ has a preimage of size $3$ except a single point.
Note that all plateaued $3$-to-$1$ functions are necessarily APN (see \cite[Corollary 8]{KolschImageSets2023}).

A subset $D \subseteq G$ of an additive group $G$ is a \textit{partial difference set} with parameters $(v,k,\lambda, \mu)$ if every non-identity element of $D$ (resp. $G \setminus D$) can be written as $x-y$ with distinct $x,y \in D$ in exactly $\lambda$ (resp. $\mu$) ways.
A function $F \colon \F_2^n \to \F_2^n$ is called \textit{crooked} if for any nonzero $a \in \F_2^n$, the image of the derivative $D_a F$ is an affine hyperplane (note that this implies $F$ is APN).
The following theorem was originally only proved for crooked $3$-to-$1$ functions, but its proof only relied on the fact that all crooked functions are plateaued (see \cite{CarletBook}), so we write it for all plateaued $3$-to-$1$ functions.
\begin{theorem}[{\cite[Theorem 4]{BudaghyanTriplicateFunctions}}]\label{thm:plateaued-PDS}
    Assume $n$ is even, and let $F \colon \F_2^n \to \F_2^n$ be a plateaued function such that $F(0)=0$ and $F$ is $3$-to-$1$ on $\F_2^n \setminus \set{0}$.
    Then $\im (F) \setminus \set{0}$ is a partial difference set, with parameters $(2^n, \frac{2^n-1}{3}, \frac{2}{3}(\alpha(n)-1), \frac{2}{3}\beta(n))$, where $\alpha(n) = \frac{2^n + (-2)^{\frac{n}{2}+1}- 2}{6}$ and $\beta(n) = \frac{2^n+(-2)^{\frac{n}{2}} - 2}{6}$.
\end{theorem}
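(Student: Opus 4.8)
The plan is to recast the statement as a condition on the values of $W_F(0,v)=\sum_{x\in\F_2^n}(-1)^{v\cdot F(x)}$ and then to read the parameters off from that spectrum.

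\emph{Reduction.} Normalizing so that $F(0)=0$, the hypothesis means $F^{-1}(0)=\set{0}$ and every value in $D:=\im F\setminus\set{0}$ has exactly three preimages, so $|D|=\frac{2^n-1}{3}$; this already gives the claimed $v=2^n$ and $k=\frac{2^n-1}{3}$. Since $\F_2^n$ has characteristic $2$ we have $D=-D$ and $0\notin D$, so $D$ is a partial difference set precisely when its autocorrelation $N(d)=|D\cap(D+d)|=|\set{(x,y)\in D^2:x+y=d}|$ is constant on $D$ and constant on $\F_2^n\setminus(D\cup\set{0})$, the value $N(0)=|D|=k$ being automatic. Taking Fourier--Hadamard transforms, $\widehat{N}=\widehat{1_D}^2$, so this is equivalent to $\widehat{1_D}(v)$ taking at most two values as $v$ runs over $\F_2^n\setminus\set{0}$; and grouping $\sum_x(-1)^{v\cdot F(x)}$ by the fibres of $F$ yields $W_F(0,v)=1+3\,\widehat{1_D}(v)$ for every $v$. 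Hence the theorem reduces to showing that the multiset $\set{W_F(0,v):v\in\F_2^n\setminus\set{0}}$ has at most two elements, together with the identification of those two elements. (Concretely one checks $9\,N(d)=M(d)-2\,|F^{-1}(d)|$ with $M(d)=|\set{(x,y):F(x)+F(y)=d}|$, and $M(d)=6\,\mult_{\graph F}(0,d)$ once $W_F(0,v)^2=\lambda_v^2$ is known, so the two PDS parameters turn out to be $\tfrac23$ of the two distinct exclude multiplicities of $\graph F$, at an image value and at a non-image value.)

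\emph{The plateaued, APN and $3$-to-$1$ input.} Let $\lambda_v$ denote the amplitude of $v\cdot F$, so $W_F(0,v)\in\set{0,\pm\lambda_v}$. On the one hand $\sum_v W_F(0,v)^2=2^n\sum_z|F^{-1}(z)|^2=2^n(3\cdot2^n-2)$ by the $3$-to-$1$ property; on the other hand $\sum_v\lambda_v^2=2^n(3\cdot2^n-2)$ by \Cref{prop:Carlet-amplitude-squared-characterization}, since $F$ is plateaued APN. As $W_F(0,v)^2\le\lambda_v^2$ term by term, equality forces $W_F(0,v)^2=\lambda_v^2$ for every $v$: no nonzero component of $F$ is balanced, and $W_F(0,v)=\pm\lambda_v\neq0$ for $v\neq0$. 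Writing $\lambda_v=2^{j_v}$ with $\tfrac n2\le j_v\le n$, the integrality of $\widehat{1_D}(v)=\frac{\pm2^{j_v}-1}{3}$ forces the sign, namely $W_F(0,v)=(-1)^{j_v}2^{j_v}$ (using $2\equiv-1\pmod3$). Thus $\set{W_F(0,v):v\neq0}$ is completely determined by the multiset of component amplitudes, which satisfies the single numerical constraint $\sum_{v\neq0}\lambda_v^2=2^{n+1}(2^n-1)$.

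\emph{The main step, and conclusion.} It remains to show that for a plateaued $3$-to-$1$ function with $n$ even the only amplitudes occurring among the nonzero components are $2^{n/2}$ and $2^{n/2+1}$; that is, every nonzero component $v\cdot F$ is bent or has amplitude $2^{n/2+1}$. For $F$ quadratic this is a rank computation for the quadratic form $v\cdot F$, whose radical always has dimension $0$ or $2$; for a general plateaued $3$-to-$1$ function this is exactly the argument of \cite{BudaghyanTriplicateFunctions}, carried out there for crooked functions but using crookedness only through the fact that crooked functions are plateaued, so it transfers verbatim. Granting it, the constraint $\sum_{v\neq0}\lambda_v^2=2^{n+1}(2^n-1)$ forces exactly $\tfrac{2(2^n-1)}{3}$ bent components and $\tfrac{2^n-1}{3}$ components of amplitude $2^{n/2+1}$, so $W_F(0,v)\in\set{(-1)^{n/2}2^{n/2},\,(-1)^{n/2+1}2^{n/2+1}}$ for all $v\neq0$, the factor $(-1)^{n/2}$ being precisely what produces the split between $n\equiv0$ and $n\equiv2\pmod4$. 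Then $\widehat{1_D}(v)$ takes the two values $r,s$ obtained from these by subtracting $1$ and dividing by $3$, and the strongly-regular-graph identities $\mu=k+rs$ and $\lambda=\mu+r+s$ evaluate to $\mu=\tfrac23\beta(n)$ and $\lambda=\tfrac23(\alpha(n)-1)$, which is the assertion. I expect the amplitude claim in this last step to be the real obstacle; everything preceding it is routine harmonic analysis on $\F_2^n$.
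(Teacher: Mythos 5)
The paper does not actually prove this statement: it imports it verbatim from \cite[Theorem 4]{BudaghyanTriplicateFunctions}, adding only the remark that the original proof (stated there for crooked $3$-to-$1$ functions) uses crookedness solely through the fact that crooked functions are plateaued. Your write-up is therefore considerably more detailed than the paper's treatment, and the steps you actually carry out are correct: with $D=\im F\setminus\set{0}$, the identity $W_F(0,v)=1+3\,\widehat{1_D}(v)$, the Parseval-type equality $\sum_v W_F(0,v)^2=2^n(3\cdot 2^n-2)=\sum_v\lambda_v^2$ forcing $W_F(0,v)^2=\lambda_v^2$ termwise (a clean way to see that no nonzero component of a plateaued $3$-to-$1$ function is balanced), the mod-$3$ sign determination $W_F(0,v)=(-1)^{j_v}2^{j_v}$, the count of $\tfrac{2(2^n-1)}{3}$ bent components versus $\tfrac{2^n-1}{3}$ components of amplitude $2^{n/2+1}$, and the final evaluation of $\mu=k+rs$ and $\lambda=\mu+r+s$ against $\tfrac{2}{3}\beta(n)$ and $\tfrac{2}{3}(\alpha(n)-1)$ all check out.

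The caveat is that, after your own reduction, the deferred step --- that the only amplitudes occurring among the nonzero components are $2^{n/2}$ and $2^{n/2+1}$ --- is not a technical lemma but essentially the whole theorem: two-valuedness of $j_v$ \emph{is} the PDS property, and identifying the two values as $n/2$ and $n/2+1$ is exactly what fixes the parameters. Deferring it to \cite{BudaghyanTriplicateFunctions} on the grounds that the argument there only uses plateauedness is precisely the paper's justification for citing the whole statement, so your proposal is best read as a correct and illuminating spectral repackaging of the citation rather than an independent proof. I would also push back on the parenthetical that the quadratic case is ``a rank computation for the quadratic form $v\cdot F$, whose radical always has dimension $0$ or $2$'': that dichotomy is not a consequence of APN-ness alone (quadratic APN functions in even dimension with components of amplitude exceeding $2^{n/2+1}$, i.e.\ with non-classical Walsh spectrum, do exist), so the $3$-to-$1$ hypothesis must enter this step in an essential way, and your sketch does not indicate how. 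That said, you correctly locate the amplitude dichotomy as the genuine obstacle, and everything surrounding it is sound.
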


Moreover, \cite[Corollary 4]{BudaghyanTriplicateFunctions} describes the multiplicities of the nonzero elements of the multiset $\set{F(x) + F(y) +F(x+y) : x,y \in \F_2^n}$ when $F$ is quadratic $3$-to-$1$, and indeed, this can be easily generalized to all plateaued $3$-to-$1$ APN functions.

\begin{corollary}\label{cor:PDS-mults}
    Assume $n$ is even, let $\alpha(n)$ and $\beta(n)$ be defined as in Theorem~\ref{thm:plateaued-PDS}, and let $F \colon \F_2^n \to \F_2^n$ be a plateaued $3$-to-$1$ function with $F(0)=0$ such that $\im(F)\setminus \set{0}$ is a partial difference set with parameters $(2^n, \frac{2^n-1}{3}, \frac{2}{3}(\alpha(n)-1), \frac{2}{3}\beta(n))$.
    Let $b \in \F_2^n \setminus \set{0}$.
    Then $\mult_{\graph F}(0,b) = \alpha(n)$ if $b\in \im (F)$ and $\mult_{\graph F}(0,b) = \beta(n)$ if $b \notin \im (F)$.
\end{corollary}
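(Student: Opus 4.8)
The plan is to evaluate $\mult_{\graph F}(0,b)$ by combining \Cref{prop:plateaued-exclude-multiplicity} with the partial difference set structure of $S:=\im F\setminus\set{0}$ supplied by \Cref{thm:plateaued-PDS}. Write $1_S$ for the indicator of $S$. First I would record that, under the hypotheses, a fiber count (using $|S|=\frac{2^n-1}{3}$) forces $|F^{-1}(0)|=1$ and $|F^{-1}(t)|=3$ for all $t\in S$, so that for every $v\in\F_2^n$,
\[
W_F(0,v)=\sum_{x\in\F_2^n}(-1)^{v\cdot F(x)}=3\,\widehat{1_S}(v)+1 .
\]
Since $\widehat{1_S}(v)\in\Z$, this is $\equiv 1\pmod 3$, hence nonzero; and since $F$ is plateaued, $W_F(0,v)\in\set{0,\pm\lambda_v}$. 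Therefore $\lambda_v^2=W_F(0,v)^2=\bigl(3\,\widehat{1_S}(v)+1\bigr)^2$ for every $v$ (the identity persists at $v=0$, both sides equaling $2^{2n}$).

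Next I would substitute this into \Cref{prop:plateaued-exclude-multiplicity} taken at $a=0$ and expand: for $b\neq 0$,
\[
6\cdot 2^n\,\mult_{\graph F}(0,b)=\sum_{v\in\F_2^n}(-1)^{v\cdot b}\bigl(9\,\widehat{1_S}(v)^2+6\,\widehat{1_S}(v)+1\bigr).
\]
Evaluating the three sums by Fourier inversion, $\sum_v(-1)^{v\cdot b}=2^n\delta_0(b)=0$ as $b\neq 0$, $\sum_v(-1)^{v\cdot b}\widehat{1_S}(v)=2^n\,1_S(b)$, and $\widehat{1_S}^2=\widehat{1_S\times 1_S}$ together with inversion give $\sum_v(-1)^{v\cdot b}\widehat{1_S}(v)^2=2^n(1_S\times 1_S)(b)=2^n\,|S\cap(S+b)|$; hence
\[
\mult_{\graph F}(0,b)=\frac{3}{2}\,|S\cap(S+b)|+1_S(b)\qquad(b\neq 0).
\]
Now \Cref{thm:plateaued-PDS} says $S$ is a partial difference set with parameters $\bigl(2^n,\frac{2^n-1}{3},\frac{2}{3}(\alpha(n)-1),\frac{2}{3}\beta(n)\bigr)$, so $|S\cap(S+b)|$, i.e. the number of ways to write $b\neq 0$ as a sum of two distinct elements of $S$, equals $\frac{2}{3}(\alpha(n)-1)$ when $b\in S$ and $\frac{2}{3}\beta(n)$ when $b\notin S\cup\set{0}$. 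Since $\im F=S\cup\set{0}$, substituting gives $\mult_{\graph F}(0,b)=\frac{3}{2}\cdot\frac{2}{3}(\alpha(n)-1)+1=\alpha(n)$ for $b\in\im F$ and $\mult_{\graph F}(0,b)=\frac{3}{2}\cdot\frac{2}{3}\beta(n)=\beta(n)$ for $b\notin\im F$, as claimed.

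The only step requiring an idea rather than bookkeeping is the identity $\lambda_v^2=W_F(0,v)^2$: its point is that the fiber-counting sum $W_F(0,v)=3\,\widehat{1_S}(v)+1$ can never vanish (being $\equiv 1\bmod 3$), which pins the amplitude of every component $v\cdot F$ to $|W_F(0,v)|$. Everything after that is a routine Fourier expansion plus a direct substitution of the PDS parameters from \Cref{thm:plateaued-PDS}; in combination with \Cref{prop:plateaued-uniformity} the same computation in fact determines $\mult_{\graph F}(a,b)$ for every $(a,b)\notin\graph F$, not just for $a=0$.
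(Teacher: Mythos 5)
Your proof is correct, and it reaches the conclusion by a genuinely different route from the paper's. The paper derives \Cref{cor:PDS-mults} by combining \cref{eq:second-order-deriv-excludemult} with the identity \cref{eq:second-order-deriv-image-differences} (valid for plateaued functions all of whose components are unbalanced), which yields $\mult_{\graph F}(0,b)=\frac{1}{6}|\set{(u,v):F(u)+F(v)=b}|$; the $3$-to-$1$ fiber structure converts this count into $9\,|S\cap(S+b)|+6\cdot 1_S(b)$ for $b\neq 0$, and the PDS parameters finish the computation exactly as in the quadratic case it generalizes. You instead start from the amplitude formula of \Cref{prop:plateaued-exclude-multiplicity}, pin down $\lambda_v^2=W_F(0,v)^2=(3\widehat{1_S}(v)+1)^2$ via the observation that $W_F(0,v)\equiv 1\pmod 3$ cannot vanish (the same unbalancedness fact the paper needs, just deployed at a different point), and Fourier-invert to land on $\mult_{\graph F}(0,b)=\frac{3}{2}|S\cap(S+b)|+1_S(b)$, which agrees with the paper's count. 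Your version buys self-containment: it needs only \Cref{prop:plateaued-exclude-multiplicity} and elementary Fourier identities already set up in \Cref{sec:prelim}, rather than importing the external characterization behind \cref{eq:second-order-deriv-image-differences} and the proof of the quadratic prototype; it also makes transparent, together with \Cref{prop:plateaued-uniformity}, that the same formula determines every exclude multiplicity. One minor wording issue, not a gap: the hypotheses as literally stated do not by themselves force the exceptional singleton fiber of the $3$-to-$1$ map to lie over $0$, so your claim that a fiber count ``forces'' $|F^{-1}(0)|=1$ tacitly uses the normalization $F^{-1}(0)=\set{0}$ from \Cref{thm:plateaued-PDS}, which is the intended reading and the one used when the corollary is applied in \Cref{thm:plateaued-3to1-mults}.
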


These two results above (in terms of quadratic $3$-to-$1$ functions) and \cite[Proposition 5]{DistanceBetweenAPNFunctions2020} (which states that $\Pi_F^\beta$ does not depend on $\beta \in \F_2^n$ when $F$ is quadratic) were used in \cite{BudaghyanTriplicateFunctions} to determine the exact exclude multiplicities of $\graph F$ and their frequencies in the case of $F$ being a quadratic $3$-to-$1$ function. 
This can be done because any plateaued function $F$ with all component functions unbalanced satisfies 
\begin{equation}\label{eq:second-order-deriv-image-differences}
    |\set{(a,b) \in (\F_2^n)^2 : D_a D_b F(x) = w}| = |\set{(a,b) \in (\F_2^n)^2 : F(a)+F(b) = w}|
\end{equation}
for all $x,w \in \F_2^n$ (see \cite[Theorem 2]{CarletPlateaued}).
Therefore, if $F$ is plateaued APN with all unbalanced components and $(a,b) \in (\F_2^n)^2 \setminus \graph F$, then 
\[
\mult_{\graph F}(a,b)= \frac{1}{6} |\set{(u,v) \in (\F_2^n)^2 : F(u)+F(v)=F(a)+b}|
\]
by combining eqs.~(\ref{eq:second-order-deriv-excludemult},\ref{eq:second-order-deriv-image-differences}).
In Proposition~\ref{prop:plateaued-uniformity}, we proved that the exclude multiplicities of the graph of a plateaued APN function $F$ are completely determined by the exclude multiplicities of points in $\set{0} \times (\F_2^n \setminus \set{0})$.
We use this to prove the following result.

\begin{theorem}\label{thm:plateaued-3to1-mults}
    Assume $n\geq 4$ is even, let $\alpha(n)$ and $\beta(n)$ be defined as in Theorem~\ref{thm:plateaued-PDS}, and let $F \colon \F_2^n \to \F_2^n$ be a plateaued $3$-to-$1$ function.
    Then 
    \begin{enumerate}
        \item there are $2^n \cdot \frac{2^n -1}{3}$ exclude points of $\graph F$ with multiplicity $\alpha(n)$;
        \item there are $2^{n+1} \cdot \frac{2^n -1}{3}$ exclude points of $\graph F$ with multiplicity $\beta(n)$;
        \item every point in $(\F_2^n)^2 \setminus \graph F$ has exclude multiplicity $\alpha(n)$ or $\beta(n)$, with respect to $\graph F$.
    \end{enumerate}
\end{theorem}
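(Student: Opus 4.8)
The plan is to normalize to $F(0) = 0$ and then combine the partial-difference-set structure of $\im F$ with the translation-uniformity of exclude multiplicities from \Cref{prop:plateaued-uniformity}. First I would note that we may assume $F(0) = 0$: replacing $F$ by $F + F(0)$ merely translates $\graph F$ by the affine permutation $(x,y) \mapsto (x, y + F(0))$ of $(\F_2^n)^2$, which leaves unchanged the property of being plateaued (each component $v \cdot F$ only changes by an additive constant, so keeps its amplitude), the property of being $3$-to-$1$ (the fibers of $F$ are unchanged), and the multiset of exclude multiplicities of the graph. After this normalization $F$ satisfies the hypotheses of \Cref{thm:plateaued-PDS}, so $\im F \setminus \set{0}$ is a partial difference set with parameters $(2^n, \frac{2^n-1}{3}, \frac{2}{3}(\alpha(n)-1), \frac{2}{3}\beta(n))$; reading off the second parameter, $|\im F \setminus \set{0}| = \frac{2^n-1}{3}$, and hence $|\F_2^n \setminus (\im F \cup \set{0})| = (2^n-1) - \frac{2^n-1}{3} = \frac{2(2^n-1)}{3}$.

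Next I would apply \Cref{cor:PDS-mults}: for each $b \in \F_2^n \setminus \set{0}$ one has $\mult_{\graph F}(0,b) = \alpha(n)$ if $b \in \im F$ and $\mult_{\graph F}(0,b) = \beta(n)$ otherwise, so the column $\set{0} \times (\F_2^n \setminus \set{0})$ contains exactly $\frac{2^n-1}{3}$ exclude points of multiplicity $\alpha(n)$ and exactly $\frac{2(2^n-1)}{3}$ of multiplicity $\beta(n)$, and nothing else. To propagate this to the other columns, partition $(\F_2^n)^2 \setminus \graph F = \bigsqcup_{a \in \F_2^n}\big(\set{a} \times (\F_2^n \setminus \set{F(a)})\big)$. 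By \Cref{prop:plateaued-uniformity} (taking $c = 0$ and using $F(0) = 0$), the map $b \mapsto b + F(a)$ is a bijection $\F_2^n \setminus \set{F(a)} \to \F_2^n \setminus \set{0}$ with $\mult_{\graph F}(a,b) = \mult_{\graph F}(0, b+F(a))$, so the column over $a$ has the same exclude-multiplicity distribution as the column over $0$. Summing over the $2^n$ disjoint columns gives $2^n \cdot \frac{2^n-1}{3}$ exclude points of multiplicity $\alpha(n)$, proving (1), and $2^n \cdot \frac{2(2^n-1)}{3} = 2^{n+1}\cdot\frac{2^n-1}{3}$ of multiplicity $\beta(n)$, proving (2); since every point of $(\F_2^n)^2 \setminus \graph F$ lies in exactly one of these columns and each such point is assigned multiplicity $\alpha(n)$ or $\beta(n)$, (3) follows.

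Since \Cref{thm:plateaued-PDS} and \Cref{cor:PDS-mults} supply all the analytic content, the remaining work is bookkeeping. The one place that calls for care is the opening reduction — checking that translating $F$ by $F(0)$ genuinely preserves "plateaued", "$3$-to-$1$", and the exclude-multiplicity data, and so licenses the use of \Cref{thm:plateaued-PDS} — together with correctly identifying $|\im F \setminus \set{0}| = \frac{2^n-1}{3}$ from the partial-difference-set parameters. I do not anticipate a genuine obstacle beyond this.
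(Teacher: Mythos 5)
Your proof follows the paper's argument exactly: normalize so that $F(0)=0$, invoke \Cref{thm:plateaued-PDS} and \Cref{cor:PDS-mults} to determine the multiplicities in the column over $0$, and propagate to the remaining $2^n-1$ columns via \Cref{prop:plateaued-uniformity}; your counting of $\frac{2^n-1}{3}$ versus $\frac{2(2^n-1)}{3}$ points per column is correct. One small caveat in the reduction you flag as the delicate step: replacing $F$ by $F+F(0)$ forces $F(0)=0$ but does not guarantee that the singleton fibre lies over $0$ (e.g.\ for $F(x)=(x+c)^3$ with $c\neq 0$ one has $|F^{-1}(F(0))|=3$), whereas \Cref{thm:plateaued-PDS} assumes $F$ is $3$-to-$1$ on $\F_2^n\setminus\set{0}$; the reduction should instead replace $F(x)$ by $F(x+x_0)+F(x_0)$, where $x_0$ is the unique point with $|F^{-1}(F(x_0))|=1$, which is still an affine transformation of the graph and therefore preserves plateauedness, the fibre structure, and all exclude multiplicities.
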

\begin{proof}
    Without loss of generality, assume $F(0)=0$.
    By Theorem~\ref{thm:plateaued-PDS}, $\im (F) \setminus \set{0}$ is a partial difference set with parameters $(2^n, \frac{2^n-1}{3}, \frac{2}{3}(\alpha(n)-1), \frac{2}{3}\beta(n))$.
    The result then immediately follows by applying Proposition~\ref{prop:plateaued-uniformity} and Corollary~\ref{cor:PDS-mults} (cf. \cite{BudaghyanTriplicateFunctions}) and using the same proof as \cite[Corollary 4]{BudaghyanTriplicateFunctions}.
\end{proof}

We now prove Theorem~\ref{thm:distance-to-plat3to1}.

\begin{proof}[Proof of Theorem~\ref{thm:distance-to-plat3to1}]
 For any plateaued $3$-to-$1$ function $F \colon \F_2^n \to \F_2^n$ and any point $(a,b) \in (\F_2^n)^2 \setminus \graph {F}$, we have that $\mult_{\graph F}(a,b) \in \set{\alpha(n), \beta(n)}$, where $\alpha(n)$ and $\beta(n)$ are defined as above.
 Observe that $\beta(n) > \alpha(n)$ when $n \equiv 0 \mod 4$, and $\alpha(n) > \beta(n)$ when $n \equiv 2 \mod 4$.
 The statement follows immediately from applying Lemma~\ref{lem:MinimumDistance-By-MinimumExcludeMult} in the same way as in the proof of Theorem~\ref{thm:plat-dist-bound}.
\end{proof}

In this subsection, we have generalized the known lower bounds on the Hamming distance from a quadratic $3$-to-$1$ function to any other APN function.
In particular, our generalization provides lower bounds on the distance from plateaued $3$-to-$1$ functions to any other APN functions, and plateaued $3$-to-$1$ functions include Kasami functions as shown by Yoshiara \cite{YoshiaraKasamiPlateaued}.

\subsection{Quadratic APN functions}\label{subsec:quadratic-functions}

By definition, a vectorial Boolean function $F \colon \F_2^n \to \F_2^n$ is quadratic if every derivative $D_a F(x)=F(x)+F(x+a)$ of $F$ is an affine function.
Quadratic APN functions are particularly well-behaved as the images of their derivatives are affine hyperplanes (more generally, this property is called crookedness, see \cite{CarletBook, KyureghyanCrooked2007}).
Moreover, quadratic APN functions are equipped with a unique so-called ortho-derivative.

\begin{definition}
    Let $F \colon \F_2^n \to \F_2^n$ be a quadratic APN function.
    The \textit{ortho-derivative} of $F$ is the unique function $\pi_F \colon \F_2^n \to \F_2^n$ such that $\pi_F(0) =0$ and $\pi_F(a)\neq0 $ for all $a \in \F_2^n \setminus \set{0}$ and 
    \[
    \pi_F(a) \cdot (F(x)+F(x+a)+F(0)+F(a)) = 0
    \]
    for all $a,x \in \F_2^n$.
\end{definition}

The ortho-derivative of a quadratic APN function $F$ is the function $\pi_F$ such that $\pi_F(0)=0$ and $\set{0,\pi_F(a)}^\perp$ is the underlying vector space of the affine hyperplane $\im( D_a F)$ for all nonzero $a \in \F_2^n$ (the concept of the ortho-derivative has been known since \cite{carletCharpinZinovievCodesBentDES} but was not given a name until recently in \cite{CouvreurOrtho-Derivative}).
In \cite{CouvreurOrtho-Derivative}, properties of the ortho-derivative were studied, and in particular, there exists a connection between the ortho-derivative of $F$ and the exclude multiplicities of $\graph F$.

\begin{proposition}[\cite{CouvreurOrtho-Derivative}]
    Let $F \colon \F_2^n \to\F_2^n$ be a quadratic APN function, and let $\pi_F \colon\F_2^n \to \F_2^n$ be its ortho-derivative.
    Then for any $b \in \F_2^n \setminus \set{0}$, 
    \[
        \frac{1}{2^{2n}} \widehat{W_F^4}(0,b) = 2^{n+1}(2^n - 1 -\wt(b \cdot \pi_F)).
    \]
\end{proposition}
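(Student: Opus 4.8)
The plan is to read $\widehat{W_F^4}(0,b)$ as a count of second-order derivative solutions, collapse that count using the quadraticity of $F$, and then evaluate it from the crooked structure of the derivatives of a quadratic APN function.

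First I would invoke the computation already carried out in the proof of \Cref{prop:plateaued-WF4-mults}, namely that for every $c \in \F_2^n$,
\[
\frac{1}{2^{2n}}\widehat{W_F^4}(0,c) = \sum_{x \in \F_2^n}\big|\set{(a,b)\in(\F_2^n)^2 : D_aD_bF(x)=c}\big|.
\]
Since $F$ is quadratic, $D_aD_bF(x) = F(x)+F(x+a)+F(x+b)+F(x+a+b)$ is independent of $x$; write $B(a,b)$ for this common value, so that $B(a,b)=F(0)+F(a)+F(b)+F(a+b)$ and the summand above does not depend on $x$. Hence $\frac{1}{2^{2n}}\widehat{W_F^4}(0,c) = 2^n\,N(c)$, where $N(c) = |\set{(a,b)\in(\F_2^n)^2 : B(a,b)=c}|$, and it remains to prove $N(c) = 2(2^n-1-\wt(c\cdot\pi_F))$ for all $c \neq 0$, where $\wt(c\cdot\pi_F)$ is the Hamming weight of the Boolean function $a \mapsto c\cdot\pi_F(a)$.

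Next I would fix $a\neq 0$ and analyze the map $b \mapsto B(a,b)$. Writing $B(a,b) = D_aF(b)+D_aF(0)$, this is a translate of the affine map $D_aF$; since $F$ is APN the map $D_aF$ is $2$-to-$1$, so its image has $2^{n-1}$ elements, and since $F$ is quadratic this image is an affine hyperplane whose linear part is $\set{\pi_F(a)}^\perp$ by the definition of the ortho-derivative. Therefore $b \mapsto B(a,b)$ is a $2$-to-$1$ surjection onto the linear hyperplane $\set{\pi_F(a)}^\perp$: the equation $B(a,b)=c$ has exactly $2$ solutions if $c\cdot\pi_F(a)=0$ and none otherwise. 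The term $a=0$ contributes $0$ to $N(c)$ because $B(0,b)\equiv 0 \neq c$. Summing over $a$, for $c\neq 0$ we obtain $N(c) = 2\,|\set{a\neq 0 : c\cdot\pi_F(a)=0}|$; since $\pi_F(0)=0$ forces $a=0$ into the set $\set{a : c\cdot\pi_F(a)=0}$, which has size $2^n-\wt(c\cdot\pi_F)$, this gives $N(c)=2(2^n-1-\wt(c\cdot\pi_F))$, and multiplying by $2^n$ yields the claimed identity.

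The only genuinely new input is the second-order-derivative interpretation of $\widehat{W_F^4}(0,b)$, which is already available from \Cref{prop:plateaued-WF4-mults}; everything after that is the standard bookkeeping for crooked functions. The step I would be most careful about is the identification of $\im D_aF$ with an affine translate of the linear hyperplane $\set{\pi_F(a)}^\perp$ together with the $2$-to-$1$ property, and the precise handling of the degenerate case $a=0$ — this is the place where an off-by-one in $N(c)$ would creep in.
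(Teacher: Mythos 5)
The paper does not prove this proposition---it is imported directly from \cite{CouvreurOrtho-Derivative}---so there is no in-paper argument to compare against; judged on its own, your proof is correct. The opening identity $\frac{1}{2^{2n}}\widehat{W_F^4}(0,c)=\sum_{x}|\set{(a,b):D_aD_bF(x)=c}|$ is exactly the computation in the proof of \Cref{prop:plateaued-WF4-mults}, the constancy of $D_aD_bF$ for quadratic $F$ collapses the sum over $x$ to the factor $2^n$, and your analysis of $b\mapsto D_aF(b)+D_aF(0)$ as a $2$-to-$1$ surjection onto the linear hyperplane $\set{\pi_F(a)}^\perp$ (with the $a=0$ term vanishing and the point $a=0$ correctly removed from $\set{a:c\cdot\pi_F(a)=0}$) gives precisely $N(c)=2(2^n-1-\wt(c\cdot\pi_F))$, which is the stated formula after multiplying by $2^n$.
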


Therefore if $F$ is a quadratic APN function with $F(0) =0$ and $b \neq 0$, then
\begin{equation}\label{eq:mult-orthoderiv-components}
\mult_{\graph F}(0,b) =\frac{1}{6 \cdot 2^{2n}}\widehat{W_F^3}(0,b)  = \frac{1}{6 \cdot 2^{3n}}  \widehat{W_F^4}(0,b) = \frac{1}{3}(2^n - 1 -\wt(b \cdot \pi_F))
\end{equation}
with the second equality following from Proposition~\ref{prop:plateaued-WF4-mults}.
\begin{remark}
    Let $F$ be a quadratic APN function with $F(0)=0$.
    If $n$ is odd, then all nonzero components $b \cdot \pi_F$ of the ortho-derivative of $F$ are balanced, i.e. $\pi_F$ is a permutation (this was observed in \cite{CouvreurOrtho-Derivative}).
    This is because when $n$ is odd, $F$ is AB implying $\wt( b\cdot \pi_F) = 2^n-1-3 \cdot \frac{2^{n-1}-1}{3}=2^{n-1}$ by Theorem~\ref{thm:AB-vanDamFlaass}.
    If $n$ is even, then $\wt(b \cdot \pi_F)$ is a multiple of $3$ for all $b \neq 0$ as $3$ divides $2^n-1-\wt(b \cdot \pi_F)$ and $2^n \equiv 1 \mod 3$.
\end{remark}
Note that eq.~(\ref{eq:mult-orthoderiv-components}) shows that for a quadratic APN function $F$, obtaining a lower bound on the exclude points of $\graph F$ is equivalent to finding an upper bound on the weights of the components of $\pi_F$.
By applying Theorem~\ref{thm:plat-exclude-bound}, we then obtain an upper bound on the weights of the component functions of $\pi_F$.
\begin{proposition}
    Assume $n$ is even.
    Let $F \colon \F_2^n \to \F_2^n$ be a quadratic APN function with $F(0)=0$.
    For any $b \neq 0$, the weight of the component function $b \cdot \pi_F$ of the ortho-derivative of $F$ satisfies
    $\wt(b \cdot \pi_F) \leq 2^n +2 -3 \cdot 2^{\frac{n}{2}-1}$.
\end{proposition}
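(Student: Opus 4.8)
The plan is to combine the two ingredients already assembled in this section: the identity \eqref{eq:mult-orthoderiv-components}, which translates the exclude multiplicity at a point $(0,b)$ into the weight of the component $b \cdot \pi_F$, and the lower bound on exclude multiplicities from \Cref{thm:plat-exclude-bound}, valid since any quadratic APN function is plateaued APN.

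First I would note that since $F(0)=0$, for every nonzero $b \in \F_2^n$ the point $(0,b)$ lies in $(\F_2^n)^2 \setminus \graph F$, so \eqref{eq:mult-orthoderiv-components} applies and gives
\[
\mult_{\graph F}(0,b) = \frac{1}{3}\parens{2^n - 1 - \wt(b \cdot \pi_F)}.
\]
Next, applying \Cref{thm:plat-exclude-bound} to this same point yields $\mult_{\graph F}(0,b) \geq 2^{\frac{n}{2}-1} - 1$. Substituting and solving the resulting inequality $\frac{1}{3}(2^n - 1 - \wt(b \cdot \pi_F)) \geq 2^{\frac{n}{2}-1} - 1$ for $\wt(b \cdot \pi_F)$ gives $\wt(b \cdot \pi_F) \leq 2^n - 1 - 3\cdot 2^{\frac{n}{2}-1} + 3 = 2^n + 2 - 3 \cdot 2^{\frac{n}{2}-1}$, which is precisely the claimed bound.

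This is essentially a one-line deduction once the machinery is in place, so there is no real obstacle; all the work has been done in establishing \Cref{thm:plat-exclude-bound} and the ortho-derivative formula \eqref{eq:mult-orthoderiv-components}. The only point worth double-checking is that the hypothesis $n \geq 4$ even is exactly what is needed for \Cref{thm:plat-exclude-bound} to apply, and that the case $b = 0$ is correctly excluded since the ortho-derivative component is only defined (and the relevant identity only stated) for nonzero $b$.
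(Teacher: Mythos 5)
Your proof is correct and follows exactly the paper's own argument: apply \Cref{thm:plat-exclude-bound} to get $\mult_{\graph F}(0,b) \geq 2^{\frac{n}{2}-1}-1$ and substitute into \cref{eq:mult-orthoderiv-components} to solve for $\wt(b \cdot \pi_F)$. The arithmetic checks out and nothing is missing.
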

\begin{proof}
    By Theorem~\ref{thm:plat-exclude-bound}, we have that $\mult_{\graph F}(0,b) \geq2^{\frac{n}{2}-1}-1$ for all $b \neq 0$.
    The result then follows by applying this bound to eq.~(\ref{eq:mult-orthoderiv-components}).
\end{proof}

It was also shown in \cite{CouvreurOrtho-Derivative} that the ortho-derivative of a quadratic APN function has algebraic degree at most $n-2$.
For any function $F \colon \F_2^n \to \F_2^n$, it is well-known that $F$ has algebraic degree $n$ if and only if $\sum_{x \in \F_2^n} F(x) \neq 0$.
Therefore, if $F \colon \F_2^n \to \F_2^n$ is quadratic APN, then $\sum_{x \in \F_2^n} \pi_F(x) =0$, implying that all component functions of $\pi_F$ have even Hamming weight as for any $v \neq 0$, we have
$
\wt(v \cdot \pi_F) \equiv \sum_{x \in \F_2^n} v \cdot \pi_F(x) \pmod 2 = v \cdot \sum_{x \in \F_2^n} \pi_F(x) = 0.
$
This allows us to prove that the exclude multiplicities of the graph of a quadratic APN function are always odd in a different manner than Proposition~\ref{prop:plateaued-odd-mults}.

\begin{proposition}\label{prop:quadratic-odd-mults}
    Assume $n \geq 3$.
    Let $F \colon \F_2^n \to \F_2^n$ be a quadratic APN function.
    Then $\mult_{\graph F}(a,b)$ is odd for all $(a,b) \in (\F_2^n)^2 \setminus \graph F$.
    Equivalently, $|\mathcal{B}(F) \cap H|$ is even for any affine hyperplane $H$ of $\F_2^n$.
\end{proposition}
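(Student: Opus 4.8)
The plan is to reduce the statement to exclude multiplicities of the form $\mult_{\graph F}(0,b)$, evaluate those through the ortho-derivative, and read off the parity. Since exclude multiplicities are invariant under affine transformations of $(\F_2^n)^2$, I may assume $F(0)=0$. As a quadratic function $F$ is plateaued, so Proposition~\ref{prop:plateaued-uniformity} gives $\mult_{\graph F}(a,b)=\mult_{\graph F}(0,b+F(a))$ for every $(a,b)\notin\graph F$; hence it suffices to show that $\mult_{\graph F}(0,b)$ is odd for every $b\in\F_2^n\setminus\set{0}$.

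For the parity I would invoke \cref{eq:mult-orthoderiv-components}, which gives $3\,\mult_{\graph F}(0,b)=2^n-1-\wt(b\cdot\pi_F)$ for $b\neq 0$, where $\pi_F$ is the ortho-derivative of $F$. As recalled just before the statement, $\pi_F$ has algebraic degree at most $n-2<n$, so $\sum_{x\in\F_2^n}\pi_F(x)=0$ and therefore $\wt(b\cdot\pi_F)$ is even for every $b\neq 0$. Consequently $3\,\mult_{\graph F}(0,b)=2^n-1-\wt(b\cdot\pi_F)$ is odd; being three times the non-negative integer $\mult_{\graph F}(0,b)$, that integer is itself odd. This proves the first assertion.

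For the hyperplane reformulation, note first that when $n$ is odd there are no bent Boolean functions on $\F_2^n$, so $\mathcal{B}(F)=\emptyset$ and the claim is vacuous; assume $n$ is even. Applying Proposition~\ref{prop:plateaued-odd-mults-characterization} with $(a,b)=(0,c)$ together with the parity just established, $|\mathcal{B}(F)\cap\set{c}^\perp|$ is even for every $c\neq 0$, which handles every linear hyperplane. For an affine hyperplane $H$ not containing $0$, write $H$ as the nontrivial coset of the linear hyperplane $\set{c}^\perp$ for the appropriate $c\neq 0$, so that $\F_2^n$ is the disjoint union $\set{c}^\perp\sqcup H$ and $|\mathcal{B}(F)|=|\mathcal{B}(F)\cap\set{c}^\perp|+|\mathcal{B}(F)\cap H|$. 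Since $|\mathcal{B}(F)|\equiv 2\mod 4$ — the same fact about the number of bent components of a plateaued function over an even dimension invoked in the proof of Proposition~\ref{prop:plateaued-odd-mults-characterization} — and $|\mathcal{B}(F)\cap\set{c}^\perp|$ is even, we conclude that $|\mathcal{B}(F)\cap H|$ is even as well.

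The substantive input is entirely imported from results recalled above (the degree bound $\deg\pi_F\le n-2$ of Couvreur et al.\ and the ortho-derivative formula \cref{eq:mult-orthoderiv-components}), so the only place that needs a little care is the last step: Proposition~\ref{prop:plateaued-odd-mults-characterization} speaks only of the linear hyperplanes $\set{c}^\perp$, and extending to the affine hyperplanes through a general point requires separately knowing that the total number of bent components of $F$ is even. I expect that to be the main (though minor) obstacle; everything else is a short computation.
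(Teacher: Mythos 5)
Your proposal is correct and follows essentially the same route as the paper's proof: reduce to $\mult_{\graph F}(0,b)$ via \Cref{prop:plateaued-uniformity}, then combine \cref{eq:mult-orthoderiv-components} with the degree bound $\deg\pi_F\le n-2$ (so every $\wt(b\cdot\pi_F)$ is even) to get odd multiplicities, and pass to bent components via \Cref{prop:plateaued-odd-mults-characterization}. The only differences are cosmetic: the paper dispatches odd $n$ directly through the AB case, and it leaves implicit the step from linear to affine hyperplanes that you correctly supply using $|\bentcomps{F}|\equiv 2 \bmod 4$.
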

\begin{proof}
    If $n$ is odd, then $F$ is AB and $\mult_{\graph F}(a,b) = \frac{2^n-2}{6}$ for all $(a,b) \notin \graph F$.
    So, assume $n$ is even, and without loss of generality, assume $F(0)=0$.
    As mentioned above, the algebraic degree of $\pi_F$ is at most $n-2$, implying every component function of $\pi_F$ has even Hamming weight.
    Therefore, by eq.~(\ref{eq:mult-orthoderiv-components}), for any $b \neq 0$, we know that $\mult_{\graph F}(0,b) = \frac{1}{3}(2^n-1-\wt(b \cdot \pi_F))$ is odd.
    Thus, by applying Proposition~\ref{prop:plateaued-uniformity}, we know that any exclude point of $\graph F$ has odd multiplicity.
\end{proof}

Note that it does not always hold that the exclude multiplicities of the graph of an APN function must be odd.
For example, when $n =5$, the graph of the APN inverse function has exclude points of multiplicity $3,4,5$, and $6$.

As a corollary of Proposition~\ref{prop:quadratic-odd-mults}, every quadratic APN function has a maximal Sidon set as its graph, and as previously mentioned, this is already known since all quadratic functions are plateaued.
Nonetheless, it is interesting that Proposition~\ref{prop:quadratic-odd-mults} now provides a fourth method for proving no quadratic APN function (for $n \geq 3$) has a non-maximal Sidon set as its graph (the other three methods being the work of \cite{budaghyanCarletHellesetUpperBoundsDegree}, the usage of the D-property in \cite{carlet_apnGraphMaximal}, and Theorem~\ref{thm:plat-exclude-bound}).

\section{Power functions}\label{sec:power-fcns}

In this section, we consider the case when $F \colon \F_{2^n} \to \F_{2^n}$ is of the form $F(x)=x^d$ for some integer $d$.
If $F$ is an APN power function, then we will see that the possible values of $\mult_{\graph F}(a,b)$ do not depend on $a \in \F_{2^n}^\ast$ as $b$ ranges across $\F_{2^n}$ (i.e. $\Pi_F^\beta$ does not depend on $\beta \in \F_{2^n}^\ast$).
Also, we will fully determine $\mult_{\graph F}(0,a)$ and $\mult_{\graph F}(a,0)$ when $F$ is APN, $n$ is odd, and $a \neq 0$.

For any power function $F(x)=x^d$, we have $W_F(u,v) = W_F(1,u^{-d}v)$ for all $u \neq 0$, and in the case when $F$ is a permutation, we then have for $v \neq 0$ that $W_F(u,v)=W_F(uv^{-\frac{1}{d}},1)$ where $\frac{1}{d}$ is the inverse of $d$ modulo $2^n-1$ \cite{CarletBook}.
The first of these two properties allows us to prove the following result. 

\begin{proposition}\label{prop:power-functions-uniform}
    Let $F \colon \F_{2^n} \to \F_{2^n}$ be an APN power function $F(x)=x^d$.
    If $a,b,c \in \F_{2^n}$ such that $a,c \neq 0$ and $b \neq a^d$, then $\mult_{\graph F}(a,b) = \mult_{\graph F}(c,(a^{-1}c)^d b)$. 
    In particular, the exclude multiplicities of the points in $\set{a} \times (\F_{2^n} \setminus \set{a^d})$, with respect to $\graph F$, do not depend on the choice of $a \in \F_{2^n}^\ast$.
\end{proposition}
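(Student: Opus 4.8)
The plan is to exploit the multiplicative homogeneity of power maps together with the fact, already used in the analysis of the Brinkmann--Leander--Edel--Pott function, that exclude multiplicities are invariant under CCZ-equivalence --- in fact under any affine coordinate change of the ambient space. So the proof will be a one-line change of variables, once the right automorphism is identified.

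First I would record the invariance principle in the form needed here: if $A \colon \F_2^N \to \F_2^N$ is an affine bijection and $S \subseteq \F_2^N$ is a Sidon set, then $A(S)$ is Sidon and $\mult_{A(S)}(A(p)) = \mult_S(p)$ for every $p \notin S$. This holds because in characteristic $2$ every affine map satisfies $A(x)+A(y)+A(z) = A(x+y+z)$, so $A$ carries the triples counted by $\mult_S(p)$ bijectively onto those counted by $\mult_{A(S)}(A(p))$. (Equivalently one can stay on the Fourier side and use $\mult_{\graph F}(\cdot,\cdot) = \tfrac{1}{6\cdot 2^{2n}}\widehat{W_F^3}(\cdot,\cdot)$ from \Cref{prop:PiF-beta-excludemults}.)

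Next, for $\lambda \in \F_{2^n}^\ast$ I would introduce the linear automorphism $A_\lambda \colon \F_{2^n}^2 \to \F_{2^n}^2$ given by $A_\lambda(u,v) = (\lambda u, \lambda^d v)$. Since $(\lambda x)^d = \lambda^d x^d$, we have $A_\lambda(x,x^d) = (\lambda x,(\lambda x)^d)$, hence $A_\lambda(\graph F) = \graph F$. Applying the invariance principle to $S = \graph F$ and $A = A_\lambda$ gives $\mult_{\graph F}(u,v) = \mult_{\graph F}(\lambda u,\lambda^d v)$ for all $(u,v) \notin \graph F$. Taking $\lambda = a^{-1}c$ and $(u,v) = (a,b)$ --- legitimate because $b \neq a^d$ means $(a,b)\notin \graph F$ --- yields $\mult_{\graph F}(a,b) = \mult_{\graph F}(c,(a^{-1}c)^d b)$; note $(c,(a^{-1}c)^d b)\notin\graph F$ as well, since equality would force $b=a^d$. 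For the "in particular" clause, observe that for $a,c\in\F_{2^n}^\ast$ the map $b\mapsto (a^{-1}c)^d b$ is a bijection of $\F_{2^n}$ carrying $a^d$ to $c^d$, so it matches up the exclude multiplicities occurring on the line $\{a\}\times\F_{2^n}$ with those on $\{c\}\times\F_{2^n}$, and the set of values does not depend on the choice of $a\in\F_{2^n}^\ast$.

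There is no genuinely hard step here; the only care needed is the bookkeeping of which points lie on $\graph F$, and the (harmless) observation that the argument never uses that $F$ is a permutation, only that $\lambda\mapsto\lambda x$ is a bijection for $\lambda\neq 0$ and that $x\mapsto x^d$ commutes with scaling. If one prefers to mirror the Walsh-transform identity $W_F(u,v)=W_F(1,u^{-d}v)$ emphasized just before the statement, the same proof can be written by substituting $(u,v)=(\lambda^{-1}u',\lambda^{-d}v')$ in $\widehat{W_F^3}(c,\lambda^d b)$ and using $W_F(\lambda^{-1}u',\lambda^{-d}v')=W_F(u',v')$ to conclude $\widehat{W_F^3}(c,\lambda^d b)=\widehat{W_F^3}(a,b)$.
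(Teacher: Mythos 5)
Your proof is correct, but it takes a genuinely different route from the paper's. The paper works entirely on the Fourier side: it uses the identity $W_F(u,v)=W_F(1,u^{-d}v)$ for $u\neq 0$ to show $\widehat{W_F^3}(a,b)=\widehat{W_F^3}(1,a^{-d}b)$ by a change of summation variable, which requires separately tracking the $u=0$ terms and verifying that the two boundary sums cancel (via $W_F(0,a^d v)=W_F(0,v)$), and then invokes $\mult_{\graph F}=\frac{1}{6\cdot 2^{2n}}\widehat{W_F^3}$. You instead argue directly from the definition of exclude multiplicity: the linear bijection $A_\lambda(u,v)=(\lambda u,\lambda^d v)$ fixes $\graph F$ setwise because $(\lambda x)^d=\lambda^d x^d$, and any affine bijection carries the triples counted by $\mult_S(p)$ bijectively onto those counted by $\mult_{A(S)}(A(p))$ since $A(x)+A(y)+A(z)=A(x+y+z)$ in characteristic $2$. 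Your argument is shorter, avoids the boundary-term bookkeeping entirely, and makes the underlying reason transparent (it is the standard linear self-equivalence of power functions, and it reuses the CCZ/affine invariance of exclude multiplicities that the paper itself exploits for the Brinkmann--Leander--Edel--Pott corollary); the paper's computation has the advantage of staying within the Walsh-transform toolkit used throughout and of explicitly recording the identity $\widehat{W_F^3}(a,b)=\widehat{W_F^3}(1,a^{-d}b)$. Your checks that $(a,b)\notin\graph F$ iff $(c,(a^{-1}c)^d b)\notin\graph F$, and that $b\mapsto(a^{-1}c)^d b$ carries $a^d$ to $c^d$ for the ``in particular'' clause, are exactly the right bookkeeping.
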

\begin{proof}
    Let $a,b \in \F_{2^n}$ such that $a \neq 0$ and $b \neq a^d$.
    Recall that we have the equality $W_F(u,v) = W_F(1, u^{-d}v)$ for all $u,v \in \F_2^n$ where $u \neq 0$.
    So
    \begin{align*}
       \widehat{W_F^3}(a,b)
        &= \sum_{(u,v) \in \F_{2^n}^2} (-1)^{\Tr(ua) + \Tr(vb)} W_F^3(u,v) \\
        &= \sum_{\substack{(u,v) \in \F_{2^n}^2 \\ u \neq 0}} (-1)^{\Tr(u) + \Tr(vb)} W_F^3(ua^{-1},v) + \sum_{v \in \F_{2^n}} (-1)^{\Tr(vb)}W_F^3(0,v)\\
         &= \sum_{\substack{(u,v) \in \F_{2^n}^2 \\ u \neq 0}} (-1)^{\Tr(u) + \Tr(vb)} W_F^3(1,u^{-d}a^d v) + \sum_{v \in \F_{2^n}} (-1)^{\Tr(vb)}W_F^3(0,v)\\
        &=  \sum_{\substack{(u,v) \in \F_{2^n}^2 \\ u \neq 0}} (-1)^{\Tr(u) + \Tr(va^{-d}b)} W_F^3(1,u^{-d} v) + \sum_{v \in \F_{2^n}} (-1)^{\Tr(vb)}W_F^3(0,v)\\
        &= \sum_{\substack{(u,v) \in \F_{2^n}^2 \\ u \neq 0}} (-1)^{\Tr(u) + \Tr(va^{-d}b)} W_F^3(u,v) + \sum_{v \in \F_{2^n}} (-1)^{\Tr(vb)}W_F^3(0,v).
    \end{align*}
    We then have that 
    \[
    \widehat{W_F^3}(a,b)
    = \widehat{W_F^3}(1,a^{-d}b) - \sum_{v \in \F_{2^n}} (-1)^{\Tr(va^{-d}b)}W_F^3(0,v) +\sum_{v \in \F_{2^n}} (-1)^{\Tr(vb)}W_F^3(0,v).
    \]
    However, notice that the following is equal to $0$
    \begin{align*}
        &- \sum_{v \in \F_{2^n}} (-1)^{\Tr(va^{-d}b)}W_F^3(0,v) +\sum_{v \in \F_{2^n}} (-1)^{\Tr(vb)}W_F^3(0,v) \\
        &= \sum_{v \in \F_{2^n}} (-1)^{\Tr(vb)}
        \parens{W_F^3(0,v) - W_F^3(0,a^dv)}
    \end{align*}
    by the equality $W_F(0,a^dv) = W_F(0,v)$.
    Therefore $\widehat{W_F^3}(a,b) = \widehat{W_F^3}(1,a^{-d}b)$, implying $\mult_{\graph F}(a,b) = \mult_{\graph F}(1,a^{-d}b)$ as $\mult_{\graph F}(x,y) = \frac{1}{6 \cdot 2^{2n}} \widehat{W_F^3}(x,y)$ for all $(x,y) \notin \graph F$ by eq.~(\ref{eq:WFk-set-size}).
    Note that $b \mapsto a^{-d}b$ is a permutation, and so the exclude multiplicities of the points in $\set{a} \times (\F_{2^n} \setminus \set{F(a)}$, with respect to $\graph F$, are exactly those that are contained in $\set{a} \times (\F_{2^n} \setminus \set{1})$.
    Finally, we can apply our equality twice to see that for any $c \neq 0$, we have $\mult_{\graph F}(a,b) = \mult_{\graph F}(1,a^{-d}b)=\mult_{\graph F}(c,(a^{-1}c)^d b)$.
\end{proof}

If $F(x)=x^d$ is a APN and $n$ is odd, we can determine the exact exclude multiplicities of points in $\set{0} \times \F_{2^n}^\ast$ with respect to $\graph F$, but first, we describe the sum $\sum_{x \in \F_{2^n}}\gamma_F(x + a, x^d + b)$ in the following way to make our computation straightforward.

\begin{proposition}
\label{prop:mult-intersection-with-imD1F}
    Let $F \colon \F_{2^n} \to \F_{2^n}$ be a power function $F(x)=x^d$, and let $a,b \in \F_{2^n}$.
    Let $\Phi \colon \F_{2^n} \setminus \set{a} \to \F_{2^n}$ be the function $\Phi(x) = \frac{x^d+b}{(x+a)^d}$.
    Then
    \[
        \sum_{y \in \F_{2^n}}\gamma_F(y+a, y^d+b) = |\Phi^{-1}(\im( D_1F)))|.
    \]
\end{proposition}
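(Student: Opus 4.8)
The plan is to unwind the definition of $\gamma_F$ and exploit the homogeneity of the derivatives of a power function. Recall from \Cref{sec:prelim} that for $S = \graph F$ one has $\gamma_F(u,v) = 1$ precisely when $(u,v)\neq(0,0)$ and $\delta_F(u,v)>0$; since $\delta_F(u,v) = |\set{x : D_u F(x) = v}|$, this means $\gamma_F(u,v)=1$ iff $v \in \im D_u F$ and $(u,v)\neq(0,0)$. Thus each summand $\gamma_F(x+a, x^d+b)$ equals $1$ exactly when $x^d+b \in \im D_{x+a}F$ together with $(x+a, x^d+b)\neq(0,0)$.

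First I would isolate the term $x=a$. Here $D_0F$ is the zero map, so $\im D_0F = \set{0}$; and $\gamma_F(0,w)=0$ for every $w$, since $\delta_F(0,w)$ equals $2^n$ when $w=0$ and $0$ otherwise, while the point $(0,0)$ is excluded by definition. Hence this term contributes nothing, which is consistent with $\Phi$ being undefined at $a$ and therefore not seeing $x=a$ in its preimage.

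Next, for $x\neq a$ I would use the scaling identity for power maps: for $c\neq0$,
\[
D_cF(z) = z^d + (z+c)^d = c^d\parens{(z/c)^d + (z/c+1)^d} = c^d\, D_1F(z/c),
\]
and as $z$ ranges over $\F_{2^n}$ so does $z/c$, so $\im D_cF = c^d\cdot\im D_1F$. Applying this with $c = x+a\neq0$, the membership $x^d+b\in\im D_{x+a}F$ is equivalent to $\frac{x^d+b}{(x+a)^d}\in\im D_1F$, that is, to $\Phi(x)\in\im D_1F$; and the side condition $(x+a, x^d+b)\neq(0,0)$ holds automatically because $x+a\neq0$. Therefore $\gamma_F(x+a, x^d+b)=1$ if and only if $x\in\Phi^{-1}(\im D_1F)$.

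Summing over all $x\in\F_{2^n}$ and discarding the vanishing $x=a$ term then yields
\[
\sum_{x\in\F_{2^n}}\gamma_F(x+a, x^d+b) = |\set{x\in\F_{2^n}\setminus\set{a} : \Phi(x)\in\im D_1F}| = |\Phi^{-1}(\im D_1F)|,
\]
as claimed. There is no real obstacle here; the only points requiring care are handling the boundary case $x=a$ correctly and observing that the nonzero-point clause in the definition of $\gamma_F$ is automatically satisfied once $x\neq a$.
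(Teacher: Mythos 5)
Your proof is correct and follows essentially the same route as the paper's: both rest on the homogeneity identity $D_{x+a}F(z)=(x+a)^d\,D_1F\!\parens{\frac{z}{x+a}}$ to convert the condition $x^d+b\in\im D_{x+a}F$ into $\Phi(x)\in\im D_1F$. Your treatment is in fact slightly more careful than the paper's, since you explicitly verify that the $x=a$ term vanishes and that the nonzero-point clause in the definition of $\gamma_F$ is automatic for $x\neq a$, points the paper leaves implicit.
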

\begin{proof}
    When $y \neq a$, we have $D_{y+a}F(x) = (y+a)^dD_1F\parens{\frac{x}{y+a}}$.
    Therefore if $y \neq a$, then $\gamma_F(y+a, y^d+b)=1$ if and only if $D_1F\parens{\frac{x}{y+a}}=\frac{y^d+b}{(y+a)^d}$ has a solution.
    For any fixed $y \neq a$, the function $x \mapsto \frac{x}{y+a}$ is a permutation.
    Therefore the sum $\sum_{y \in \F_{2^n}}\gamma_F(y+a, y^d+b)$ is the number of $y\neq a$ such that $\frac{y^d+b}{(y+a)^d}$ is contained in $\im (D_1F)$, that is, the size of the preimage of $\im (D_1F)$ under $\Phi$.
    Hence
    \[
    \sum_{y \in \F_{2^n}}\gamma_F(y+a, y^d+b) = |\Phi^{-1}(\im (D_1F))|,
    \]
    as desired.
\end{proof}

With this fact and our previous results, we determine the exclude multiplicities, with respect to $\graph F$, of points in $\set{0} \times \F_{2^n}^\ast$ when $n$ is odd.
This is because in Proposition~\ref{prop:PiF-beta-excludemults}, we proved that $3\mult_{\graph F}(a,b) = \sum_{x \in \F_2^n} \gamma_F(x+a, F(x)+b)$ for all $(a,b) \in (\F_2^n)^2 \setminus \graph F$.

\begin{proposition}\label{prop:APN-power-mults-0a-a0}
    Assume $n \geq 3$.
    Let $F \colon \F_{2^n} \to \F_{2^n}$ be an APN power function $F(x)=x^d$.
    If $n$ is odd, then for all $a \neq 0$, we have 
    \[
    \mult_{\graph F}(a,0)=\mult_{\graph F}(0,a)=\frac{2^{n-1}-1}{3}.
    \]
\end{proposition}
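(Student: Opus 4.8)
The plan is to evaluate $3\mult_{\graph F}(0,a)$ and $3\mult_{\graph F}(a,0)$ directly by combining \Cref{prop:PiF-beta-excludemults} with \Cref{prop:mult-intersection-with-imD1F}, and to show each equals $2^{n-1}-1$. First I would record the auxiliary facts that make this work. Since $n$ is odd, the APN power function $F(x)=x^d$ is a permutation of $\F_{2^n}$, so $(0,a),(a,0)\notin\graph F$ whenever $a\neq0$. Also $D_1F(x)=D_1F(x+1)$, so the fibres of $D_1F$ consist of pairs $\{x,x+1\}$, and since $F$ is APN each such fibre over $\im D_1F$ has exactly two elements; hence $|\im D_1F|=2^{n-1}$. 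Finally $D_1F(0)=F(0)+F(1)=1$, so $1\in\im D_1F$.

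For the point $(0,a)$, \Cref{prop:PiF-beta-excludemults} gives $3\mult_{\graph F}(0,a)=\sum_{y\in\F_{2^n}}\gamma_F(y,y^d+a)$, and applying \Cref{prop:mult-intersection-with-imD1F} with shift $0$ and offset $a$ rewrites this as $|\Phi^{-1}(\im D_1F)|$, where $\Phi\colon\F_{2^n}\setminus\{0\}\to\F_{2^n}$ is $\Phi(y)=\frac{y^d+a}{y^d}=1+a(y^{-1})^d$. Since $y\mapsto(y^{-1})^d$ is a permutation of $\F_{2^n}^\ast$, the map $\Phi$ is a bijection from $\F_{2^n}^\ast$ onto $\F_{2^n}\setminus\{1\}$, so $|\Phi^{-1}(\im D_1F)|=|\im D_1F\setminus\{1\}|=2^{n-1}-1$ using $1\in\im D_1F$. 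Because $2^{n-1}\equiv1\pmod 3$ for $n$ odd, this yields $\mult_{\graph F}(0,a)=\frac{2^{n-1}-1}{3}$.

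I would handle the point $(a,0)$ analogously: \Cref{prop:PiF-beta-excludemults} gives $3\mult_{\graph F}(a,0)=\sum_{y\in\F_{2^n}}\gamma_F(y+a,y^d)$, which \Cref{prop:mult-intersection-with-imD1F} with shift $a$ and offset $0$ identifies with $|\Phi^{-1}(\im D_1F)|$ for $\Phi\colon\F_{2^n}\setminus\{a\}\to\F_{2^n}$, $\Phi(y)=\bigl(\tfrac{y}{y+a}\bigr)^d$. Here $y\mapsto\frac{y}{y+a}$ is a bijection of $\F_{2^n}\setminus\{a\}$ onto $\F_{2^n}\setminus\{1\}$, and composing with the permutation $z\mapsto z^d$, which fixes $1$, shows $\Phi$ is again a bijection onto $\F_{2^n}\setminus\{1\}$; so the count is $2^{n-1}-1$ as before and $\mult_{\graph F}(a,0)=\frac{2^{n-1}-1}{3}$. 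Alternatively, one can sidestep the second computation: since $F$ is a permutation, the coordinate swap $(x,y)\mapsto(y,x)$ is an affine permutation carrying $\graph F$ to $\graph{F^{-1}}$, where $F^{-1}$ is the APN power function $x\mapsto x^{d^{-1}\bmod 2^n-1}$, and it carries $(a,0)$ to $(0,a)$, so $\mult_{\graph F}(a,0)=\mult_{\graph{F^{-1}}}(0,a)=\frac{2^{n-1}-1}{3}$ by the first part and the CCZ-invariance of exclude multiplicities.

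The computations are routine; the step I expect to require the most care — and the crux of the argument — is checking that in each case the image of $\Phi$ is exactly $\F_{2^n}\setminus\{1\}$ and that $1\in\im D_1F$, since the final answer hinges entirely on this single omitted point, together with the standard fact that an APN power function over $\F_{2^n}$ with $n$ odd must be a permutation.
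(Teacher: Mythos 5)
Your proposal is correct and follows essentially the same route as the paper's proof: both apply \Cref{prop:PiF-beta-excludemults} together with \Cref{prop:mult-intersection-with-imD1F}, show that the relevant map $\Phi$ is a bijection onto $\F_{2^n}\setminus\{1\}$, and conclude from $1\in\im D_1F$ and $|\im D_1F|=2^{n-1}$. Your explicit justification of these last two facts, and the CCZ/coordinate-swap shortcut for the point $(a,0)$, are harmless additions to what is otherwise the paper's argument.
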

\begin{proof}
    Assume $n$ is odd.
    For $(a,b) \in \F_{2^n}^2$, let $\Phi \colon \F_{2^n} \setminus \set{a} \to \F_{2^n}$ be defined by $\Phi(x)=\frac{x^d+b}{(x+a)^d}$.
    First, suppose $a \neq 0$ and $b =0$.
    Then $\Phi(x) = \frac{x^d}{(x+a)^d} = \parens{\frac{x}{x+a}}^d$.
    For $x \neq a$, we have $\frac{x}{x+a} =a(x+a)^{-1}+1$. 
    Note $x \mapsto a(x+a)^{-1}+1$ is a permutation from $\F_{2^n} \setminus \set{a}$ is a permutation onto its image which is $\F_{2^n} \setminus \set{1}$.
    So $|\Phi^{-1}(\im (D_1 F))| = |\set{x\in \F_{2^n} \setminus \set{1} : x^d \in \im (D_1 F)}|$.
    Note that $F(x)=x^d$ is a permutation since $n$ is odd.
    Hence $1 \notin \im (\Phi)$ but $1 \in \im (D_1F)$.
    Applying Proposition~\ref{prop:PiF-beta-excludemults} and Proposition~\ref{prop:mult-intersection-with-imD1F}, we have 
    \[
    \mult_{\graph F}(a,0) =\frac{|\Phi^{-1}(\im (D_1F))|}{3} =\frac{|\im (D_1 F)| - 1}{3} = \frac{2^{n-1}-1}{3}.
    \]

    Now, suppose $a = 0$ and $b \neq 0$.
    Then $\Phi(x) = \frac{x^d +b}{x^d}=bx^{-d}+1$.
    Hence $\Phi$ is a permutation onto its image which is $\F_{2^n} \setminus \set{1}$, and similar to before we have $1 \notin \im (\Phi)$ but $1 \in \im (D_1F)$.
    Therefore
    \[
    \mult_{\graph F}(0,a) = \frac{|\Phi^{-1}(\im (D_1F))|}{3} =\frac{2^{n-1}-1}{3}.
    \]
\end{proof}

In \cite[Conjecture 21]{Kaleyski2019TwoPoints}, Kaleyski conjectured that if $F$ is an APN power function, the two values $\sum_{y \in \F_{2^n}} \delta_F(y, F(y)+1)$ and $\sum_{y \in \F_{2^n}} \delta_F(y+1, F(y))$ are equal to $\sum_{y \in \F_{2^n}}\delta_{x^3}(y, y^3 +1)$.
By Proposition~\ref{prop:PiF-beta-excludemults}, this is equivalent to $\mult_{\graph F}(0,1) = \mult_{\graph F}(1,0) = \mult_{\graph {x^3}}(0,1)$.
Therefore, Proposition~\ref{prop:APN-power-mults-0a-a0} proves Kaleyski's conjecture in the case of $n$ odd, but the case of $n$ even still remains open.

\begin{remark}
    The reader may have noticed that the particular case of $\Phi(x) = \frac{x^d+b}{(x+a)^d}$ when $a=b=1$ was studied by Carlet and Picek in \cite{CarletPicek}.
    In particular, they proved $x \mapsto x^d$ is APN if and only if $x \mapsto \frac{x^d+1}{(x+1)^d}$ is $2$-to-$1$ from $\F_{2^n}\setminus \F_2 \to \F_{2^n} \setminus \set{1}$.
    More generally, it is true that $F(x)=x^d$ is APN if and only if for all $a \in \F_{2^n}^\ast$ the map $\Phi(x)= \frac{x^d+a^d}{(x+a)^d}$ is $2$-to-$1$ from $\F_{2^n}\setminus \set{0,a}$ to $\F_{2^n} \setminus \set{1}$.
    To see this, suppose $x \neq a$ and let $s = \frac{x}{x+a}$.
Observe that $x \mapsto s$ is a permutation from $\F_{2^n} \setminus \set{a}$ to $\F_{2^n}\setminus \set{1}$.
Also $D_1F(s)=s^d + (s+1)^d = \frac{x^d+a^d}{(x+a)^d}$.
Then $x \mapsto \frac{x^d + a^d}{(x+a)^d}$ is $2$-to-$1$ from $\F_{2^n}\setminus \set{0,a}$ to $\F_{2^n} \setminus \set{a}$
if and only if for any $b \neq 1$, the equation $D_1F(s)=b$ has at most $2$ solutions for $s \in \F_{2^n}$ (and there are no solutions in $\set{0,a}$).
Therefore $x \mapsto \frac{x^d + a^d}{(x+a)^d}$ is $2$-to-$1$ from $\F_{2^n}\setminus \set{0,a}$ to $\F_{2^n} \setminus \set{a}$ if and only if $D_1F$ is $2$-to-$1$, which is equivalent to $F$ being APN.
\end{remark}

\section{Lower bounds on distances from the APN inverse function}\label{sec:inverse}

Throughout this section, we assume that $n$ is odd and define $F \colon \F_{2^n} \to \F_{2^n}$ to be the multiplicative inverse function $F(x)=x^{-1}$, where $\frac{1}{0}:=0$.
To determine lower bounds on the Hamming distance from $F(x)=x^{-1}$ over $\F_{2^n}$ to any other APN function, we will determine lower bounds on the multiplicities of the exclude points of $\graph F$.
In particular, we derive a lower bound in terms of the binary Kloosterman sums
\[
K_n(a) = \sum_{x \in \F_{2^n}} (-1)^{\Tr(ax + x^{-1})}.
\]
For notational convenience, we also denote by $K_n^\ast(a)$ the sum 
\[
K_n^\ast(a)  = \sum_{x \in \F_{2^n}^\ast} (-1)^{\Tr(ax + x^{-1})}.
\]
A well-known result is that $\set{K_n^\ast(a) : a \neq 0}$ is the set of integers in $[-2^{\frac{n}{2}+1}, 2^{\frac{n}{2}+1}]$ that are congruent to $3 \mod 4$, see \cite{LachaudWolfmannGoppa}.
Kloosterman sums and the Walsh transform of $F$ have a very close connection as one can easily verify that $W_F(u,v)=K_n(uv) + 2^n \delta_{(0,0)}(u,v)$.

Kloosterman sums can also be used to describe the number of rational points on ordinary elliptic curves over $\F_{2^n}$.
An \textit{algebraic plane curve} $\mathcal{C}$ over $\F_{2^n}$ is defined by the equation $p(x,y) = 0$ for some irreducible polynomial $p(x,y)$ over $\F_{2^n}$.
If $\widetilde{p}(x,y,z)$ is the associated homogenized polynomial to $p(x,y)$, we denote the solution set to $\widetilde{p}(x,y,z)=0$ as $\widetilde{\mathcal{C}}$.
We say that the number of \textit{rational points} of $\mathcal{C}$ is the size of $\widetilde{\mathcal{C}}$.
Also, a point $(x,y,z) \in \widetilde{\mathcal {C}}$ is \textit{singular} if all partial derivatives of $\widetilde{p}(x,y,z)$ vanish at $(x,y,z)$.
If there are no singular points, we define the \textit{genus} of $\mathcal{C}$ as $g = \frac{(d-1)(d-2)}{2}$, where $d$ is the degree of $\widetilde{p}(x,y,z)$.
If the genus of $\mathcal{C}$ is $1$, then we say that $\mathcal{C}$ is \textit{elliptic}.
Moreover, we say that $\mathcal{C}$ is \textit{supersingular} if the coefficient of $xyz$ is zero in $\widetilde{p}(x,y,z)$, otherwise we say that $\mathcal{C}$ is \textit{ordinary} (this is not the typical definition of supersingular and ordinary elliptic curves, but it is equivalent due to a result of Hartshorne, see \cite[Proposition 4.21]{Hartshorne}).

Lachaud and Wolfmann proved the following two results in \cite{LachaudWolfmannGoppa}.

\begin{proposition}[\textup{\cite{LachaudWolfmannGoppa}[Corollary 2.2]}]
    \label{prop:ordinary-elliptic-isomorphic}
    An ordinary elliptic curve $\mathcal{E}$ defined over $\F_{2^n}$ is isomorphic to one of the following Kloosterman curves 
    \begin{align*}
    \mathcal{KL}^+_a &\colon y^2 +y = ax + x^{-1} \quad a \in \F_{2^n}^\ast, \\
    \mathcal{KL}^-_a &\colon y^2 +y = ax + x^{-1} +\tau \quad a,\tau \in \F_{2^n}, a \neq 0.
    \end{align*}
\end{proposition}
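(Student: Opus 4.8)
The plan is to derive the statement from the classical Weierstrass classification of elliptic curves in characteristic two. Recall that every ordinary elliptic curve $\mathcal{E}$ over $K = \F_{2^n}$ (ordinary meaning $j(\mathcal{E}) \neq 0$, i.e.\ the coefficient of $xyz$ in a Weierstrass model is nonzero) is $K$-isomorphic to a curve in short form
\[
E_{c,d} \colon y^2 + xy = x^3 + cx^2 + d, \qquad d \in K^\ast,
\]
with $j(E_{c,d}) = 1/d$ and $\Aut(E_{c,d}) = \{(x,y)\mapsto(x,y),\ (x,y)\mapsto(x,x+y)\} \cong \Z/2$; moreover $E_{c,d} \cong E_{c',d'}$ over $K$ if and only if $d = d'$ and $\Tr_{\F_{2^n}/\F_2}(c+c') = 0$, the indeterminacy in $c$ being precisely the image $\{z^2+z : z \in \F_{2^n}\}$ of the Artin--Schreier map. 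Hence the $K$-isomorphism classes of ordinary elliptic curves over $\F_{2^n}$ are parametrized by pairs $(d,\epsilon) \in \F_{2^n}^\ast \times \F_2$, where $j = 1/d$ and $\epsilon = \Tr(c)$ records the quadratic twist. I will also use freely that any birational map between smooth projective genus-one curves that matches up the chosen base points is an isomorphism of elliptic curves.

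First I would bring each Kloosterman curve into this short form by an explicit chain of elementary substitutions and read off $(d,\epsilon)$. Multiplying $y^2 + y = ax + x^{-1}$ by $x^2$ and setting $v = xy$ gives $v^2 + xv = ax^3 + x$; the linear change $(x,v)\mapsto(ax,av)$ turns this into $v^2 + xv = x^3 + ax$, and then $v\mapsto v+a$ yields $v^2 + xv = x^3 + a^2$. This last equation has nonzero discriminant (namely $a^2$), so it defines an elliptic curve, and the chain of substitutions is readily checked to match base points; hence $\mathcal{KL}^+_a \cong E_{0,a^2}$, which has $j = 1/a^2$ and twist invariant $\epsilon = \Tr(0) = 0$. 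The same computation applied to $\mathcal{KL}^-_a \colon y^2 + y = ax + x^{-1} + \tau$ carries the extra summand along and produces $v^2 + xv = x^3 + \tau x^2 + a^2$, so $\mathcal{KL}^-_a \cong E_{\tau, a^2}$, again with $j = 1/a^2$ but now with twist invariant $\epsilon = \Tr(\tau)$.

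To finish, let $\mathcal{E}/\F_{2^n}$ be an arbitrary ordinary elliptic curve; write $j(\mathcal{E}) = 1/d$ with $d \in \F_{2^n}^\ast$ and let $\epsilon \in \F_2$ be its twist invariant, so $\mathcal{E} \cong E_{c,d}$ with $\Tr(c) = \epsilon$. Since $x\mapsto x^2$ is a bijection of $\F_{2^n}$, there is a unique $a \in \F_{2^n}^\ast$ with $a^2 = d$. If $\epsilon = 0$ then $\mathcal{E} \cong E_{0,a^2} \cong \mathcal{KL}^+_a$, and if $\epsilon = 1$ then, choosing any $\tau \in \F_{2^n}$ with $\Tr(\tau) = 1$, we get $\mathcal{E} \cong E_{\tau,a^2} \cong \mathcal{KL}^-_a$; in either case $\mathcal{E}$ is isomorphic to one of the listed curves (in fact the $\mathcal{KL}^-$ family alone already exhausts all ordinary elliptic curves over $\F_{2^n}$). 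The genuinely delicate point is not any individual computation but the surrounding bookkeeping: checking that the substitutions above really define isomorphisms of the \emph{pointed} smooth projective curves, rather than merely birational maps that happen to land on Weierstrass-looking equations, and faithfully transporting the twist invariant $\Tr(c)$ through the reduction so that the $+$ and $-$ families are confirmed to realize the two distinct quadratic twists of each $j$-invariant. I expect that keeping this twist computation honest, rather than the routine characteristic-two Weierstrass algebra, will be the main thing to watch.
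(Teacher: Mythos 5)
The paper gives no proof of this statement --- it is quoted verbatim from Corollary 2.2 of Lachaud--Wolfmann --- so there is no internal argument to compare against; your proof is correct and is essentially the standard one underlying the cited reference. Your reduction of $y^2+y=ax+x^{-1}(+\tau)$ via $v=xy$, rescaling by $a$, and the shift $v\mapsto v+a$ to the form $v^2+xv=x^3+\tau x^2+a^2$ is accurate, and the classification of ordinary curves by the pair $\bigl(d,\Tr(c)\bigr)$ with $j=1/d$ then finishes the argument since squaring is a bijection on $\F_{2^n}$. One point your computation usefully makes explicit: for the point count in \Cref{prop:LachaudWolfmann-rational-points} to be well defined, the $\tau$ in $\mathcal{KL}^-_a$ must be a fixed element with $\Tr(\tau)=1$ (as in Lachaud--Wolfmann), not an arbitrary element of $\F_{2^n}$ as the paper's transcription suggests; your identification of $\Tr(\tau)$ as the quadratic-twist invariant is exactly what pins this down.
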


\begin{proposition}[\textup{\cite{LachaudWolfmannGoppa}[Corollary 3.3]}]\label{prop:LachaudWolfmann-rational-points}
    For any $a \neq 0$, the number of rational points of $\mathcal{KL}^\pm_a$ is given by $2^n +1 \pm K_n^\ast(a)$.
\end{proposition}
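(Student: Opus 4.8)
The plan is to count the rational points of each Kloosterman curve directly, splitting $\widetilde C$ into its affine part (the chart $Z = 1$) and its intersection with the line at infinity $Z = 0$. The only structural input needed is the Artin--Schreier description of the map $y \mapsto y^2 + y$ in characteristic $2$: it is $\F_2$-linear on $\F_{2^n}$ with kernel $\F_2$ and image $\set{c \in \F_{2^n} : \Tr(c) = 0}$, so for each $c \in \F_{2^n}$ the equation $y^2 + y = c$ has exactly $1 + (-1)^{\Tr(c)}$ solutions $y \in \F_{2^n}$.

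For the affine count I would first clear the pole at $x = 0$ by multiplying $y^2 + y = ax + x^{-1}$ through by $x$, obtaining the plane cubic $p(x,y) = xy^2 + xy + ax^2 + 1 = 0$ (the convention $\tfrac1 0 := 0$ plays no role here, since substituting $x = 0$ yields $1 = 0$, so no affine point lies over $x = 0$). For each $x \in \F_{2^n}^\ast$ the fibre over $x$ has $1 + (-1)^{\Tr(ax + x^{-1})}$ points by the Artin--Schreier count, so the number of affine points of $\mathcal{KL}^+_a$ is
\[
\sum_{x \in \F_{2^n}^\ast}\parens{1 + (-1)^{\Tr(ax + x^{-1})}} = (2^n - 1) + K_n^\ast(a).
\]
For $\mathcal{KL}^-_a$ one takes $\tau$ with $\Tr(\tau) = 1$ (a $\tau$ of trace $0$ merely reproduces $\mathcal{KL}^+_a$ after the substitution $y \mapsto y + s$ with $s^2 + s = \tau$), and the same computation gives $(2^n - 1) + (-1)^{\Tr(\tau)} K_n^\ast(a) = (2^n - 1) - K_n^\ast(a)$ affine points.

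Next I would homogenise $p$ to the ternary cubic $\widetilde p(X,Y,Z) = XY^2 + XYZ + aX^2Z + Z^3$ and examine $Z = 0$, where $\widetilde p = XY^2$; its zero locus in $\mathbb{P}^2(\F_{2^n})$ consists of exactly the two rational points $[1:0:0]$ and $[0:1:0]$. A Jacobian check (in characteristic $2$: $\widetilde p_X = Y^2 + YZ$, $\widetilde p_Y = XZ$, $\widetilde p_Z = XY + aX^2 + Z^2$) shows both are nonsingular points of $\widetilde C$, and since the coefficient of $XYZ$ is nonzero the curve is ordinary of genus $1$, consistent with \Cref{prop:ordinary-elliptic-isomorphic}. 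Adding these two points at infinity to the affine count yields precisely $2^n + 1 \pm K_n^\ast(a)$.

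The one place that needs care is the behaviour at infinity: the line $Z = 0$ is tangent to the cubic at $[1:0:0]$ (intersection multiplicity $2$ there, $1$ at $[0:1:0]$), so although the line meets the cubic in total degree $3$, only two \emph{distinct} rational points lie at infinity — miscounting this as three, or as a single point by false analogy with a short Weierstrass model, would break the formula. Checking the nonsingularity of these two points (and of the affine model) is what ensures we are counting points of the smooth elliptic curve rather than of a singular plane model; the remainder is the routine Artin--Schreier bookkeeping above.
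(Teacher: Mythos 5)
Your argument is correct. The paper does not prove this proposition at all --- it is quoted verbatim from Lachaud--Wolfmann --- so there is no internal proof to compare against; what you have written is a sound, self-contained verification of the cited result. The two ingredients are exactly right: the Artin--Schreier count $\#\set{y : y^2+y=c} = 1+(-1)^{\Tr(c)}$ gives the affine total $(2^n-1)\pm K_n^\ast(a)$ over the fibres $x\in\F_{2^n}^\ast$, and your Jacobian computation on $\widetilde p = XY^2+XYZ+aX^2Z+Z^3$ (where $\widetilde p_Z(1,0,0)=a\neq 0$ and $\widetilde p_X(0,1,0)=1$) confirms that the two points $[1:0:0]$, $[0:1:0]$ at infinity are smooth, so the plane cubic is the elliptic curve itself and the projective count $2^n+1\pm K_n^\ast(a)$ is the number of rational points in the sense the paper defines (solutions of the homogenised equation). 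You are also right to flag the tangency of $Z=0$ at $[1:0:0]$; counting the scheme-theoretic intersection $XY^2=0$ as three points is the one natural way to get this wrong. The only convention worth stating explicitly is that $\mathcal{KL}^-_a$ requires $\Tr(\tau)=1$ (as in Lachaud--Wolfmann), since $\Tr(\tau)=0$ reproduces $\mathcal{KL}^+_a$ up to the substitution $y\mapsto y+s$ --- which you note and handle correctly.
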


We will now prove the following theorem and then use Lemma~\ref{lem:MinimumDistance-By-MinimumExcludeMult} to obtain lower bounds on the distance of any other APN function to $F$.

\begin{theorem}\label{thm:inverse-minmult}
    Assume $n \geq 3$ is odd.
    Let $F \colon \F_{2^n} \to \F_{2^n}$ be the function $F(x) =x^{-1}$ where we define $F(0)=0$.
    Then for all $(a,b) \in \F_{2^n}^2 \setminus \graph F$, we have 
    \begin{equation}\label{eq:inverse-minmult}
    \mult_{\graph F}(a,b) \geq \ceil{\frac{2^n-5 - \max_{v \in \F_{2^n}^\ast} |K_n(v)-1|}{6}}.
    \end{equation}
\end{theorem}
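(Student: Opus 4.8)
The plan is to compute $3\,\mult_{\graph F}(a,b) = \sum_{x\in\F_{2^n}}\gamma_F(x+a,\,x^{-1}+b)$ via \Cref{prop:PiF-beta-excludemults}, and to translate the condition $\gamma_F(x+a,x^{-1}+b)=1$ — i.e. that $D_{x+a}F$ hits the value $x^{-1}+b$ — into the existence of a solution of a quadratic-type equation over $\F_{2^n}$, which we then count using the Kloosterman-curve machinery of Lachaud and Wolfmann. First I would reduce, via \Cref{prop:power-functions-uniform}, to a convenient representative in each row $\{a\}\times\F_{2^n}$; since $x\mapsto x^{-1}$ has inverse exponent $d$ with $d\equiv -1$, the uniformity relation lets us normalize (e.g. to $a=1$, or handle the rows $a=0$ and $a\neq 0$ separately). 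For $a=0$ and $b\neq 0$, \Cref{prop:APN-power-mults-0a-a0} already gives $\mult_{\graph F}(0,b)=\tfrac{2^{n-1}-1}{3}$, which is comfortably above the claimed bound, so the real work is the rows with $a\neq 0$.

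For $a\neq 0$, following \Cref{prop:mult-intersection-with-imD1F}, set $\Phi(x)=\dfrac{x^{-1}+b}{(x+a)^{-1}}=\dfrac{(x+a)(1+bx)}{x}$ on $\F_{2^n}\setminus\{0,a\}$, so that $3\,\mult_{\graph F}(a,b)=|\Phi^{-1}(\im D_1F)|$ up to the bookkeeping of the excluded points $x\in\{0,a\}$ and the value $1\notin\im\Phi$ already handled in the proofs above. Now $\im D_1F$ for $F(x)=x^{-1}$ is $\{\,s^{-1}+(s+1)^{-1}:s\in\F_{2^n}\,\}=\{\,(s^2+s)^{-1}:s\neq 0,1\,\}\cup\{0\}$, and membership $t\in\im D_1F$ is equivalent to $\Tr(1/t)=0$ together with $t\neq 0$ (this is the standard description of the image of the derivative of the inverse function). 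Thus $|\Phi^{-1}(\im D_1F)|$ is, up to a bounded correction, the number of $x\in\F_{2^n}\setminus\{0,a\}$ with $\Phi(x)\neq 0$ and $\Tr\!\bigl(1/\Phi(x)\bigr)=0$, i.e. with $\Tr\!\left(\dfrac{x}{(x+a)(1+bx)}\right)=0$. Writing this Trace-zero count as $\tfrac12\sum_x\bigl(1+(-1)^{\Tr(x/((x+a)(1+bx)))}\bigr)$ and clearing denominators by the substitution that turns $\Tr(\text{rational function})=0$ into a point count, the exponential sum $\sum_x (-1)^{\Tr(\cdots)}$ becomes (after an affine change of variable absorbing $a,b$) the number of affine points on a curve $y^2+y=\alpha u+u^{-1}+\text{const}$, which is exactly a Kloosterman curve $\mathcal{KL}^{\pm}_\alpha$ from \Cref{prop:ordinary-elliptic-isomorphic}. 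By \Cref{prop:LachaudWolfmann-rational-points} its number of rational points is $2^n+1\pm K_n^\ast(\alpha)$; subtracting the point(s) at infinity and relating $K_n^\ast(\alpha)=K_n(\alpha)-1$ gives the exponential sum as $\pm(K_n(\alpha)-1)$ plus an absolutely bounded error. Hence
\[
3\,\mult_{\graph F}(a,b) \;\geq\; \frac{2^n - 5 - \max_{\alpha\in\F_{2^n}^\ast}|K_n(\alpha)-1|}{2},
\]
and dividing by $3$ and taking ceilings (the multiplicity being an integer) yields \cref{eq:inverse-minmult}.

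The main obstacle is the bookkeeping in the middle step: correctly identifying $\im D_1F$ and keeping exact track of the finitely many boundary contributions — the excluded inputs $x\in\{0,a\}$, the input where the denominator $1+bx$ vanishes, the value $1\notin\im\Phi$ versus $1\in\im D_1F$, whether $0\in\im D_1F$ should be counted, and the point(s) at infinity of the homogenized curve (including the possibility that the curve is singular or reducible for special $(a,b)$, e.g. when $b=0$ or $b=a^{-1}$, where one should fall back on \Cref{prop:power-functions-uniform} or a direct count). These corrections are what determine the precise constant $-5$ in the numerator, so they must be done carefully; once the curve is recognized as $\mathcal{KL}^\pm_\alpha$, the rest is a direct application of \Cref{prop:LachaudWolfmann-rational-points} and the congruence $K_n^\ast(\alpha)\equiv 3\bmod 4$. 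A secondary point to verify is that the substitution reducing a general row $a\neq 0$ to a single Kloosterman parameter $\alpha$ is legitimate for all relevant $b$, so that the maximum over $\alpha\in\F_{2^n}^\ast$ genuinely dominates every exclude multiplicity.
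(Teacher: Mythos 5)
Your route is genuinely different from the paper's in its middle portion, though both arguments ultimately feed into the same Lachaud--Wolfmann input (\Cref{prop:ordinary-elliptic-isomorphic,prop:LachaudWolfmann-rational-points}). The paper counts $6\mult_{\graph F}(a,b)$ directly as the number of affine points of the plane cubic $x^{-1}+y^{-1}+(x+y+a)^{-1}=b$, clears denominators to a projective cubic, checks smoothness and ordinariness, and tracks the boundary terms explicitly via the chain $\#\mathcal{C}=|D|+3\delta_F(1,b)$, $\#\mathcal{E}=|G|+3$, $\#\widetilde{\mathcal{E}}=\#\mathcal{E}+3$, which is what produces the constant $-5$. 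You instead compute $3\mult_{\graph F}(a,b)$ as a fiber count $|\Phi^{-1}(\im D_1F)|$ and convert membership in $\im D_1F$ into a trace condition, so that the count becomes half the affine point count of the Artin--Schreier double cover $y^2+y=R(x)$ with $R(x)=x/((x+a)(1+bx))$. Since $R$ has exactly two distinct simple poles with nonzero residues whenever $a,b\neq 0$ and $b\neq a^{-1}$, a M\"obius change of variable does put this in Kloosterman form, so the strategy is viable; what it buys is avoiding the smoothness check on the plane cubic, at the cost of a second layer of boundary accounting (poles of $R$, the fiber over $t=1$, points at infinity of the double cover).

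There are, however, two concrete problems. First, your description of $\im D_1F$ is wrong: $D_1F(0)=D_1F(1)=1$ and $x^{-1}+(x+1)^{-1}=(x^2+x)^{-1}$ for $x\notin\F_2$, so $\im D_1F=\set{1}\cup\set{t\neq 0 : \Tr(t^{-1})=0}$; in particular $0\notin\im D_1F$ and, since $\Tr(1)=1$ for $n$ odd, the element $1$ is \emph{not} captured by the condition $\Tr(1/t)=0$. Your parenthetical that $1\notin\im\Phi$ also fails in general: $\Phi(x)=1$ reduces to $x^2+ax+a/b=0$, which has two solutions for about half of all $b$. So the fiber $\Phi^{-1}(1)$ genuinely contributes $0$ or $2$ to $3\mult_{\graph F}(a,b)$ and cannot be ignored. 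Second, and more decisively, you explicitly defer the entire boundary bookkeeping (the inputs $x\in\set{0,a,b^{-1}}$, the $t=1$ fiber, the points at infinity and over the poles of the double cover) while acknowledging that these corrections are exactly what determine the constant $-5$. Since the content of the theorem is that constant, the proposal as written establishes only the shape $6\mult_{\graph F}(a,b)\geq 2^n-O(1)-\max_a|K_n(a)-1|$, not the stated inequality; completing it requires carrying out an accounting of the same kind the paper does for its cubic model.
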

\begin{proof}
Let $(a,b) \notin \graph{F}.$ The exclude multiplicity of $(a,b)$ is $\frac{s}{6}$ where $s$ counts the number of solutions $(x,y,z) \in \F_{2^n}^3$ to
\[
\begin{cases}
    x+y+z=a\\
    x^{-1}+y^{-1}+z^{-1}=b,
\end{cases}
\]
Equivalently, $s$ is the number of roots $(x,y) \in$ $\F_{2^n}^2$ of the polynomial
\begin{equation}\label{eq:inverse-mult-eqn}
    p(x,y) = x^{-1} + y^{-1} + (x + y + a)^{-1}+ b.
\end{equation}
By Proposition~\ref{prop:APN-power-mults-0a-a0}, we know that if either $a$ or $b$ is zero, then $s=2^n-2$ since $F$ is APN and $n$ is odd. 
Moreover, as shown in Proposition~\ref{prop:power-functions-uniform}, the value of $s$ is independent of $a$ as $a$ ranges across $\F_{2^n}^\ast$.
So, without loss of generality, assume $a = 1$ and $b \notin \F_2$.
Define the affine plane curve $\mathcal C = \set{(x,y) \in \F_{2^n}^2 : p(x,y) =0},$ and let $D \subset \mathcal{C}$ be the subset where no denominator vanishes: 
\[D =\set{(x,y) \in \mathcal{C}: 0 \notin \set{x,y,x+y+1}}\]
We claim that $\#\mathcal{C} = |D| + 3\delta_F(1,b)$. 
Indeed, if $x = 0$, then $p(0,y) = y^{-1} + (y + 1)^{-1} + b = 0$ has $\delta_F(1,b)$ solutions. 
Similarly, the cases $y = 0$ and $x + y = 1$ each contribute $\delta_F(1,b)$ distinct solutions.
Hence, $\#\mathcal{C} = |D| + 3\delta_F(1,b)$.

Now we multiply eq.~(\ref{eq:inverse-mult-eqn}) by $xy(x + y + 1)$ to obtain the following polynomial in $\F_{2^n}[x,y]$:
\begin{equation}
    q(x,y) = bx^2y + bxy^2 + x^2 + y^2 + bxy + xy + x + y.
\end{equation}
Let $\mathcal{E} = \{(x,y) \in \F_{2^n}^2: q(x,y) = 0\}$, and let $G=\set{(x,y)\in \mathcal{E} : 0 \notin \set{x,y,x+y+1}}$.
Note that for all $(x,y) \in \F_{2^n}^2$ such that $x,y, x+y + 1 \neq 0$, we have $p(x,y) = 0$ if and only if $q(x,y) = 0.$ 
Hence $|D|=|G|$.

We claim that $\# \mathcal{E} = |G| + 3$.
To see this, let us consider a point $(x,y) \in \mathcal{E} \setminus G$.
If $x = 0,$ then $0 = q(0,y) = y^2 + y.$ Hence $y \in \F_2$. Similarly, if $y = 0,$ then $x\in \F_2$.
Finally, if $x + y = 1,$ then 
\[0 = bx^2(x + 1) + bx(x+1)^2 + bx(x+1) + x(x + 1) + x^2 + (x+1)^2 + x + (x+1) = x^2 + x.\]
This implies $x \in \F_2$, and similarly $y \in \F_2$.
Since $q(1,1) = b +1 \neq 0$, we have $(x,y) \in \F_2^2 \setminus \set{(1,1)}$.
Therefore $\# \mathcal{E} = |G| + 3$.
 
Now, we homogenize  $q(x,y)$ to obtain the homogeneous polynomial
\[
\tilde{q}(x,y,z) = bx^2y + bxy^2 + x^2z + y^2z + (b + 1)xyz + xz^2 + yz^2,
\]
Let $\widetilde{\mathcal{E}} = \{(x,y,z) : \tilde{q}(x,y,z) = 0\}$ denote the projective curve defined by $\tilde{q}.$
By considering the partial derivatives of $\tilde{q}$, it is easy to see that $\widetilde{\mathcal{E}}$ is smooth, and we know that $\mathcal{E}$ is elliptic because its genus is $g = \frac{(3-1)(3-2)}{2}=1$.
Moreover, $\mathcal{E}$ is ordinary because $xyz$ has a nonzero coefficient in $\tilde{q}$.
Note that the projective points of $\widetilde{\mathcal{E}}$ include three additional points, corresponding to solutions of the form $(x:y:0)$ that satisfy $\widetilde{q}(x,y,0) = bx^2y + bxy^2 = bxy(x + y) =0$. 
There are three such cases: $x = 0, y = 0$ and $x+y = 0,$ corresponding to the points $(1:0:0), (0:1:0), (1:1:0) \in \widetilde{\mathcal{E}}$ respectively. 
So $\# \widetilde{\mathcal{E}} = \# \mathcal{E} +3$.

Hence, $6 \mult_{\graph F}(a,b) = s = \# \mathcal{C} = |D| + 3 \delta_F(1,b) = |G| + 3\delta_F(1,b)=\#\mathcal{E} -3 +3\delta_F(1,b)$, implying $6\mult_{\graph F}(a,b) 
    = \#\widetilde{\mathcal{E}} - 6 + 3\delta_F(1,b)$.
By Proposition~\ref{prop:LachaudWolfmann-rational-points}, we have that $\# \widetilde{\mathcal{E}}$ is equal to $2^n +1 \pm K_n^\ast(u)$ for some $u \in \F_{2^n}^\ast$ as $\widetilde{\mathcal{E}}$ is an ordinary elliptic curve.
So $\# \widetilde{\mathcal{E}} \geq \min_{v \in \F_{2^n}^\ast} \parens{2^n+1 \pm K_n^\ast(v)} = 2^n + 1 - \max_{v \in \F_{2^n}^\ast} |K_n^\ast(v)|$.
Therefore,
\begin{align*}
     6\mult_{\graph F}(a,b) 
     &\geq 2^n -5 - \max_{v \in \F_{2^n}^\ast}|K_n^\ast(v)| + 3\delta_F(1,b) \\
     &\geq 2^n -5 - \max_{v \in \F_{2^n}^\ast}|K_n(v)-1|
\end{align*}
and eq.~(\ref{eq:inverse-minmult}) immediately follows.
\end{proof}

Theorem~\ref{thm:dist-to-Inverse} follows immediately.

\begin{proof}[Proof of Theorem~\ref{thm:dist-to-Inverse}]
    Using the same proof technique as the proof of Theorem~\ref{thm:plat-dist-bound}, we apply Lemma~\ref{lem:MinimumDistance-By-MinimumExcludeMult} to Theorem~\ref{thm:inverse-minmult}.
\end{proof}

We have obtained lower bounds on the exclude multiplicity of any point in $\F_{2^n}^2 \setminus \graph F$, with our lower bound being approximately $\frac{2^n-2^{\frac{n}{2}+1}}{6}$.
Moreover, we provide explicit computations of $e_{\min}(\graph F)$ for $3 \leq n \leq 15$ (with $n$ odd) compared to our lower bound in Table~\ref{table:inverse-mults}.
\begin{table}[ht!]
    \centering
    \begin{tabular}{c|c|c}
         $n$ & $e_{\min}(\graph F)$ 
         & $\ceil{\frac{1}{6} \parens{2^n-5 - \max_{v \in \F_{2^n}^\ast} |K_n(v)-1|}}$  \\
         \hline 
         3 &1 & 0\\
         5 & 3 & 3\\
         7 & 18 & 17 \\
         9 & 77 & 77 \\
         11 & 326  & 326 \\ 
         13 & 1335& 1335\\
         15 & 5401& 5401 
    \end{tabular}
    \caption{The minimum exclude multiplicities of points in $\F_{2^n}^2 \setminus \graph{F}$ with respect to $\graph F$, compared to the lower bound of Theorem~\ref{thm:inverse-minmult} for odd $3 \leq n \leq 15$.}
    \label{table:inverse-mults}
\end{table}

Note that in Table~\ref{table:inverse-mults}, our lower bound agrees with $e_{\min}(\graph F)$ for all odd $5 \leq n \leq 15$ except when $n =7$.
For $n=7$, this happens because, whenever $b \in \F_{2^7} \setminus \F_2$ is such that the associated elliptic curve $\mathcal{E}$ from the proof of Theorem~\ref{thm:inverse-minmult} has the minimal possible number of rational points, then we have $\delta_F(1,b)=2$.
In fact, this occurs for exactly $7$ elements $b \in \F_{2^7}$.

\section{On the existence of linear structures of $\gamma_F$}\label{sec:gammaF-linearstr}

For $\epsilon \in \F_2$, we say that an \textit{$\epsilon$-valued linear structure} of a Boolean function $f \colon \F_2^n \to \F_2$ is a nonzero point $a \in \F_2^n$ such that $D_a f(x)=f(x)+f(x+a)$ takes constant value $\epsilon$.
Since $\gamma_F$ has weight $\binom{2^n}{2}$ when $F$ is APN (see \cite{carletCharpinZinovievCodesBentDES}), it is not balanced, and therefore cannot have a $1$-valued linear structure.
However, not very much is known about whether or not $\gamma_F$ can have $0$-valued linear structures.

In \cite{GammaCarlet}, two classes of APN functions were given such that $\gamma_F$ does not admit nontrivial linear structures.
The first is when $F$ is an APN power function, and the second is when $F$ is such that there exists some $x \in \F_2^n$ with the property that the size of $\set{(a,b) \in (\F_2^n)^2 : D_a D_b F(x) =w}$ does not depend on $w \in \F_2^n \setminus \set{0}$.

We prove that if $F$ is a plateaued APN function with $n$ even, then $\gamma_F$ does not admit nontrivial linear structures, giving a third known class of APN functions for which $\gamma_F$ has no nontrivial linear structures.
Let us rephrase the problem of $\gamma_F$ having linear structures in terms of exclude multiplicity when $F$ is a plateaued APN function.
\begin{lemma}\label{lem:gammaF-nontrivial-linear-structure-iff-max-mult}
Assume $n\geq 4$ is even.
    Let $F \colon \F_2^n \to \F_2^n$ be a plateaued APN function.
    Then $\gamma_F$ has a nontrivial linear structure if and only if there exists $(a,b)\in (\F_2^n)^2 \setminus \graph F$ such that $\mult_{\graph F}(a,b)=\lfloor \frac{2^n}{3}\rfloor = \frac{2^n-1}{3}$.
    Equivalently, $\gamma_F$ has a nontrivial linear structure if and only if there are $2^n$ points in $(\F_2^n)^2 \setminus \graph F$ of the maximum possible exclude multiplicity with respect to $\graph F$.
\end{lemma}

\begin{proof}
    Let $(a,b) \in (\F_2^n)^2 \setminus \graph F$.
    By Proposition~\ref{prop:mult-autocorrelation}, we know that 
    \[
    \mult_{\graph F}(a,b)  = \frac{2^n-2}{6} + \frac{2^{2n}-2\wt(D_{(0,b+F(a))}\gamma_F)}{6 \cdot 2^n}
    \]
    Therefore $\mult_{\graph F}(a,b) = \frac{2^n-1}{3}$ if and only if $\wt(D_{(0,b+F(a))}\gamma_F) = 0$.
\end{proof}

The following result is then an immediate corollary of the fact that plateaued APN functions cannot have an algebraic degree equal to $n$, see \cite{budaghyanCarletHellesetUpperBoundsDegree}.

\begin{proposition}\label{prop:plateaued-nolinearstr}
    Assume $n \geq 4$ is even.
    Let $F \colon \F_2^n \to \F_2^n$ be a plateaued APN function.
    Then $\gamma_F$ has no nontrivial linear structures.
\end{proposition}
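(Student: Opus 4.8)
The plan is to read this off from the two ingredients already assembled in the excerpt: Proposition \ref{prop:gammaF-nontrivial-linear-structure-iff-max-mult}, which (since every plateaued $3$-to-$1$ function is APN) says that $\gamma_F$ has a nontrivial linear structure if and only if $\graph F$ has an exclude point of the maximum possible multiplicity $\frac{2^n-1}{3}$, and Theorem \ref{thm:plateaued-3to1-mults}, which says that every point of $(\F_2^n)^2 \setminus \graph F$ has exclude multiplicity exactly $\alpha(n)$ or $\beta(n)$. So the whole proof reduces to checking that neither $\alpha(n)$ nor $\beta(n)$ equals $\frac{2^n-1}{3}$ when $n \geq 4$ is even; in fact it suffices to show both are strictly smaller.

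The computation I would carry out is to rewrite $\frac{2^n-1}{3} = \frac{2^{n+1}-2}{6}$ so that it lines up with the formulas $\alpha(n) = \frac{2^n + (-2)^{n/2+1} - 2}{6}$ and $\beta(n) = \frac{2^n + (-2)^{n/2} - 2}{6}$. Then $\alpha(n) < \frac{2^n-1}{3}$ is equivalent to $(-2)^{n/2+1} < 2^n$, and $\beta(n) < \frac{2^n-1}{3}$ is equivalent to $(-2)^{n/2} < 2^n$. Since $|(-2)^{n/2+1}| = 2^{n/2+1} \leq 2^n$ with equality only when $n = 2$, and $|(-2)^{n/2}| = 2^{n/2} < 2^n$ for all $n \geq 2$, the hypothesis $n \geq 4$ forces both strict inequalities regardless of the sign of $(-2)^{n/2+1}$ (which depends on $n \bmod 4$, but only the absolute value is needed since we want an upper bound). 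Hence no exclude point of $\graph F$ attains multiplicity $\frac{2^n-1}{3}$, and Proposition \ref{prop:gammaF-nontrivial-linear-structure-iff-max-mult} gives the claim.

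There is essentially no obstacle here: all the substantive work — the uniformity result for plateaued APN functions, the partial-difference-set structure of $\im F$, the exact multiplicities $\alpha(n)$ and $\beta(n)$, and the equivalence between linear structures of $\gamma_F$ and maximal exclude multiplicity — is already in place. The only point deserving a moment's care is the sign of $(-2)^{n/2+1}$, which is handled by passing to absolute values; the argument is then a one-line inequality.
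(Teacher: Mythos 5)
Your proposal is correct and follows exactly the paper's argument: combine Theorem \ref{thm:plateaued-3to1-mults} with Proposition \ref{prop:gammaF-nontrivial-linear-structure-iff-max-mult} and check that $\alpha(n),\beta(n) < \frac{2^n-1}{3}$ for even $n \geq 4$. You merely spell out the inequality (correctly) where the paper asserts it without computation.
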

\begin{proof}
     Assume that $F$ is plateaued, and by way of contradiction assume that $(0,F(0)+b)$ has an exclude multiplicity of $\frac{2^n-1}{3}$ for some $b \neq 0$.
    Then $\F_2^n\setminus \{0\}$ decomposes into $\frac{2^n-1}{3}$ disjoint triples $\{x,y,z\}$ with  
    \[
    (x + y + z, F(x) + F(y)+F(z)) = (0, b+F(0)).
    \]
    Summing over all triples gives $\sum_{x \in \F_2^n \setminus \set{0}} (x, F(x)) = (0,b + F(0)).$
    Moreover, since $F$ has algebraic degree strictly less than $n$, we know $\sum_{x \in \F_2^n} F(x) = 0$, implying $\sum_{x \in \F_2^n \setminus \set{0}} (x,F(x)) = (0,F(0))$.
    Therefore, $b = 0$, a contradiction.
    Thus, no exclude point of $\graph F$ can have multiplicity equal to $\frac{2^n-1}{3}$ by Proposition~\ref{prop:plateaued-uniformity}, and the result follows from Lemma~\ref{lem:gammaF-nontrivial-linear-structure-iff-max-mult}.
\end{proof}

As shown above, there is an equivalence of $\gamma_F$ having a nontrivial linear structure and $\graph F$ having an exclude point of multiplicity $\frac{2^n-1}{3}$ when $F$ is plateaued APN, and we showed that this is an impossibility.
In the case of $F$ also being quadratic APN, we can then demonstrate that the ortho-derivative of $F$ cannot have trivial component functions.
\begin{corollary}\label{cor:quadratic-linearstructure-orthoderivative}
    Suppose $n \geq 4$ is even, and let $F \colon \F_2^n \to \F_2^n$ be a quadratic APN function.
   Then $\pi_F$ has no components of weight $0$, or equivalently, the image of $\pi_F$ is not contained in a linear hyperplane.
\end{corollary}
\begin{proof}
    Apply Lemma~\ref{lem:gammaF-nontrivial-linear-structure-iff-max-mult} and Proposition~\ref{prop:plateaued-nolinearstr} to eq.~(\ref{eq:mult-orthoderiv-components}).
\end{proof}
We now apply the inequality $\wt(b \cdot \pi_F) \geq \NL(\pi_F)$ for $b \neq 0$ to eq.~(\ref{eq:mult-orthoderiv-components}).
In particular, we have
\begin{equation}\label{eq:mult-nonlin-inequality}
    \mult_{\graph F}(a,b) \leq \frac{2^n-1-\NL(\pi_F)}{3}
\end{equation}
when $F$ is quadratic APN and $(a,b) \notin \graph F$.
By combining eq.~(\ref{eq:mult-nonlin-inequality}) and Proposition~\ref{prop:mult-autocorrelation}, we have that for all even $n$ and any quadratic APN function $F \colon \F_2^n \to \F_2^n$ with $F(0)=0$, and $b \neq 0$,
\[
\Delta_{\gamma_F}(0, b) \leq 2^{2n}-2^{n+1} \NL(\pi_F),
\]
or equivalently,
\[
\wt(D_{(0, b)} \gamma_F) \geq 2^n \NL(\pi_F).
\]
Also, by Theorem~\ref{thm:plateaued-3to1-mults}, one can easily deduce the exact weights of $D_{(0,b)} \gamma_F$ for all $b\neq 0$ when $F$ is a plateaued $3$-to-$1$ function.
Moreover, we have the following inequalities on the weights of the derivatives of $\gamma_F$ of the form $D_{(0,b)}\gamma_F$.

\begin{proposition}
    Suppose $n\geq 4$ is even. 
    Let $F \colon \F_2^n \to \F_2^n$ be a plateaued APN function with $F(0)=0$.
    Then for any $b \neq 0$, the weight of $D_{(0,b)}\gamma_F$ is divisible by $6 \cdot 2^n$ and satisfies
    \[
        0 \leq \wt(D_{(0,b)}\gamma_F) \leq 2^n(2^n - 3 \cdot 2^{\frac{n}{2}-1}+2).
    \]
\end{proposition}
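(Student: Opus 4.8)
The plan is to reduce the statement to the single identity
\[
\wt(D_{(0,b)}\gamma_F) = 2^n \, \wt(b \cdot \pi_F) \qquad \text{for every } b \in \F_2^n \setminus \set{0},
\]
after which both the divisibility and the two-sided bound follow immediately from facts already recorded for the ortho-derivative $\pi_F$ in \Cref{subsec:quadratic-functions}.

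To establish this identity, I would specialize \Cref{prop:mult-autocorrelation} to $a = 0$: since $F(0)=0$, the point $(0,b)$ lies outside $\graph F$ for every nonzero $b$, and the proposition gives
\[
\mult_{\graph F}(0,b) = \frac{2^n-2}{6} + \frac{\Delta_{\gamma_F}(0,b)}{6 \cdot 2^n}.
\]
Combining this with \cref{eq:mult-orthoderiv-components}, namely $\mult_{\graph F}(0,b) = \tfrac13\bigl(2^n-1-\wt(b\cdot\pi_F)\bigr)$, and solving for the autocorrelation yields $\Delta_{\gamma_F}(0,b) = 2^{2n} - 2^{n+1}\wt(b\cdot\pi_F)$. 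Since $\Delta_{\gamma_F}(0,b) = 2^{2n} - 2\wt(D_{(0,b)}\gamma_F)$, the claimed identity drops out. (Alternatively one may combine \cref{eq:WF4-quadratic-autocorrelation-gamma} directly with the ortho-derivative expression for $\tfrac{1}{2^{2n}}\widehat{W_F^4}(0,b)$; both computations are routine.)

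With the identity in hand, divisibility by $6 \cdot 2^n$ amounts to showing $6 \mid \wt(b\cdot\pi_F)$. For the factor $3$, I would use that $n$ is even, so $2^n \equiv 1 \pmod 3$, together with $3 \mid 2^n - 1 - \wt(b\cdot\pi_F)$ (this is \cref{eq:mult-orthoderiv-components} again, together with the remark following it); hence $3 \mid \wt(b\cdot\pi_F)$. For the factor $2$, I would use that $\pi_F$ has algebraic degree at most $n-2$, so $\sum_{x \in \F_2^n}\pi_F(x)=0$ and therefore $\wt(b\cdot\pi_F) \equiv b\cdot\sum_{x}\pi_F(x) \equiv 0 \pmod 2$, exactly as in the proof of \Cref{prop:quadratic-odd-mults}. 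Since $\gcd(2,3)=1$, this gives $6 \mid \wt(b\cdot\pi_F)$, hence $6\cdot 2^n \mid \wt(D_{(0,b)}\gamma_F)$.

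Finally, $\wt(b\cdot\pi_F) \geq 0$ gives the lower bound, and multiplying the earlier upper bound $\wt(b\cdot\pi_F) \leq 2^n + 2 - 3\cdot 2^{\frac n2-1}$ (the proposition in \Cref{subsec:quadratic-functions} bounding the component weights of $\pi_F$) through by $2^n$ gives $\wt(D_{(0,b)}\gamma_F) \leq 2^n\bigl(2^n - 3\cdot 2^{\frac n2-1}+2\bigr)$, as desired. There is no genuine obstacle in this argument; the only places calling for a little care are assembling the separate $\bmod 2$ and $\bmod 3$ divisibility facts into divisibility by $6$, and bookkeeping the powers of $2$ when passing between $\Delta_{\gamma_F}(0,b)$ and $\wt(D_{(0,b)}\gamma_F)$.
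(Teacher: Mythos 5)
Your proposal is correct and follows essentially the same route as the paper: both arguments combine \Cref{prop:mult-autocorrelation} with the ortho-derivative relation \cref{eq:mult-orthoderiv-components} and the bounds from \Cref{thm:plat-exclude-bound}, the only difference being that you parameterize everything by $\wt(b\cdot\pi_F)$ via the clean identity $\wt(D_{(0,b)}\gamma_F)=2^n\,\wt(b\cdot\pi_F)$, while the paper works with the odd exclude multiplicity $\mult_{\graph F}(0,b)=2k+1$ and obtains $\wt(D_{(0,b)}\gamma_F)=2^n(2^n-6k-4)$. The two formulations are affinely equivalent, and your separate $\bmod\ 2$ and $\bmod\ 3$ divisibility facts are exactly the ingredients the paper uses (the former inside \Cref{prop:quadratic-odd-mults}, the latter via $6\mid 2^n-4$).
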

\begin{proof}
    By Proposition~\ref{prop:plateaued-odd-mults}, we know $\mult_{\graph F}(0,b) = 2k+1$ for some non-negative integer $k \geq 0$.
    Therefore, by Proposition~\ref{prop:mult-autocorrelation}, we have $6 \cdot 2^n(2k+1) = 2^n(2^n-2) + 2^{2n}-2\wt(D_{(0,b)}\gamma_F)$.
    So $\wt(D_{(0,b)}\gamma_F) = 2^n(2^n - 6k-4)$.
   Since $n$ is even, $2^n-4$ is divisible by $6$, and so we have $\wt(D_{(0,b)}\gamma_F) \equiv 0 \mod 6 \cdot 2^n$.

    By Theorem~\ref{thm:plat-exclude-bound}, we have  $\mult_{\graph F}(a,b) \geq 2^{\frac{n}{2}-1}-1$ for all $(a,b) \notin \graph F$.
    Moreover, we have $\mult_{\graph F}(a,b) \leq \frac{2^n-1}{3}$ by \cite[Corollary 3.4]{quadspaper}.
    So, for $b \neq 0$, we have $2^{\frac{n}{2}-1}-1 \leq \mult_{\graph F}(0,b) \leq \frac{2^n-1}{3}$, and the result follows.
\end{proof}

\section{Open problems}\label{sec:open-ques}

To conclude the paper, we list a few open problems relating to the exclude multiplicities of the graphs of APN functions.

\begin{openp}
    Establish a lower bound on the exclude multiplicities of the graph of the Dobbertin function or all APN power functions in general.
\end{openp}

\begin{openp}\label{openprob2}
    Establish a better lower bound on the exclude multiplicities of the graph of a plateaued (or quadratic) APN function or find an infinite family that tightly attains our bound from Theorem~\ref{thm:plat-exclude-bound}.
\end{openp}

\begin{openp}
    Study the nonlinearity of the ortho-derivative $\pi_F$ of a quadratic APN function $F$.
    Is it possible for $\pi_F$ to have zero nonlinearity? 
\end{openp}

\begin{openp}
    For $n$ even, find an example of a non-plateaued APN function $F \colon \F_2^n\to \F_2^n$ whose graph has an exclude point of even multiplicity, or prove such a function does not exist. 
\end{openp}

Currently, it is unclear whether any of the known lower bounds on the distances between APN functions can be attained by pairs of APN functions via existing constructions or secondary constructions (see for instance, \cite{Budaghyan009150} or \cite{ArshadChanging}).
The following open problem seems like an interesting area of future research.

\begin{openp}
    In the same spirit as Open Problem~\ref{openprob2}, it would be interesting to determine whether or not any of the lower bounds in Theorems~\ref{thm:plat-dist-bound}, \ref{thm:distance-to-plat3to1}, or ~\ref{thm:dist-to-Inverse} are tight.
    In particular, more research on the exact distances between pairs of known APN functions (including those arising from secondary constructions) would be valuable.
\end{openp}

\section*{Declarations}

\begin{description}
    \item[Ethics approval and consent to participate:] Not applicable. 
    \item[Consent for publication:] Not applicable. 
    \item[Availability of data and materials:] This study does not generate or analyze datasets.
    \item[Competing interests:] The authors declare they have no competing interests.
    \item[Funding:] This research received no external funding.
    \item[Authors' contributions:] All authors contributed equally to this work.
    \item[Acknowledgments:] We thank Claude Carlet and Larry Rolen for useful discussions and helpful feedback that improved earlier drafts of this paper. We also thank the anonymous reviewers who helped improve previous versions of this paper.
\end{description}

\printbibliography

\end{document}